\documentclass[11pt]{amsart}
\usepackage{amsmath,amsthm}
\usepackage {latexsym}
\usepackage{amssymb}

\usepackage{xcolor}
\definecolor{darkgreen}{rgb}{0,0.7,0}
\usepackage{todonotes}

\everymath{\displaystyle}

\setlength{\topmargin}{-.5in}
\setlength{\textheight}{9in}
\setlength{\oddsidemargin}{.125in}
\setlength{\evensidemargin}{.125in}

\usepackage[utf8]{inputenc}
\setlength{\textwidth}{6.25in}

\usepackage{xspace}
\usepackage[colorlinks=true]{hyperref}

\newcommand{\Sph}{\mathbb{S}}

\newcommand{\R}{\mathbb{R}}

\newtheorem{thm}{THEOREM}[section]
\newtheorem{remark}[thm]{REMARK}
\newtheorem{lem}[thm]{LEMMA}
\newtheorem{defn}[thm]{DEFINITION}
\newtheorem{prop}[thm]{PROPOSITION}
\newtheorem{cor}[thm]{COROLLARY}

\date{\today}

\begin{document}
\title[Global control of the heat flow]{Global controllability to  harmonic maps of the heat flow from a circle to a sphere}

\author{Jean-Michel Coron}
\address{Sorbonne Universit\'{e}, Universit\'{e} Paris-Diderot SPC, CNRS, INRIA, Laboratoire Jacques-Louis Lions, LJLL,  \'{e}quipe CAGE, F-75005 Paris, France}
\email{\texttt{jean-michel.coron@sorbonne-universite.fr}}
\thanks{}

\author{Shengquan Xiang}
\address{School of Mathematical Sciences, Peking University, 100871, Beijing, P. R. China}
\email{\texttt{shengquan.xiang@math.pku.edu.cn}}
\thanks{}

\begin{abstract}
In this paper, we study the global controllability and stabilization problems of the harmonic map heat flow from a circle to a sphere. Combining ideas from control theory, heat flow, differential geometry, and asymptotic analysis, we  obtain several  important properties,  such as small-time local controllability, local quantitative rapid stabilization, obstruction to semi-global asymptotic stabilization, and global controllability to geodesics.  Surprisingly, due to the geometric feature of the equation we also discover the small-time global controllability between harmonic maps within the same homotopy class for general compact Riemannian manifold targets, which is to be compared with the analogous but longstanding problem for the nonlinear heat equations.
\end{abstract}
\subjclass[2010]{35K58,   35B40, 93C20}
\thanks{\textit{Keywords.} harmonic map heat flow, global controllability, quantitative rapid stabilization, degree theory.}
\maketitle

\setcounter{tocdepth}{2}
\tableofcontents

\section{Introduction}

 Harmonic maps are maps between Riemannian manifolds which are critical points of a certain energy functional, the Dirichlet energy. They have applications in various fields of mathematics and physics. Let us be given a domain $\Omega\subset \R^l$ and a compact Riemannian manifold $(\mathcal{N}, g)$.  Using the  Nash isometric embedding theorem, we may assume that $(\mathcal{N}, g)$ is a Riemannian submanifold of some $\mathbb{R}^n$ equipped with the Euclidean metric.  The harmonic map heat flow is a process that starts with an initial map and deforms it over time in such a way that it tends to decrease the energy of the map. It is governed by the following geometric heat-like equation
 \begin{equation*}
 u \in \mathcal{N} \text{ and }    \partial_t u- \Delta u\perp T_{u} \mathcal{N}.
 \end{equation*}
 This geometric equation finds extensive applications across a wide spectrum of disciplines, including physics, fluid dynamics, materials sciences, and even computer vision.  For example, it appears in the study of solutions to the Einstein field equations, and it plays a significant role in the understanding of soap films, elasticity, and various other phenomena that connect to minimal surfaces. Concerning fluid dynamics especially to the liquid crystals,  this equation is in principle investigated in the following special cases, such as the harmonic mappings from a three-dimensional domain to a sphere \cite{LESLIE19791}, the Oseen-Frank equation \cite{MR1369095}, and the Ericksen model \cite{MR3621817, Lin-Wang-2016}.  Harmonic maps and their related flows  have also been applied to image analysis, especially in tasks like image denoising, segmentation, and shape matching.

The control of the harmonic maps equation naturally finds applications to the preceding illustrated topics. In a physical example involving liquid crystals, the concept of harmonic maps comes into play through the modeling of the director field. The director field represents the preferred molecular orientation in a given region of a liquid crystal. Understanding how this field evolves and interacts with external influences (such as electric or magnetic fields) is critical to the design and optimization of liquid crystal-based devices, and these fields interact with the director field in a PDEs control background that can quantitatively or qualitatively describe the deformation and response.
Nevertheless, the rigorous mathematical investigation of the controllability of this geometric equation remains quite limited \cite{MR3621817,  Liu-2018}. For example, in \cite[Theorem 1]{Liu-2018} Liu has studied the global controllability of the harmonic map heat flow from $\Omega\subset \mathbb{R}^3$ to $\mathbb{S}^2$, but his control is not localized due to the use of a strong external magnetic field.  We also refer to the recent
study by the authors on the control of wave map equations \cite{Coron-Krieger-Xiang-1, Krieger-Xiang-2022}.
This  is, of course, related to the well-understood topic of control of the heat
equations, see for instance the one-dimensional results  \cite{2016-Coron-Nguyen-ARMA,  GHXZ-2022, 1971-Fattorini-Russell-ARMA,  1995-Guo-Littman, MR4153111, 1977-Jones-Frank-JMAA,  1978-Littman-ASNSP,  2014-Martin-Rosier-Rouchon-A,  2016-Martin-Rosier-Rouchon-SICON} and the multi-dimensional results  \cite{DZZ-2008, EZ-2011, MR1791879, MR1406566, Lebeau-Robbiano-CPDE}. But to our limited knowledge, there is no result in the literature on the study
of the global controllability\footnote{Using the standard terminology of
control theory of PDEs, by \textit{global} we mean that the scale of states can be
as large as we want. This differs from the widely used definition of ``global" in the well-posedness problem for
long-time behavior.  } of harmonic map heat flow with localized controls.    As the first paper
on the study of global control of harmonic map heat flow with localized control, we restrict
ourselves to the simplest case: the initial domain is the periodic
domain $\mathbb{T}^1:=\R/2\pi \mathbb{Z}$ while the target manifold is the  Euclidean sphere $\Sph^k:=\{u\in \R^{k+1}:\, |u|=1\}$ with $k\in \mathbb{N}\setminus\{0\}$.
Then the harmonic map heat flow takes the form
\begin{equation}\label{eq:freehmhf}
    \partial_t u- \Delta u= |\partial_x u|^2 u. \notag
\end{equation}
In this paper, we are interested in the control related properties of the controlled harmonic map heat flow equation.  For any given force term $f: \mathbb{R}^+\times \mathbb{T}^1
\rightarrow  \mathbb{R}^{k+1}$ the controlled harmonic map heat flow is written as
\begin{equation}\label{eq:controlhmhf}
    \partial_t u- \Delta u= |\partial_x u|^2 u+ \mathbf{1}_{\omega} f^{u^{\perp}},
\end{equation}
where the notation $f^{u^{\perp}}$ represents the projection of $f$ onto the tangent space $T_u \Sph^k\subset \R^{k+1}$ and $\mathbf{1}_{\omega}$ is the characteristic function of $\omega$:
\begin{equation*}
\mathbf{1}_{\omega}(x)=1 \text{ if $x\in \omega$ and $\mathbf{1}_{\omega}(x)=0$ if $x\in \mathbb{T}^1\setminus \omega$. }
\end{equation*}

\begin{figure}[t]
\centering
\includegraphics[width=0.95\linewidth, trim={0cm 0.0cm 0cm 0.0cm},clip]{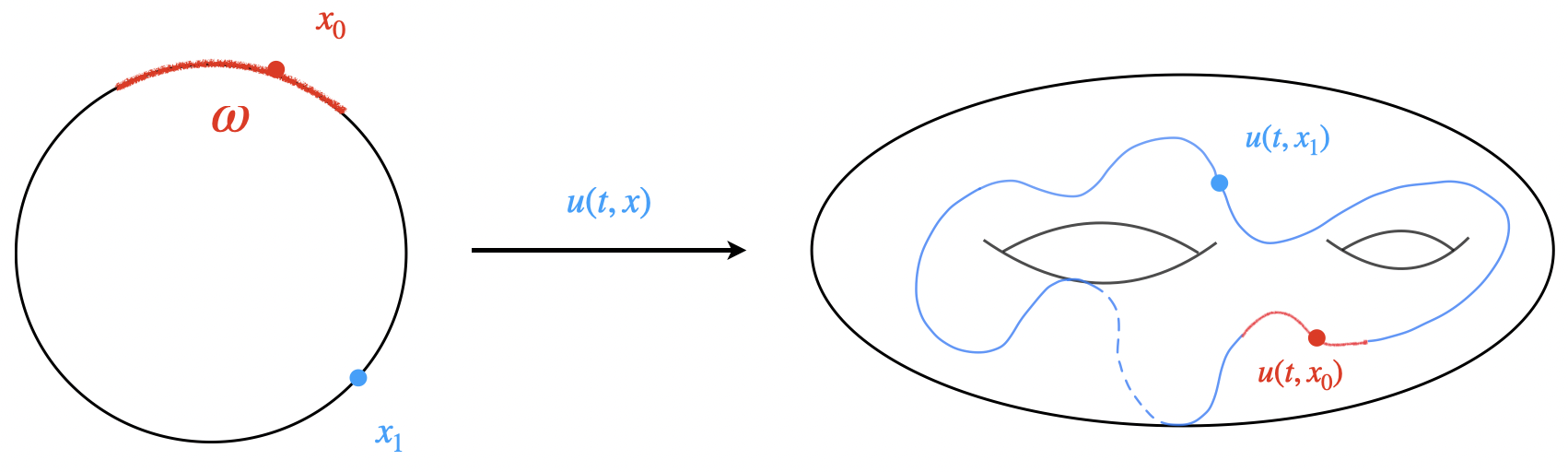}
\caption{The controlled harmonic map heat flow from $ \mathbb{T}^1$ to  a torus of genus 2.  The blue curve is the solution at a given time $t$, and the red part is the place where we are allowed to add extra control force.}
\label{pic1}
\end{figure}

One easily verifies that the solutions (if they exist) remain on the sphere $\mathbb{S}^k$ if the initial datum takes its values in $\mathbb{S}^k$.
As usual, in this paper, we consider localized control problems, which means that the force term $f$ can only act on some given open, nonempty, subdomain $\omega \subset \mathbb{T}^1$ (which can be very small). Physically, this means that our extra force only acts on a localized part of $\mathbb{T}^1$.  By choosing well the force term $f$ we are able to change the deformation of the solution, and one may wonder if, starting from a given initial state, one can then reach a given state. This is called the global controllability problem.   If the force term $f$ is chosen in the form of a feedback law that depends on the current state, for example $f(t, x):= u_x(t, x)$,  we call the system a closed-loop system. If, in addition, the stability of this new system becomes stronger than the original one, we call this process of selecting $f$ and enhancing the stability of the system {\it stabilization}.  This paper is devoted to the study of global controllability and stabilization of the controlled harmonic map heat flow equation.\\
Throughout this paper we define and consider the energy
\begin{equation}
    E(v(t, \cdot)):= \int_{\mathbb{T}^1} |\partial_x v(t, x)|^2 dx.
\end{equation}
When there is no risk of confusion, we simply denote the preceding energy by $E(v(t))$ or $E(t)$.
We also define the following energy level set.
\begin{defn}[Energy level set]
   Let $e>0$.  Let us denote by $\textbf{H}(e)$ the set of  states
    \begin{equation}
        \textbf{H}(e):= \{v:\mathbb{T}^1\rightarrow \R^{k+1}: v(x)\in \mathbb{S}^k\; \forall x\in \mathbb{T}^1, \; E(v(x))\leq e \}. \notag
    \end{equation}
\end{defn}

\begin{figure}[t]
\centering
\includegraphics[width=0.6\linewidth, trim={0cm 0.0cm 0cm 0.0cm},clip]{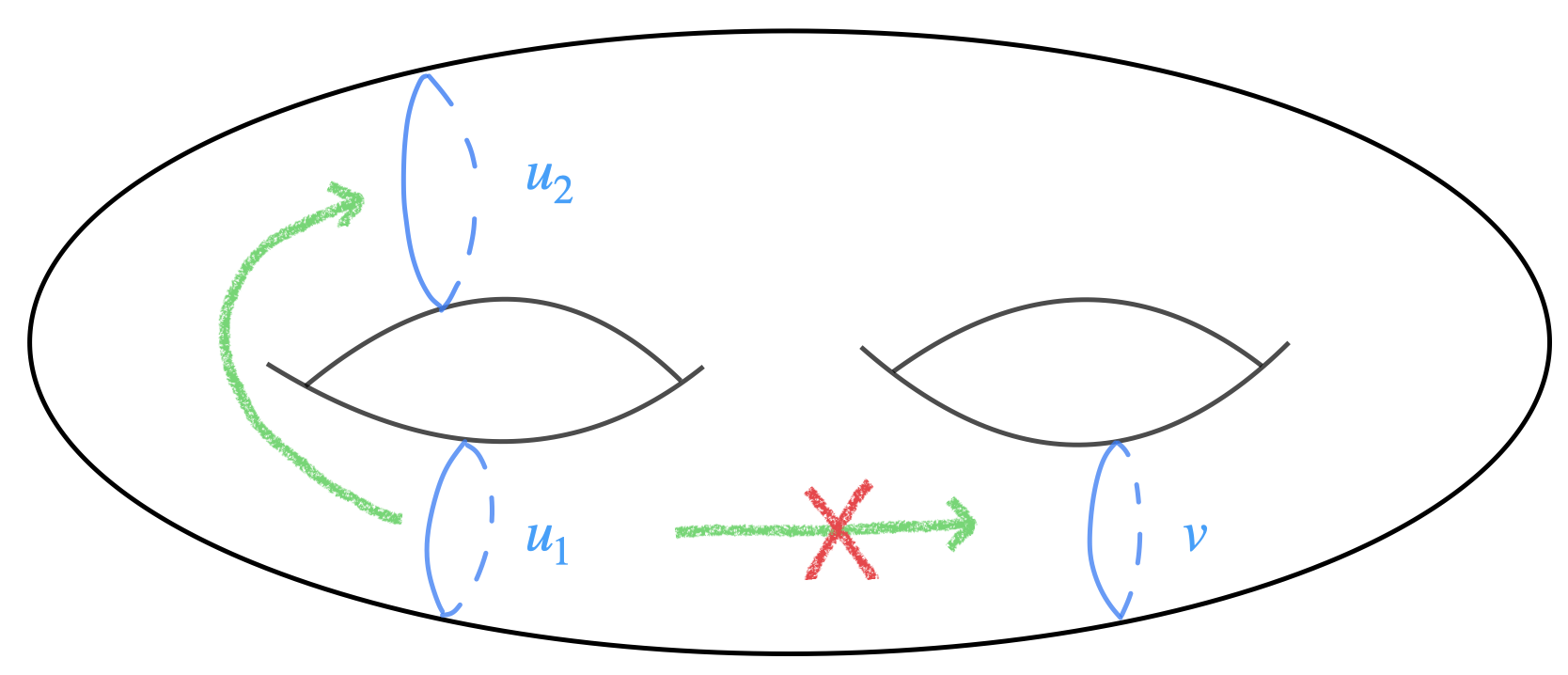}
\caption{Deformation of the curve in the same homotopy class. In this picture, the solution remains in the torus of genus 2. We observe that the state $u_1$ can be continuously deformed to the state $u_2$, but it cannot reach the state $v$. The {\it homotopy problem} states that every state is homotopic to a harmonic map. See Proposition   \ref{thm:homopotyconver} and the paper \cite{Ottarsson} for details.
}
\label{pictorus}
\end{figure}

\subsection{The main results}

\subsubsection{\textbf{Stability of the harmonic map heat flow}}
We start with the free harmonic map heat flow, namely no extra force is applied to the system:
\begin{equation}\label{eq:freehmhfs}
    \begin{cases}
    \partial_t u- \Delta u= |\partial_x u|^2 u, \\
    u(0, \cdot)= u_0(\cdot).
    \end{cases}
\end{equation}
One of the central problems in the study of the harmonic map heat flow is the so-called {\it homotopy problem}. Recall the following concepts about {\it approximate harmonic maps}.
\begin{defn}
Let $0<\varepsilon<1$. We call ``$\varepsilon$-approximate harmonic maps" the class of states $u: \mathbb{T}^1\rightarrow \Sph^k$ that belong to the set
\begin{equation}
\mathcal{Q}_{\varepsilon}:= \bigcup\limits_{\Phi: \textrm{ a harmonic map }} \left\{u: \mathbb{T}^1\rightarrow \Sph^k : \|u- \Phi\|_{H^1}\leq \varepsilon\right\}.
\end{equation}
\end{defn}
The  crucial  homotopy problem  asks about whether a given map $\phi_0: \mathcal{M}\rightarrow \mathcal{N}$ between two Riemannian manifolds can be deformed into a harmonic map   $\bar \phi: \mathcal{M}\rightarrow \mathcal{N}$.   One
 of the important methods to this classical problem is the harmonic map heat flow method:  for any given initial state $\phi_0$, one investigates the flow of the state and proves that the flow converges to a harmonic map.
\\

For instance, in our framework  one easily observes that when the initial energy is smaller than $1/2\pi$ the heat flow converges to a constant state.  Indeed, by performing the naive energy estimates for the free harmonic map heat flow one  observes that
\begin{equation}\label{ine:enedir}
    \frac{1}{2}\frac{d}{dt} \int_{\mathbb{T}^1} |u_x|^2(t, x) dx= -\int_{\mathbb{T}^1} |u_t|^2(t, x)dx= \int_{\mathbb{T}^1} (-|u_{xx}|^2+ |u_x|^4)(t, x) dx.
\end{equation}
The first part of the preceding equality also implies the global energy dissipation: $E(u(t))$ decreases with respect to time.
This equation, to be combined with a Sobolev interpolation inequality,
\begin{align*}
    - \|u_{xx}\|_{L^2}^2+ \|u_x\|_{L^4}^4 &\leq - \|u_{xx}\|_{L^2}^2+ \|u_x\|_{L^2}^2 \|u_x\|_{L^{\infty}}^2 \\
    &\leq \|u_{x}\|_{L^{\infty}}^2\left( - \frac{1}{\pi} + \|u_x\|_{L^2}^2\right) \\
     &\leq   -\frac{1}{2\pi} \|u_{x}\|_{L^{\infty}}^2 \;  \textrm{ provided that }   \|u_x\|_{L^2}^2\leq \frac{1}{2\pi},
\end{align*}
implies local exponential energy decay
\begin{equation}\label{eq:localexdecay}
    \frac{d}{dt} E(u(t))\leq - \frac{1}{2\pi^2} E(u(t))\; \forall t>0,
\end{equation}
provided that $E(u(0))\leq 1/2\pi$. One may ask if such a direct approach leads to global asymptotic dissipation of the energy.
However, the above simple analysis is based on a Sobolev interpolation inequality and the Poincar\'{e} inequality,  it is not clear that  there is uniform exponential stability for initial states belonging to $\mathbf{H}(2\pi-\varepsilon)$ for any $\varepsilon>0$. Besides, since harmonic maps are stationary states of the system, for an initial state with energy larger than $2\pi$ it is possible that the evolution of the system converges to some non-constant harmonic maps. The global asymptotic behavior of the system becomes more delicate.
\begin{figure}[t]
\centering
\includegraphics[width=0.7\linewidth, trim={0cm 0.0cm 0cm 0.0cm},clip]{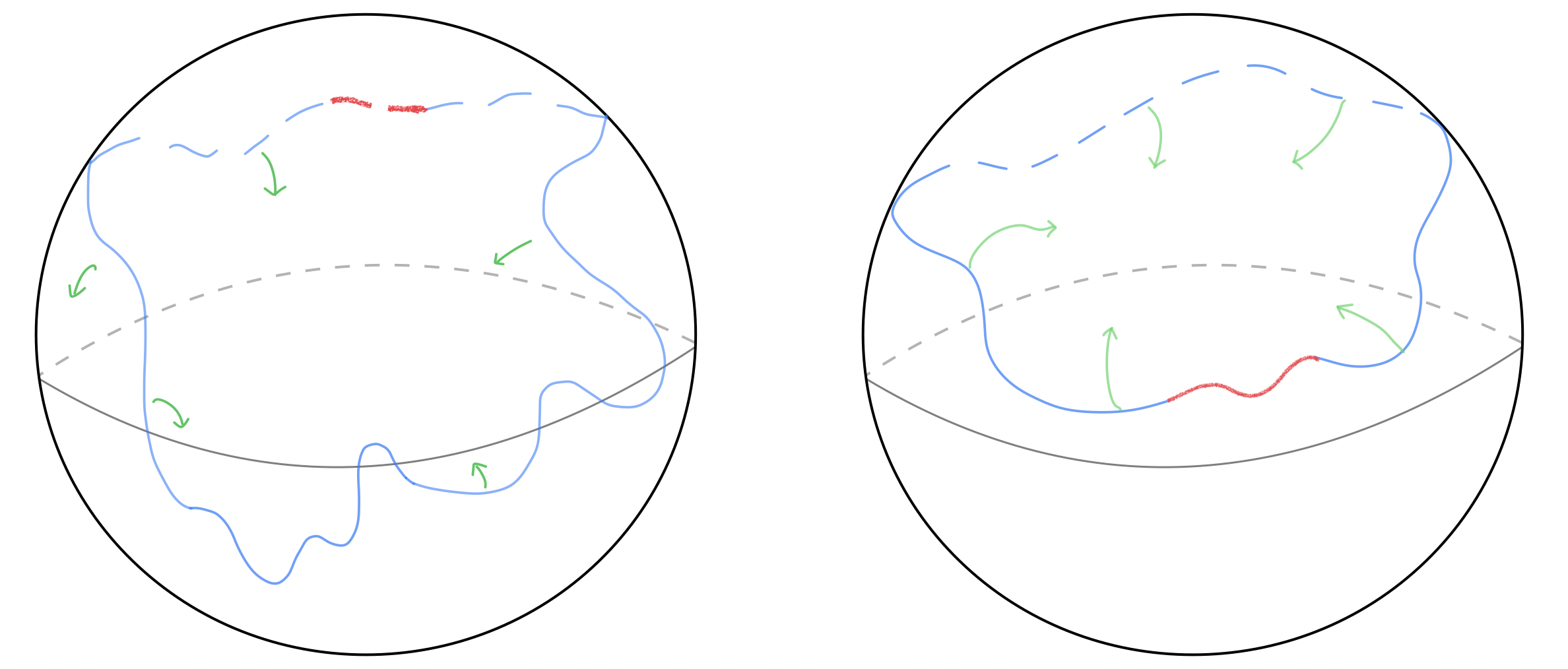}
\caption{The natural energy dissipation of the harmonic map heat flow from $\mathbb{T}^1$ to $\mathbb{S}^2$.  The green arrow indicates the deformation of the curve (solution). See Proposition \ref{thm:homopotyconver} for details on this convergence result.  \protect \\
The  picture on the left shows that the solution converges to a harmonic map having $2\pi$-energy. This picture is also related to \hyperref[Step1]{Step 1} of Section \ref{sec:strategy}.  \protect \\
The picture on the right shows that the solution converges to a constant state. This picture is also related to \hyperref[Step4]{Step 4} of Section \ref{sec:strategy}.}
\label{pic2}
\end{figure}

Many important works are devoted to the asymptotic convergence to harmonic maps of the harmonic map heat flow resulting in fruitful results. In
 \cite{Eells-Sampson-1964} Eells and Sampson proved this convergence when the Riemannian sectorial curvature of $\mathcal{N}$ is nonpositive and both $\mathcal{M}$ and $\mathcal{N}$ are compact manifolds without boundary. Then  in \cite{Hamilton-1975} Hamilton further showed the same result when $\mathcal{M}$ and $\mathcal{N}$ are compact manifolds with boundary. For the case where $\mathcal{M}$ is simply $\mathbb{T}^1$ and $\mathcal{N}$ is an arbitrary Riemannian manifold, Ottarsson proved the convergence result in \cite{Ottarsson}.     We refer to the review papers \cite{Eells-Lemaire, Eells-Lemaire-2} on this topic. It is noteworthy that in most of the literature the results obtained are the so-called subconvergence property, namely that  there exists a harmonic map  and a sequence of times $\{t_k\}_k$ tending to infinity such that $u(t_k)$ converges to a harmonic map. The ``unique asymptotic limit" problem  asks if and when we can replace the ``subconvergence'' by ``convergence''.  Regarding  this problem, in cases where the target manifold $\mathcal{N}$ is real analytic  and satisfies several other conditions, including having non-positive curvature, the result can be improved to a convergence result. See for example \cite[Corollary 2]{1983-Simon-AM}.   Notice that such asymptotic behavior result in our framework is covered by the general result given by Ottarsson in  \cite{Ottarsson}, more precisely, one has the following subconvergence result.
\begin{prop}[ \cite{Ottarsson}, convergence to harmonic maps of the heat flow]\label{thm:homopotyconver}
    For any initial state $u_0\in H^1(\mathbb{T}^1; \Sph^k)$, for any $\varepsilon>0$, there exists some time $T= T(\varepsilon, u_0)$ such that the unique solution of \eqref{eq:freehmhfs} becomes a  ``$\varepsilon$-approximate harmonic map" at time $T$.
\end{prop}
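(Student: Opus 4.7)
The plan is to follow the classical energy-dissipation strategy, which in the one-dimensional setting is quite clean because of the subcritical nature of the gradient nonlinearity. First, I would establish global existence together with uniform higher regularity. Second, I would use the dissipation identity \eqref{ine:enedir} to extract a sequence of times along which $\partial_t u$ vanishes in $L^2$. Third, I would pass to a limit (compactness), which must satisfy the harmonic map equation. Finally, I would translate the strong $H^1$-convergence into the claimed $\varepsilon$-closeness.

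\textbf{Global existence and uniform $H^2$ bound.} Standard semilinear parabolic theory gives local well-posedness in $H^1(\mathbb{T}^1;\mathbb{S}^k)$, with the constraint $|u|=1$ preserved since $\partial_t u-\Delta u \perp T_u \mathbb{S}^k$. Integrating \eqref{ine:enedir} yields $E(u(t))\le E(u_0)$ for all $t\ge 0$, hence a uniform $H^1$-bound. In one space dimension $H^1\hookrightarrow L^\infty$, so $\bigl\||\partial_x u|^2 u\bigr\|_{L^2}\le \|\partial_x u\|_{L^\infty}\|\partial_x u\|_{L^2}$ is controlled by the $H^2$-norm and the energy; a standard bootstrap using parabolic smoothing yields a uniform bound $\|u(t)\|_{H^2}\le C(u_0)$ for $t\ge 1$, and in particular global existence.

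\textbf{Extracting the good sequence and passing to the limit.} Integrating \eqref{ine:enedir} in time we obtain
\begin{equation*}
\int_0^\infty \|\partial_t u(t)\|_{L^2}^2\,dt \;\le\; \tfrac12 E(u_0) \;<\;\infty,
\end{equation*}
so there is a sequence $t_k\to\infty$ with $\|\partial_t u(t_k)\|_{L^2}\to 0$. By the uniform $H^2$-bound and Rellich's theorem, along a subsequence (still denoted $t_k$) $u(t_k)\to \Phi$ strongly in $H^1$ and weakly in $H^2$ for some $\Phi\in H^2(\mathbb{T}^1;\mathbb{S}^k)$. Since $\partial_x u(t_k)$ is bounded in $H^1$, Rellich also gives $\partial_x u(t_k)\to\partial_x\Phi$ in every $L^p$ with $p<\infty$, while $u(t_k)\to\Phi$ in $L^\infty$. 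Hence $|\partial_x u(t_k)|^2 u(t_k)\to |\partial_x\Phi|^2\Phi$ in $L^2$, and passing to the limit in
\begin{equation*}
\Delta u(t_k)+|\partial_x u(t_k)|^2 u(t_k) \;=\; \partial_t u(t_k) \;\xrightarrow{L^2}\; 0
\end{equation*}
gives $\Delta\Phi + |\partial_x\Phi|^2\Phi = 0$, i.e.\ $\Phi$ is a harmonic map from $\mathbb{T}^1$ to $\mathbb{S}^k$ (a constant map or a closed geodesic traversed at constant speed).

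\textbf{Conclusion and main obstacle.} Given $\varepsilon>0$, choose $k$ large enough that $\|u(t_k)-\Phi\|_{H^1}\le\varepsilon$; then $T:=t_k$ satisfies $u(T)\in\mathcal{Q}_\varepsilon$. The only delicate point is the uniform $H^2$-bound required for compactness: the gradient nonlinearity $|\partial_x u|^2 u$ is critical in dimension two but strictly subcritical in dimension one, so the bootstrap closes cleanly thanks to $H^1\hookrightarrow L^\infty$. Once this and the strong $H^1$-convergence are available, convergence of the quadratic nonlinearity — which would be the other potential trouble spot — is immediate, and the subconvergence statement follows.
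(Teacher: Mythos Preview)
Your argument is correct: the uniform $H^2$ bound for $t\ge 1$ (which, though you call it ``standard'', does rely on combining the differential inequality for $\|u_t\|_{L^2}^2$ with the integrability $\int_0^\infty\|u_t\|_{L^2}^2\,dt<\infty$ to rule out growth), followed by Rellich compactness along a sequence where $\|u_t(t_k)\|_{L^2}\to 0$, cleanly produces a harmonic limit $\Phi$ and the required $\varepsilon$-closeness.

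The paper, however, takes a genuinely different route. Instead of appealing to a uniform-in-time $H^2$ bound on the single trajectory, it first proves a \emph{flux lemma} (Lemma~\ref{lem:flux}): whenever the energy lies strictly between two consecutive harmonic levels $2\pi N^2$ and $2\pi(N+1)^2$, the dissipation $\int_0^T\|u_t\|_{L^2}^2$ has a uniform positive lower bound depending only on the distance $\delta$ to those levels. This forces $E(t)\to 2\pi N^2$. Then, at a single time $t_0$ where $\|u_t(t_0)\|_{L^2}^2\le\varepsilon$, the paper does an explicit computation: it shows $|u_x(t_0,x)|^2=N^2+O(\varepsilon^{1/2})$ pointwise (using $\partial_x|u_x|^2=2u_x\cdot u_t$), rewrites the tension equation as $u_{xx}+N^2u=O(\varepsilon^{1/2})$ in $L^2$, and reads off from the Fourier expansion that $u(t_0,\cdot)$ is $O(\varepsilon^{1/2})$-close in $H^1$ to an explicit harmonic map. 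Your approach is the classical compactness argument and is arguably shorter; the paper's approach is more quantitative and, crucially, isolates Lemma~\ref{lem:flux} as an independent tool --- the paper reuses it to obtain the semi-global exponential decay of Corollary~\ref{thm-expo-decay}, which your soft compactness argument does not deliver on its own.
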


Although Proposition \ref{thm:homopotyconver} is well known in the literature even when $\mathbb{S}^k$ is replaced by an arbitrary Riemannian manifold $\mathcal{N}$, in this paper we present a direct proof of Proposition \ref{thm:homopotyconver} concerning this specific case for readers' convenience. This direct proof is based on the flux method introduced in \cite{Krieger-Xiang-2022}, and it is  easier to follow than previous proofs.
Its idea is to use the smallness of  flux to recover the smallness of  states (while, more precisely, the closeness of  states to harmonic maps in this
specific context); see, in particular, Lemma \ref{lem:flux}. This proof can be found in Section \ref{sec:globalconvergence}.\\

This convergence result, together with the continuous dependence property of the flow, implies that for any given initial state $u(0, \cdot)$ that is sufficiently close to $u_0$ in $H^1$-topology,  the unique solution becomes a  ``$2\varepsilon$-approximate harmonic maps" after a certain time $T(\varepsilon, u_0)$ of evolution.  Thanks to the strong smoothing effect of the harmonic map heat flow (see for example Lemma \ref{lem:freeheat}), for any $M>0$ there exists an explicit constant $C_M$ such that, for any initial state $u(0, \cdot)$ satisfying $\|u(0)\|_{H^1}\leq M$, the unique solution satisfies $\|u(1)\|_{H^2}\leq C_M$. Thus $u(1, \cdot)$ stays in a compact subspace of $H^1(\mathbb{T}^1)$, as the Sobolev embedding $H^2(\mathbb{T}^1)\hookrightarrow H^1(\mathbb{T}^1)$ is compact. This leads to the following uniform convergence result.
\begin{cor}
\label{cor-uniform}
Let $M>0$. For any $\varepsilon>0$, there exists some time $T= T(\varepsilon, M)$ such that,  for any initial state $u_0\in H^1(\mathbb{T}^1; \Sph^k)$ belongs to $\mathbf{H}(M)$,  the unique solution of \eqref{eq:freehmhfs} becomes a  ``$\varepsilon$-approximate harmonic maps" at some time $t_0\in (0, T)$.
\end{cor}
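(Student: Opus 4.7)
The plan is to deduce this uniform statement from the pointwise version (Proposition \ref{thm:homopotyconver}) by a compactness-plus-continuous-dependence argument, which is precisely the strategy sketched in the paragraph preceding the corollary. The key ingredients are (i) the parabolic smoothing effect (cited as Lemma \ref{lem:freeheat}) giving an $H^2$-bound depending only on $M$ after unit time, (ii) the compact Sobolev embedding $H^2(\mathbb{T}^1)\hookrightarrow H^1(\mathbb{T}^1)$, and (iii) continuous dependence of the flow on $H^1$ initial data on bounded time intervals.

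First, I would reduce to the case where the initial data lie in a compact set of $H^1$. By the smoothing estimate, $u(1,\cdot)$ ranges over a bounded set in $H^2$ as $u_0$ ranges over $\mathbf{H}(M)$, hence over a precompact set $K\subset H^1$ (after relabelling $t\mapsto t-1$, it therefore suffices to prove the uniform statement for initial data in $K$). Second, I would argue by contradiction: if the conclusion fails, there exist $\varepsilon_0>0$, a sequence of times $T_n\to\infty$, and initial data $v_0^n\in K$ such that the corresponding solutions $v^n$ satisfy $v^n(t)\notin\mathcal{Q}_{\varepsilon_0}$ for all $t\in(0,T_n)$.

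Third, by compactness of $K$ I extract a subsequence with $v_0^n\to v_0^\infty$ in $H^1$. Applying Proposition \ref{thm:homopotyconver} to $v_0^\infty$ yields some finite time $T^*=T(\varepsilon_0/2,v_0^\infty)$ at which the solution $v^\infty$ issued from $v_0^\infty$ is an $(\varepsilon_0/2)$-approximate harmonic map, i.e.\ there is a harmonic map $\Phi$ with $\|v^\infty(T^*)-\Phi\|_{H^1}\le \varepsilon_0/2$. By continuous dependence of the flow on $[0,T^*]$ in $H^1$, for $n$ large enough $\|v^n(T^*)-v^\infty(T^*)\|_{H^1}<\varepsilon_0/2$, whence $\|v^n(T^*)-\Phi\|_{H^1}<\varepsilon_0$, contradicting $v^n(t)\notin\mathcal{Q}_{\varepsilon_0}$ once $T_n>T^*$.

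The main obstacle is the continuous-dependence step, since it must hold on an \emph{a priori} unbounded time interval (the convergence time $T^*$ depends on the limit $v_0^\infty$, which in turn depends on the subsequence). However, once the $H^1$-compactness reduction is carried out, $T^*$ becomes a fixed finite number attached to the extracted subsequential limit, so only local-in-time well-posedness in $H^1$ together with the uniform parabolic bounds is needed; this is standard for the harmonic map heat flow from $\mathbb{T}^1$. The remaining bookkeeping, including translating the time shift introduced by the smoothing step back to the original clock, only changes $T$ by an additive constant and does not affect the conclusion.
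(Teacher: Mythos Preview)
Your proposal is correct and follows essentially the same approach as the paper, which only sketches the argument in the paragraph immediately preceding the corollary: use parabolic smoothing (Lemma~\ref{lem:forcehmhf}) to reduce to a compact set in $H^1$, then combine the pointwise convergence result (Proposition~\ref{thm:homopotyconver}) with continuous dependence of the flow to obtain uniformity. Your contradiction argument is a standard and valid way to make the compactness step explicit; the paper leaves this implicit.
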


Moreover, by combining Lemma \ref{lem:flux} and the local exponential stability property \eqref{eq:localexdecay} we immediately obtain the following semi-global stability result, the proof of which we omit.
\begin{cor}[Semi-global exponential stability]\label{thm-expo-decay}
Let $\varepsilon>0$. There exists  $C= C(\varepsilon)$  such that   for any initial state $u_0\in H^1(\mathbb{T}^1; \mathbb{S}^k)$ belongs to $\textbf{H}(2\pi- \varepsilon)$ the unique solution of \eqref{eq:freehmhfs}   satisfies
\begin{equation*}
    E(u(t))\leq C e^{- \frac{1}{2\pi^2}t} E(u_0) \; \forall t>0.
\end{equation*}
\end{cor}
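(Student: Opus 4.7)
The plan is to combine three ingredients already available: the global energy dissipation from \eqref{ine:enedir}, the uniform subconvergence to harmonic maps of Corollary~\ref{cor-uniform}, and the local exponential decay \eqref{eq:localexdecay} valid whenever $E\le 1/(2\pi)$. The key point is to show that, under the hypothesis $E(u_0)\le 2\pi-\varepsilon$, after a time $T_0=T_0(\varepsilon)$ the energy is forced below $1/(2\pi)$, so that the local decay takes over for all subsequent times.

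To implement this, I would first note that the identity $\tfrac12 \tfrac{d}{dt}E(u(t))=-\|u_t\|_{L^2}^2$ from \eqref{ine:enedir} yields that $E(u(t))$ is nonincreasing, so the trajectory stays in $\mathbf{H}(2\pi-\varepsilon)$ for all $t\ge 0$. Applying Corollary~\ref{cor-uniform} with $M=2\pi-\varepsilon$ and a small parameter $\delta=\delta(\varepsilon)>0$ to be fixed below, I obtain a uniform time $T_0=T_0(\delta,\varepsilon)$ and some $t_0\in(0,T_0)$ at which $u(t_0)$ is $\delta$-close in $H^1$ to a harmonic map $\Phi:\mathbb{T}^1\to\mathbb{S}^k$.

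The main obstacle, and the only step that is not bookkeeping, is then to force $\Phi$ to be constant. For this I invoke the classification of harmonic maps from $\mathbb{T}^1$ into $\mathbb{S}^k$: they are exactly the closed geodesics of $\mathbb{S}^k$, namely constants and great circles traversed $n\ge 1$ times, with energies in $\{0\}\cup\{2\pi n^2 : n\ge 1\}$ (the value $|\Phi_x|^2$ being constant along a geodesic on $\mathbb{S}^k$). Since $\|u(t_0)-\Phi\|_{H^1}\le\delta$, the reverse triangle inequality in $L^2$ gives $\sqrt{E(\Phi)}\le \sqrt{E(u(t_0))}+\delta\le \sqrt{2\pi-\varepsilon}+\delta$; choosing $\delta=\delta(\varepsilon)$ small enough forces $E(\Phi)<2\pi$, so $\Phi$ must be a constant, and then $E(u(t_0))=\|u_x(t_0)-\Phi_x\|_{L^2}^2\le \delta^2$. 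Shrinking $\delta$ further to ensure $\delta^2\le 1/(2\pi)$ places $u(t_0)$ in the local decay regime.

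Finally, applying \eqref{eq:localexdecay} from time $t_0$ onward gives $E(u(t))\le e^{-(t-t_0)/(2\pi^2)}E(u(t_0))\le e^{-(t-t_0)/(2\pi^2)}E(u_0)$ for $t\ge t_0$, while monotonicity gives $E(u(t))\le E(u_0)$ on $[0,t_0]$. Combining these with $t_0\le T_0(\varepsilon)$ yields the stated inequality with $C(\varepsilon):=e^{T_0(\varepsilon)/(2\pi^2)}$. The threshold $2\pi-\varepsilon$ in the hypothesis is dictated precisely by the energy gap between the zero-energy constant harmonic maps and the first non-trivial closed geodesics at energy $2\pi$; if the initial energy were allowed to reach $2\pi$, the subconvergence could select a non-constant geodesic as a limiting harmonic map, breaking the argument.
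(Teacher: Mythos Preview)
Your argument is correct and follows the route the paper itself sketches (the proof is omitted there): use the energy gap below $2\pi$ to rule out nonconstant harmonic limits, enter the regime $E\le 1/(2\pi)$ in a uniform time, and then let \eqref{eq:localexdecay} do the rest. The only cosmetic difference is that the paper points to Lemma~\ref{lem:flux} directly rather than to Corollary~\ref{cor-uniform}: with $N=0$, Lemma~\ref{lem:flux} gives a uniform energy drop $E(t+1)\le E(t)-2b(\varepsilon)$ whenever $E(t)\in(\delta,2\pi-\varepsilon)$, so the energy falls below $1/(2\pi)$ after at most $\lceil \pi/b(\varepsilon)\rceil$ unit steps, without passing through the approximate-harmonic-map description or the classification of geodesics; your route through Corollary~\ref{cor-uniform} and the explicit spectrum $\{0\}\cup\{2\pi n^2\}$ reaches the same conclusion and is equally valid.
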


\subsubsection{\textbf{Stabilization of the harmonic map heat flow}}
Inspired by the recent works on the stabilization of wave maps equation \cite{Coron-Krieger-Xiang-1, Krieger-Xiang-2022}, we are also interested in the related stabilization problems for the harmonic map heat flow. According to Corollary \ref{thm-expo-decay} the system is locally exponentially stable.  However, since harmonic maps are steady states, the system is not globally stable.  Concerning this subject we successively prove the following  results:
\begin{itemize}
    \item Local rapid stabilization, namely, by adding a well-chosen control force one can enhance the dissipation of the system to make it as fast as we want provided that the initial state is not large enough.
    \item Obstruction to uniform asymptotic stabilization in $\mathbf{H}(2\pi)$.
\end{itemize}

Concerning energy  stabilization problems, we search for continuous time-varying (both local and non-local) feedback laws
\begin{align}\label{feedbacklaw-def}
\begin{array}{ccc}
    F: \mathbb{R}\times H^1(\mathbb{T}^1)&\rightarrow& L^2(\mathbb{T}^1)\\
    (t; u)&\mapsto&  F(t; u)
\end{array}
\end{align}
to enhance the stability  of the original system. Namely, what is the asymptotic behavior of the following closed-loop system
\begin{equation}\label{closed-loop-timevarying}
     \partial_t u(t,x)- \Delta u(t,x)=  |u_{ x}|^2(t,x) u(t,x)+  \mathbf{1}_{\omega} (F(t; u(t, \cdot)))^{u(t,x)^{\perp}} (t, x)?
\end{equation}
 The following natural definition is borrowed from \cite{Coron-Krieger-Xiang-1, 2017-Coron-Rivas-Xiang-APDE} (also refer to the monograph \cite[Definition 24.2, page 97]{1967-Hahn-book}).
\begin{defn}\label{DEF-usstab}
The system \eqref{closed-loop-timevarying} is called uniformly asymptotically stable in the energy level set $\textbf{H}(q)$ if there exists a $\mathcal{KL}$ function $h$, i.e. a continuous function $h: \mathbb{R}^+\times \mathbb{R}^+ \rightarrow \mathbb{R}^+$ satisfying
\begin{gather*}
  \textrm{for any } t \in [0, +\infty),\;   h(\cdot,t) \text{ is strictly increasing and vanishes at $0$,}
\\
  \textrm{for any } s \in [0, +\infty),\;  h(s,\cdot) \text{ is decreasing and } \lim_{t\rightarrow +\infty}h(s,t)=0,
\end{gather*}
 such that the energy decays uniformly as follows:
\begin{equation}\label{estEphit}
    E(u(t))\leq h(E(u(0)),t)  \; \forall t\in (0, +\infty),\;   \forall u(0) \in \textbf{H}(q).
\end{equation}
\end{defn}

Due to  Corollary \ref{thm-expo-decay}  the system is locally uniformly exponentially stable without adding any special feedback law. However, the exponential decay rate is bounded. To achieve better stability properties such as rapid stability  one relies on extra forces, namely feedback laws. The following result shows that with the help of some well-designed explicit feedback laws the system can decay as fast as we want.
\begin{thm}[Quantitative local rapid stabilization]\label{thm-rapid-stab}
There exists some effectively computable constant \footnote{Throughout this paper we use the expression \textit{effectively computable} to describe a number that can be explicitly calculated, and the notation $a\lesssim b$ to indicate that $a\leq C b$ where $C$ is some effectively computable constant.}  $C>0$  such that for any $\lambda>1$ and for any point $q\in \mathbb{S}^k$, one can design an explicit time-independent feedback law
\begin{align*}\label{feedbacklaw-rapid}
    F= F(\lambda, q):  H^1(\mathbb{T}^1)&\rightarrow L^2(\mathbb{T}^1), \\
    u &\mapsto  F(u)= F(\lambda, q)(u),
\end{align*}
such that for any initial state $u_0\in H^1(\mathbb{T}^1; \mathbb{S}^k)$ satisfying $\|u_0- q\|_{H^1}\leq e^{-C \sqrt{\lambda}}$
the unique solution of the system
  \begin{equation*}
    \partial_t u- \Delta u- |\partial_x u|^2 u=   \mathbf{1}_{\omega} (F u)^{u^{\perp}},
\end{equation*}
decays exponentially
\begin{equation*}
   \|u(t)\|_{\dot H^1}\leq e^{C \sqrt{\lambda}} e^{-\lambda t}  \|u_0\|_{\dot H^1} \; \forall t\in (0, +\infty)
\end{equation*}
where
$$
\|v\|_{\dot H^1}:= \sqrt{E(v)}.
$$
\end{thm}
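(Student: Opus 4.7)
The plan is to reduce to the linearized problem at the constant state $q$, perform rapid stabilization for the resulting linear heat equation on $\mathbb{T}^1$ with localized control, and then control the geometric nonlinearity by a bootstrap argument that exploits the smallness assumption $\|u_0-q\|_{H^1}\leq e^{-C\sqrt{\lambda}}$.

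First, I would linearize near $q$. Writing $u=q+w$ with $w:\mathbb{T}^1\to\mathbb{R}^{k+1}$, the constraint $|u|^2=1$ forces $q\cdot w=-|w|^2/2$, so modulo an $O(|w|^2)$ correction $w$ lies in $T_q\mathbb{S}^k\cong\mathbb{R}^k$. The equation becomes
$$
\partial_t w-\Delta w= |w_x|^2(q+w)+\mathbf{1}_\omega(Fu)^{u^\perp},
$$
whose right-hand side is quadratic in $w$ except for the control, and the linear part decouples into $k$ scalar heat equations $\partial_t w_j-\Delta w_j=\mathbf{1}_\omega v_j$ on $\mathbb{T}^1$.

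Next, for each scalar component I would design a time-independent linear feedback $w_j\mapsto K_\lambda w_j$, supported in $\omega$, stabilizing the closed-loop scalar heat equation at rate $\lambda$. This is the classical rapid stabilization problem for the 1D heat equation, and a backstepping transformation produces such a feedback with Lyapunov cost $e^{C\sqrt{\lambda}}$: there is an $H^1$-isomorphism $\Pi_\lambda$ with $\|\Pi_\lambda\|,\|\Pi_\lambda^{-1}\|\lesssim e^{C\sqrt{\lambda}}$ conjugating the closed-loop operator to one whose semigroup decays at rate $\lambda$. The square-root dependence reflects the $e^{C/T}$ null-controllability cost of the heat equation optimized against $T\sim 1/\sqrt{\lambda}$. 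Then I would define the nonlinear feedback $F(\lambda,q)$ by composing the tangential extension of $K_\lambda$ with the projection $P_{u^\perp}$; this choice ensures that its linearization at $q$ coincides with $K_\lambda$.

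Finally, I would close the argument by a nonlinear bootstrap on the Lyapunov functional $V(w):=\|\Pi_\lambda w\|_{\dot H^1}^2\sim e^{C\sqrt{\lambda}}\|w\|_{\dot H^1}^2$. Using the backstepping identity together with the curvature correction $P_{u^\perp}-P_{q^\perp}=O(|w|)$, one obtains
$$
\tfrac{d}{dt}V(w)\leq -2\lambda V(w)+Ce^{C\sqrt{\lambda}}\bigl(\|w_x\|_{L^4}^4+\|w\|_{L^\infty}\|w_x\|_{L^2}^2\bigr).
$$
Because $H^1(\mathbb{T}^1)\hookrightarrow L^\infty$, interpolation bounds the error by $V(w)^{3/2}$ up to a multiplicative $e^{C\sqrt{\lambda}}$. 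A standard Gronwall/continuity argument then yields $V(w(t))\leq e^{-2\lambda t}V(w(0))$ for all $t\geq 0$ as long as $V(w(0))\leq e^{-C\sqrt{\lambda}}$, which, after undoing the backstepping transform, is exactly the stated decay with the stated smallness threshold.

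The main obstacle is the sharp quantitative construction of the backstepping feedback with cost $e^{C\sqrt{\lambda}}$, since the usual kernel equation on $\mathbb{T}^1$ must be analyzed carefully to track the $\sqrt{\lambda}$-exponent; a secondary difficulty is ensuring that the curvature-induced defect $P_{u^\perp}-P_{q^\perp}$ applied to a feedback of size $e^{C\sqrt{\lambda}}$ remains genuinely perturbative, which is exactly what dictates the balance $\|u_0-q\|_{H^1}\lesssim e^{-C\sqrt{\lambda}}$ between the amplification factor of the linear stabilization and the smallness of the initial data.
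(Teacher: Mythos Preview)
Your overall architecture---reduce to an unconstrained problem near $q$, rapidly stabilize the linear heat equation with cost $e^{C\sqrt{\lambda}}$, then close by a bootstrap on a Lyapunov functional---matches the paper's. The differences lie in the two key implementation choices.

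First, the paper does not work with the additive splitting $u=q+w$, which leaves $w$ constrained by $q\cdot w=-|w|^2/2$ and forces you to carry the curvature defect $P_{u^\perp}-P_{q^\perp}$ through the estimates. Instead it applies the stereographic projection $\mathbb{P}$ from the antipode of $q$, which turns the harmonic map heat flow into a genuinely unconstrained semilinear heat equation for $v=\mathbb{P}u\in\mathbb{R}^k$,
\[
\partial_t v-\Delta v+\frac{2s_x}{4+s}v_x-\frac{2|v_x|^2}{4+s}v=\mathbf{1}_\omega g,\qquad s=|v|^2,
\]
with the control $g$ appearing linearly. This eliminates the geometric bookkeeping you identify as your ``secondary difficulty''.

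Second, and more substantively, the paper does not use backstepping. It uses the \emph{frequency Lyapunov method}: the feedback is simply $g=-\lambda e^{C_0\sqrt{\lambda}}\,\mathbf{1}_\omega P_\lambda v$, where $P_\lambda$ is the spectral projector onto frequencies $\leq\sqrt{\lambda}$, and the Lyapunov functional is $V_\lambda(v)=\mu_\lambda\|P_\lambda v\|_{L^2}^2+\|\partial_x P_\lambda^\perp v\|_{L^2}^2$ with $\mu_\lambda=\lambda e^{2C_0\sqrt{\lambda}}$. The $e^{C\sqrt{\lambda}}$ cost comes directly from the Lebeau--Robbiano spectral inequality $\|P_\lambda f\|_{L^2(\omega)}\geq e^{-C_0\sqrt{\lambda}}\|P_\lambda f\|_{L^2(\mathbb{T}^1)}$, and the differential inequality $\tfrac{d}{dt}V_\lambda\leq -\lambda V_\lambda+Ce^{4C_0\sqrt{\lambda}}V_\lambda^{5/3}+CV_\lambda^3$ follows from a short energy computation. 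This bypasses precisely the obstacle you flag: there is no kernel equation to solve, and the $\sqrt{\lambda}$-exponent is inherited from a known inequality rather than extracted from a backstepping construction---which, for \emph{interior} control on a periodic domain, is not standard and would require real work to set up with the correct quantitative bounds. Your route is plausible in principle but the paper's is both shorter and avoids the step you yourself identify as the main obstacle.
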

The proof of this theorem is based on the stereographic projection as well as the frequency Lyapunov method introduced  by the second author in \cite{Xiang-heat-2020, Xiang-NS-2020}. We refer to Section \ref{sec:local-control-stabilization}, in particular to  Lemma \ref{lem-rapid-stab} and Remark \ref{rem:rapid-lem-thm}, for details.
Moreover, as a direct consequence of this quantitative rapid stabilization result, see Proposition \ref{prop:nullcontrol} for details, one further obtains the following small-time local controllability result.

\begin{thm}[Quantitative local null controllability]\label{thm:nullcontrol}
The controlled harmonic map heat flow equation
    \begin{equation}
    \begin{cases}
    \partial_t u- \Delta u- |\partial_x u|^2 u= \mathbf{1}_{\omega} f^{u^{\perp}}, \notag \\
    u(0, \cdot)= u_0(\cdot),
    \end{cases}
\end{equation}
is locally null controllable in small time in the sense that, there exists an effectively computable constant $C>1$ such that for any $T\in (0, 1)$, for any initial state $u_0\in H^1(\mathbb{T}^1; \mathbb{S}^k)$ and any point $p\in \mathbb{S}^k$ satisfying
\begin{equation*}
    \|u_0- p\|_{H^1(\mathbb{T}^1)}\leq e^{-\frac{C}{T}},
\end{equation*}
we can construct an explicit control $f\in L^{\infty}(0, T; L^2(\mathbb{T}^1))$ satisfying
\begin{equation}
\|f\|_{L^{\infty}(0, T; L^2(\mathbb{T}^1))}\leq e^{\frac{C}{T}}\|u_0- p\|_{H^1(\mathbb{T}^1)} \notag
\end{equation}
such that the unique solution $u\in C([0, T]; H^1(\mathbb{T}^1))\cap L^2(0, T; H^2(\mathbb{T}^1))$ of the Cauchy problem satisfies $u(T, \cdot)= p.$
\end{thm}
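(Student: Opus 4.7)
The plan is to derive this small-time null controllability from the quantitative rapid stabilization of Theorem~\ref{thm-rapid-stab} by iterating it on a telescoping sequence of shrinking time intervals with super-exponentially growing decay rates. Such a scheme is a classical control-theoretic device, also employed by the second author in \cite{Xiang-heat-2020, Xiang-NS-2020}: since the rate $\lambda$ of Theorem~\ref{thm-rapid-stab} is at our disposal, concatenating many stabilization pieces of arbitrarily fast decay within $[0,T]$ forces the state to reach $p$ \emph{exactly} at time $T$.

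Concretely, I would partition $[0,T]$ by $t_n := T(1-2^{-n})$, so that $\Delta_n := t_{n+1}-t_n = T\,2^{-n-1}$ and $t_n \nearrow T$, and on each subinterval $[t_n,t_{n+1}]$ apply the feedback $F(\lambda_n,p)$ of Theorem~\ref{thm-rapid-stab} with $\lambda_n := K\,4^n/T^2$, for a fixed large constant $K$ depending only on the constant $C_0$ of Theorem~\ref{thm-rapid-stab}. Adjusting the constant $C$ of the present statement so that $C \geq C_0\sqrt{K}$, the hypothesis $\|u_0-p\|_{H^1}\leq e^{-C/T}$ delivers the base smallness $\|u_0-p\|_{H^1}\leq e^{-C_0\sqrt{\lambda_0}}$ needed to apply Theorem~\ref{thm-rapid-stab} on the first piece.

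The inductive step is the heart of the proof: assuming $\|u(t_n)-p\|_{H^1}\leq e^{-C_0\sqrt{\lambda_n}}$, Theorem~\ref{thm-rapid-stab} yields
$$\|u(t_{n+1})-p\|_{H^1} \ \leq\ e^{C_0\sqrt{\lambda_n}-\lambda_n\Delta_n}\,\|u(t_n)-p\|_{H^1}.$$
With $\lambda_n \Delta_n = K\,2^n/(2T)$ and $C_0\sqrt{\lambda_n}=C_0\sqrt{K}\,2^n/T$, choosing $K \geq 64 C_0^2$ makes the exponent $\leq -K\,2^n/(3T)$, which in turn dominates $C_0\sqrt{\lambda_{n+1}}$, so the inductive closeness hypothesis is preserved with considerable room to spare. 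A subtle point is that Theorem~\ref{thm-rapid-stab} as stated controls only the semi-norm $\|u\|_{\dot H^1}$; but the feedback $F(\lambda,p)$ is built via stereographic projection centered at $p$, and the frequency Lyapunov estimate of \cite{Xiang-heat-2020} genuinely controls the full $H^1$ norm of the projected variable, equivalent to $\|u-p\|_{H^1}$ in a neighborhood of $p$.

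Finally, the super-exponential decay $\|u(t_n)-p\|_{H^1}\to 0$ combined with $t_n \to T$ and the parabolic $H^1$-continuity of the trajectory forces $u(T)=p$, giving null controllability. For the control bound, on $[t_n,t_{n+1}]$ the feedback satisfies $\|f_n\|_{L^\infty(L^2)}\lesssim \lambda_n e^{C_0\sqrt{\lambda_n}}\|u(t_n)-p\|_{H^1}$, and iterating the inductive contraction gives $\|u(t_n)-p\|_{H^1}\lesssim \|u_0-p\|_{H^1}\,e^{-K(2^n-1)/(3T)}$, which double-exponentially defeats the $\lambda_n e^{C_0\sqrt{\lambda_n}}$ factor. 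The $n=0$ piece therefore dominates, producing the announced bound $\|f\|_{L^\infty(0,T;L^2)}\lesssim e^{C'/T}\|u_0-p\|_{H^1}$ on $T\in(0,1)$. The main technical obstacle is precisely this tight constant-chase: ensuring at every step that the prefactor $e^{C_0\sqrt{\lambda_n}}$ introduced by rapid stabilization is always beaten by the contraction $e^{-\lambda_n\Delta_n}$, which dictates the scaling $\lambda_n \sim 4^n/T^2$ and the required smallness $e^{-C/T}$ of the initial state.
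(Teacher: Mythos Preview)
Your proposal is correct and follows essentially the same approach as the paper: both derive null controllability from quantitative rapid stabilization via the telescoping iteration of \cite{Xiang-heat-2020, Xiang-NS-2020}, partitioning $[0,T]$ into infinitely many pieces with $\lambda_n$ growing geometrically. The paper organizes this by first passing to the stereographic variable $v$ (Proposition~\ref{prop:nullcontrol} and Lemma~\ref{lem-rapid-stab}) and then citing \cite[Theorem~4.1]{Xiang-NS-2020} for the iteration, whereas you work directly with Theorem~\ref{thm-rapid-stab} and spell out the choices $t_n=T(1-2^{-n})$, $\lambda_n\sim 4^n/T^2$ explicitly; you also correctly identify and resolve the $\dot H^1$ versus $H^1$ issue by appealing to the underlying $v$-equation, which is precisely how the paper handles it.
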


\begin{figure}[t]
\centering
\includegraphics[width=0.8\linewidth, trim={0cm 0.0cm 0cm 0.0cm},clip]{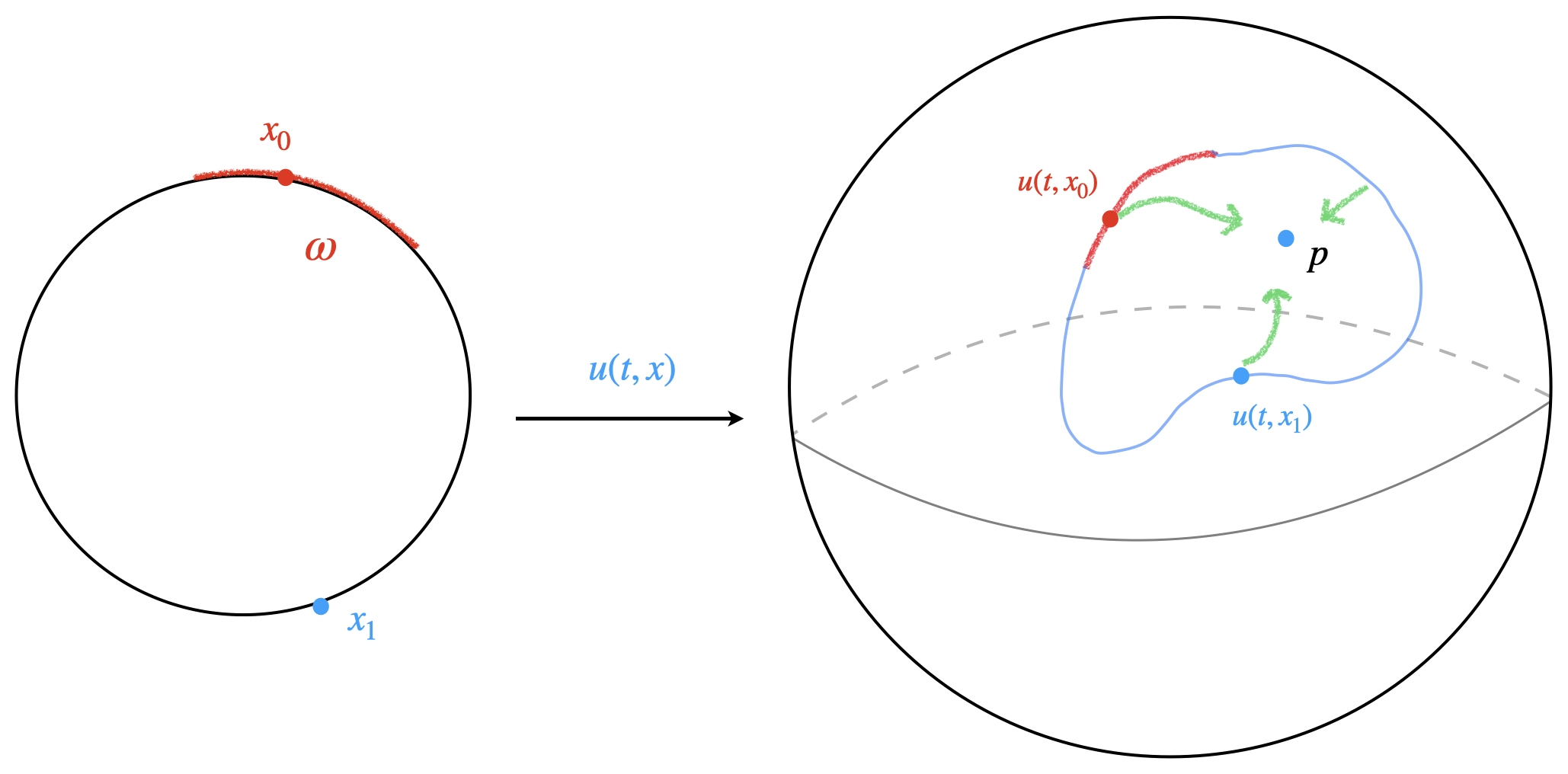}
\caption{The local quantitative rapid stabilization and local null controllability of the equation around some given point $p\in \mathbb{S}^2$. See Theorem \ref{thm-rapid-stab} and Theorem \ref{thm:nullcontrol} for details.  \protect  \\
This picture is also related to \hyperref[Step5]{Step 5} of Section \ref{sec:strategy}.
}
\label{pic4}
\end{figure}

\begin{remark}\label{rmk:Liu}
 The local controllability of the nonlinear heat equations is extensively studied in the literature, we refer to \cite{2016-Coron-Nguyen-ARMA,  1971-Fattorini-Russell-ARMA,  1995-Guo-Littman, MR4153111, 1977-Jones-Frank-JMAA,  1978-Littman-ASNSP,  2014-Martin-Rosier-Rouchon-A,  2016-Martin-Rosier-Rouchon-SICON} for the one-dimensional case, as well as to   \cite{DZZ-2008, EZ-2011, MR1791879, MR1406566, Lebeau-Robbiano-CPDE} for the multi-dimensional case.  However, the study of controllability of such a geometric heat equation is extremely limited, we have only found the recent paper by Liu \cite{Liu-2018} in the literature. In \cite[Proposition 2.6]{Liu-2018} Liu has proved the local controllability of the harmonic map heat flow equation from $\Omega\subset \mathbb{R}^3$ to $\mathbb{S}^2$, his proof is based on the stereographic projection, global Carleman estimates \cite{MR1406566, MR2224822}, and an iteration scheme \cite{MR3023058} due to Liu-Takahashi-Tucsnak. Compared with  \cite[Proposition 2.6]{Liu-2018}, our result highlights the advantage of presenting the quantitative cost estimates with respect to time.
\end{remark}

Again, due to Corollary \ref{thm-expo-decay}, one may ask for global stabilization. Inspired by the topological argument introduced in \cite{Coron-Krieger-Xiang-1}, this global stabilization does not hold even if the controlled area is the whole domain, $i. e. \; \omega= \mathbb{T}^1$, as shown by the following proposition.
\begin{thm}[Obstruction to uniform asymptotic stabilization in $\mathbf{H}(2\pi)$]\label{THM-nounifdecay}
  For any  time-varying feedback law F satisfying  conditions $(\mathcal{P}1)-(\mathcal{P}4)$ (we refer to  Section \ref{subsec:stabi:pre} for precise definitions),
    the closed-loop system \eqref{closed-loop-timevarying} is not uniformly asymptotically stable in
    $\textbf{H}(2\pi)$.
\end{thm}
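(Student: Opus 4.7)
The approach is by contradiction, via a topological obstruction in the spirit of the degree argument for wave maps in \cite{Coron-Krieger-Xiang-1}.

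The first step is to convert uniform asymptotic stability into a continuous retraction onto constants. Assuming such stability holds on $\mathbf{H}(2\pi)$ with $\mathcal{KL}$-function $h$, fix $\varepsilon>0$ small and choose $T=T(\varepsilon)>0$ so that every trajectory starting in $\mathbf{H}(2\pi)$ satisfies $E(u(T,\cdot))\le \varepsilon^{2}$. Combined with Poincar\'e and the embedding $H^{1}(\mathbb{T}^{1})\hookrightarrow L^{\infty}(\mathbb{T}^{1})$, this forces $u(T,\cdot)$ to be $O(\varepsilon)$-close in $L^{\infty}$ to its spatial mean $\bar u$, so $\bar u/|\bar u|$ defines a unique nearest constant $p_{T}(u_{0})\in \mathbb{S}^{k}$. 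Conditions $(\mathcal{P}1)$--$(\mathcal{P}4)$ provide well-posedness, $H^{1}$-continuity of the closed-loop solution map, and the fact that constant states are equilibria; hence $p_{T}:\mathbf{H}(2\pi)\to \mathbb{S}^{k}$ is a continuous retraction (equal to the identity on the constants $\mathbb{S}^{k}\subset \mathbf{H}(2\pi)$).

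The second step is to derive the contradiction via a topological degree computation. For $k=1$ this is direct: the Brouwer degree of maps $\mathbb{T}^{1}\to \mathbb{S}^{1}$ is $H^{1}$-continuous and therefore preserved by the closed-loop flow; taking the initial datum $u_{0}(x)=(\cos x,\sin x)\in \mathbf{H}(2\pi)$, which has degree $1$, Cauchy--Schwarz gives $E(u(t))\ge 2\pi$ for all $t\ge 0$, contradicting $E(u(T))\le \varepsilon^{2}<2\pi$. For $k\ge 2$, where $\pi_{1}(\mathbb{S}^{k})=0$ rules out the plain degree argument, I would build a continuous family of great-circle states $\sigma:\mathbb{S}^{k}\to \mathbf{H}(2\pi)$: in spherical coordinates $v=\cos s\,e_{0}+\sin s\,w$ around a fixed pole $e_{0}$, set $\sigma(v)(x):=\cos s\,e_{0}+\sin s\,R_{w}(x)$ with $R_{w}$ tracing the great circle in the equator through $w$, so that $E(\sigma(v))=2\pi\sin^{2} s\le 2\pi$ and $\sigma$ degenerates to the constants $\pm e_{0}$ at the poles. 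Tracking the asymptotic axis of $\sigma(v)$ under the closed-loop flow via the flux method (Lemma \ref{lem:flux}), and exploiting the structure enforced by $(\mathcal{P}1)$--$(\mathcal{P}4)$, one extracts a nontrivial equivariant index for $p_{T}\circ\sigma$ incompatible with the retraction property of $p_{T}$.

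The principal difficulty is executing this degree argument for $k\ge 2$: the naive null-homotopy $\sigma_{\lambda}(v)(x):=\cos((1-\lambda)s)\,e_{0}+\sin((1-\lambda)s)\,R_{w}(x)$ lives entirely inside $\mathbf{H}(2\pi)$ and contracts $\sigma$ to the constant map $v\mapsto e_{0}$, which without further input would force $\deg(p_{T}\circ\sigma)=0$. The topological invariant delivering the contradiction must therefore be finer than the plain Brouwer degree on $\mathbb{S}^{k}$; the natural candidate is a Borsuk--Ulam-type $\mathbb{Z}/2$-equivariant index associated to the antipodal involution (pairing $v\leftrightarrow -v$ with $R_{w}\leftrightarrow R_{-w}$ composed with the circle involution $x\mapsto -x$), applied to a carefully chosen subfamily of $\sigma$. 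Identifying this refined invariant and verifying that the equivariance is preserved by the closed-loop flow under the structural hypotheses in $(\mathcal{P}1)$--$(\mathcal{P}4)$ is where the main work of the proof lies.
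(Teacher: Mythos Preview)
Your argument for $k=1$ is correct and matches the paper. The difficulty, as you recognize, is entirely in $k\ge 2$, and there your proposed fix does not work.

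The equivariant-index route is a dead end. The feedback $F$ is an arbitrary map satisfying only the regularity conditions $(\mathcal{P}1)$--$(\mathcal{P}4)$; nothing forces the closed-loop flow to commute with the antipodal involution on $\mathbb{S}^k$, with $x\mapsto -x$, or with any other symmetry. So there is no reason $p_T\circ\sigma$ should inherit the $\mathbb{Z}/2$-equivariance you want, and without that the Borsuk--Ulam machinery has no purchase. (A smaller issue: $(\mathcal{P}1)$--$(\mathcal{P}4)$ do not say constants are equilibria; you only get that zero-energy states stay at zero energy, hence $p_T|_{\mathbb{S}^k}$ is homotopic to the identity via the flow. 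This is enough for a retraction-style argument, but the main obstruction remains.) Also, Lemma~\ref{lem:flux} concerns the \emph{free} heat flow and says nothing about the closed-loop system.

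The paper's key idea is precisely \emph{not} to collapse the spatial variable $x$. Instead of a family parametrized by $\mathbb{S}^k$ and a retraction to constants, one takes a family $\gamma_{k-1}:(\mathbb{T}^1)^{k-1}\to\mathbf{H}(2\pi)$ and computes the degree of the \emph{evaluation map}
\[
A_{k-1}:(\mathbb{T}^1)^{k-1}\times\mathbb{T}^1_x\to\mathbb{S}^k,\qquad (s_1,\ldots,s_{k-1},x)\mapsto \gamma_{k-1}(s_1,\ldots,s_{k-1})(x),
\]
which can be arranged to have $\deg A_{k-1}=2^{k-1}\neq 0$ while $E(\gamma_{k-1}(s))=2\pi\prod_i\sin^2 s_i\le 2\pi$. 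The closed-loop flow on $[0,T]$, followed by a straight-line push to the nearby constant $a(s)=\Phi(T,\gamma_{k-1}(s))(0)$, gives a continuous homotopy of maps $(\mathbb{T}^1)^{k-1}\times\mathbb{T}^1_x\to\mathbb{S}^k$ from $A_{k-1}$ to a map independent of $x$, which has degree $0$. That is the contradiction. Your null-homotopy $\sigma_\lambda$ is harmless in this setup because the degree is being computed on the total space including $\mathbb{T}^1_x$, not on the parameter space alone.
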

This obstruction is due to the degree theory, which, roughly speaking, comes from the non-triviality of the homology group $H_k(\Sph^k)$. Actually,  the same arguments
 introduced in \cite[Section 3.2]{Coron-Krieger-Xiang-1} can be adapted to our framework.  For readers' convenience we sketch this proof here.
\begin{proof}[Proof of Theorem \ref{THM-nounifdecay}]
Case $k=1$ is trivial since  the degree of  $u(t, \cdot): \mathbb{T}^1\rightarrow \Sph^1$ does not depend  time. In the following we concentrate on the case $k\geq 2$. Thanks to \cite[Lemma 3.2 as well as equation (32)]{Coron-Krieger-Xiang-1}, there are  $A_{k-1}: \left(\mathbb{T}^1\right)^k\rightarrow \mathbb{S}^k$ as well as $\gamma_{k-1}: \left(\mathbb{T}^1\right)^{k-1}\rightarrow H^1(\mathbb{T}^1; \mathbb{S}^k)$ with
\begin{equation}
\gamma_{k-1}(s_1,\ldots , s_{k-1})(x):= A_{k-1}(s_1, s_2,\ldots , s_{k-1}, x) \; \forall s_1,\ldots , s_{k-1}, x \in \mathbb{T}^1  \notag
\end{equation}
 such that
\begin{equation}
\deg A_{k-1}= 2^{k-1}    \notag
\end{equation}
and that
\begin{equation}
E(\gamma_{k-1}(s_1,\ldots , s_{k-1})(\cdot))= 2\pi \sin{s_1}^2\ldots \sin{s_{k-1}}^2 \leq 2\pi\; \forall s_1,\ldots , s_{k-1}\in \mathbb{T}^1 .    \notag
\end{equation}
For instance, $A_1:  \left(\mathbb{T}^1\right)^2\rightarrow \mathbb{S}^2$ is defined as follows: (see Figure \ref{picdegree})
\begin{equation*}
A_1(s, x):=
\begin{cases}
\left(\sin s \cos x, \sin s \sin x, \cos s \right)^T \; \forall s\in [0, \pi], \forall x\in \mathbb{T}^1, \\
\left(-\sin s \cos x, \sin s \sin x, \cos s \right)^T \; \forall s\in (\pi, 2\pi), \forall x\in \mathbb{T}^1,
\end{cases}
\end{equation*}
and for $k\geq 2$ the map is successively constructed as follows, $A_k:  \left(\mathbb{T}^1\right)^{k+1}\rightarrow \mathbb{S}^{k+1}$ is defined as follows: (see Figure \ref{picdegree})
\begin{equation*}
A_k(s_1,..., s_k, x):=
\begin{cases}
\left(\sin s_1 A_{k-1}(s_2,...,s_k, x)^T, \cos s_1 \right)^T \; \forall s_1\in [0, \pi], \forall (s_2,..., s_k, x)\in \left(\mathbb{T}^1\right)^k, \\
\left(-\sin s_1 A_{k-1}(s_2,...,s_k, x)^T, \cos s_1 \right)^T \; \forall s_1\in [0, \pi], \forall (s_2,..., s_k, x)\in \left(\mathbb{T}^1\right)^k.
\end{cases}
\end{equation*}

Define the flow of the closed-loop system:
\begin{equation*}
\begin{array}{ccc}
    \Phi: \mathbb{R} \times H^1(\mathbb{T}^1;\mathbb{S}^k)&\rightarrow & H^1(\mathbb{T}^1;\mathbb{S}^k)\\
    (t, u_0)&\mapsto& u(t),
\end{array}
\end{equation*}
where $u(t)$ is the unique solution of equation \eqref{closed-loop-timevarying} with the initial state $u(0)= u_0$.
Suppose that the closed-loop system is uniformly asymptotically stable. Then, for any $\delta>0$,  there exists $T>0$ such that  $E(\Phi(T, \gamma_{k-1}(s_1,\ldots , s_{k-1})))\leq \delta \; \forall s_1,\ldots , s_{k_1}\in \mathbb{T}^1$. Thanks to the continuity   of the flow associated to the closed-loop system, for which we refer to Proposition \ref{lem:wellclosegene},
\begin{equation*}
\Phi(t, \gamma_{k-1}(s_1,\ldots , s_{k-1}))(x)\in C^0\left(\mathbb{T}^1_{s_1}\times\ldots \times \mathbb{T}^1_{s_{k-1}}; C^0([0, T]; C^0(\mathbb{T}^1; \mathbb{S}^{k}) \right).
\end{equation*}
\begin{figure}[t]
\centering
\includegraphics[width=0.99\linewidth, trim={0cm 0.0cm 0cm 0.0cm},clip]{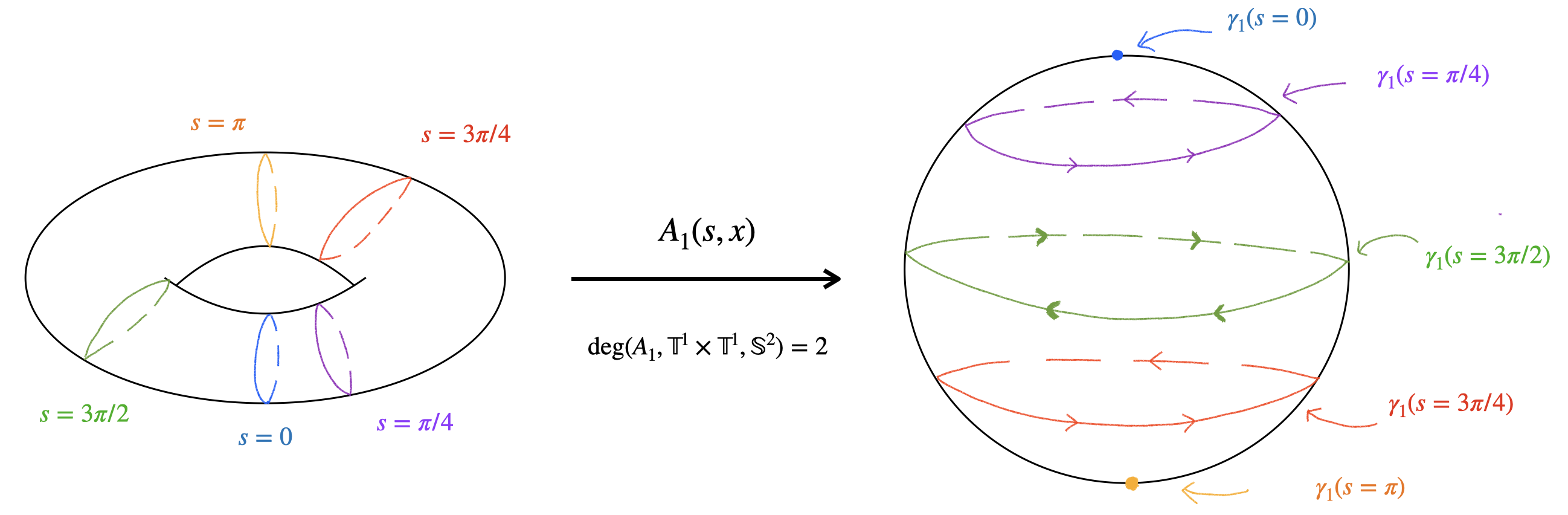}
\caption{Obstruction to uniform asymptotic stabilization. The constructed map $A_1: \mathbb{T}^1\times \mathbb{T}^1\rightarrow \mathbb{S}^2$ has nontrivial degree. This picture is  related to Theorem \ref{THM-nounifdecay}.
}
\label{picdegree}
\end{figure}

Then we define  $ G_1: [0, T+1]\times (\mathbb{T}^1)^{k-1}\times \mathbb{T}^1_x\rightarrow \mathbb{S}^{k}$ as
\begin{equation}
G_1(t; s_1,\ldots , s_{k_1}, x):=
\begin{cases}
 \Phi(t, \gamma_{k-1}(s_1,\ldots , s_{k-1}))(x)\; \forall t\in [0, T]\\
 \frac{(1-t+T)  \mathcal{H}_1(T; s_1, \ldots , s_{k-1}, x)+ (t- T) a(s)}{|(1-t+T)  \mathcal{H}_1(T; s_1, \ldots , s_{k-1}, x)+ (t- T) a(s_1, \ldots , s_{k-1})|} \; \forall t\in [T, T+1],
 \end{cases}   \notag
\end{equation}
where,  $a(s_1, \ldots , s_{k-1}):= \Phi(T, \gamma_{k-1}(s_1,\ldots , s_{k-1}))(0)$.
Note that, choosing $\delta >0$ small enough, $G_1: [0, T+1]\times (\mathbb{T}^1)^{k-1}\times \mathbb{T}^1_x\rightarrow \mathbb{S}^{k}$ is continuous. Hence,
 $ G_1(t=0): (\mathbb{T}^1)^{k-1}\times \mathbb{T}^1_x\rightarrow \Sph^k$ is homotopic to
$ G_1(t=T+1): (\mathbb{T}^1)^{k-1}\times \mathbb{T}^1_x\rightarrow \Sph^k$.  However,
\begin{align*}
\deg G_1(t=0)&= \deg A_{k-1}= 2^{k-1}, \\
\deg G_1(t=T+1)&= 0 \textrm{ as its value does not depend on $x$}.
\end{align*}
This leads to a contradiction and finishes the proof of Theorem \ref{THM-nounifdecay}.
\end{proof}

\subsubsection{\textbf{Small-time global exact controllability between harmonic maps and an open problem}}
Global exact controllability is one of the most difficult problems in control theory. For example the Lions problem on the global controllability of the Navier-Stokes equations with the no-slip Stokes condition and the optimal time for the global controllability of the semilinear wave equations are widely open.
In the case that (small-time) global (exact) controllability fails or is unknown, one continue to ask whether it is possible to deform between stationary states and whether it is possible to do so in a short time.

Actually, this natural  problem is still widely open for many important models, for instance even for the simplest one-dimensional nonlinear heat equation we do not know how to move between any stationary states in small time. More precisely, let us consider the following controlled system
\begin{equation*}
\begin{cases}
u_t- u_{xx}- u^3= 0, \\
u(t, 0)= 0,\\
u(t, L)= f(t),\\
u(0, \cdot)= u_0(\cdot),
\end{cases}
\end{equation*}
whose stationary states set is defined as
\begin{equation*}
\mathcal{S}_{H3}:= \{u\in C^2([0, L]): u_{xx}+ u^3= 0 \; \forall x\in [0, L], u(0)= 0\}.
\end{equation*}
The controllability  between stationary states in small time is still \textit{largely open} for this model. The main difficulty is that we have to deform the state as fast as we want,  otherwise the weaker property of controllability between stationary states in large time has been known for decades \cite{Coron-Trelat-SICON-2004}.  We refer to \cite[Open Problem 2]{Coron-CM-2007} for background on this important open problem.
\begin{figure}[t]
\centering
\includegraphics[width=0.9\linewidth, trim={0cm 0.0cm 0cm 0.0cm},clip]{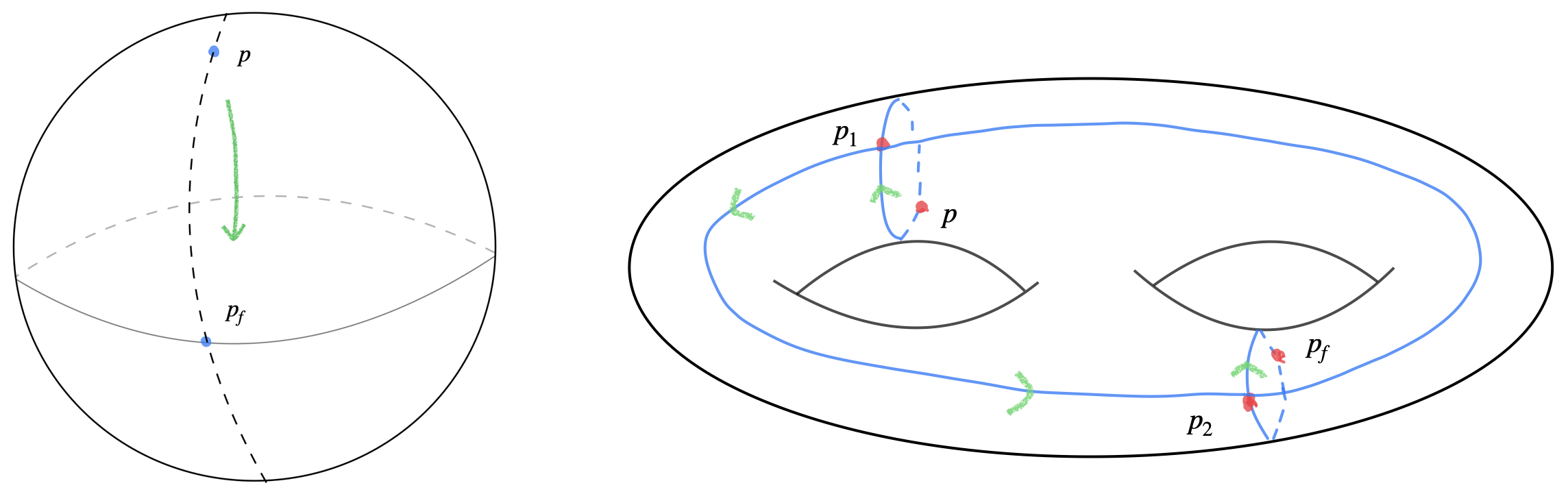}
\caption{Small-time global controllability on closed geodesics: the picture on the left shows that how to deform from a constant state $p$ to another constant state $p_f$ along geodesics on sphere. See Theorem \ref{lem:excconhm0} for details.  This picture is also related to \hyperref[Step6]{Step 6} of Section \ref{sec:strategy}.
 \protect  \\
 For general manifold targets $\mathcal{N}$, there may be no closed  geodesic that contains both $p$ and $p_f$.
 The picture on the right shows that we can  deform from a constant state $p$ to another constant state $p_f$  by steps along different closed geodesics. Alternatively, we can always find a complete geodesic, which is not necessarily a closed geodesic, that contains both $p$ and $p_f$. The idea of deforming on complete geodesics is used in the proof of Lemma \ref{heat-geodesic}. }
\label{pic5}
\end{figure}
\begin{figure}[t]
\centering
\includegraphics[width=0.99\linewidth, trim={0cm 0.0cm 0cm 0.0cm},clip]{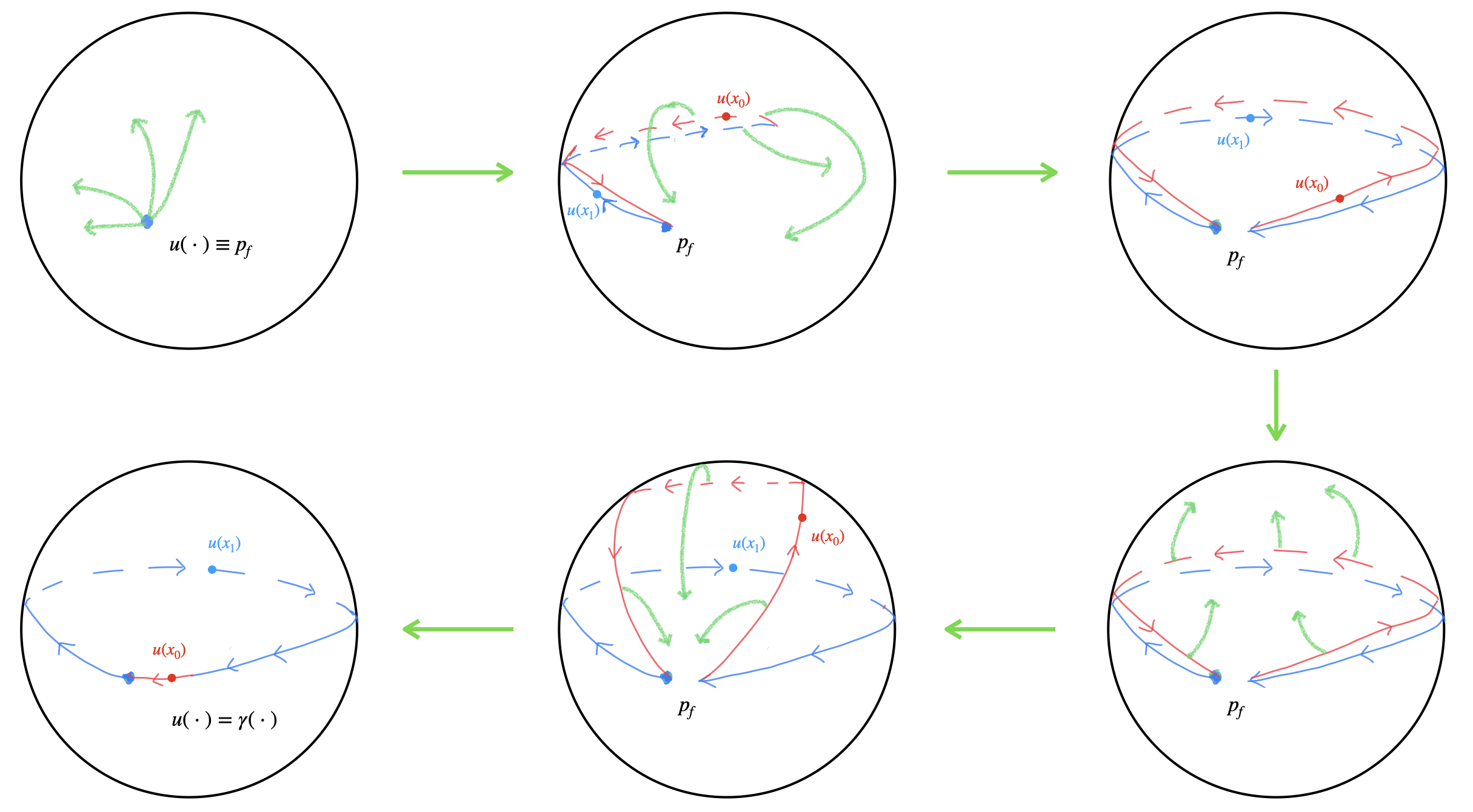}
\caption{Small-time global controllability between harmonic maps having different energy: deform from the constant state $p_f$ to a harmonic map $\gamma$ that crosses $p_f$. This is the simplest case of Theorem \ref{lem:excconhm}.  In these pictures, the blue colored curve is the value of $u(x)|_{x\in \mathbb{T}^1\setminus \omega}$, the red colored curve is the value of $u(x)|_{x\in \omega}$, and the green colored arrows represent the direction of the flow along with time. We observe that on the domain $\omega$ the speed of the curve $\dot{u}(x)$ is faster than the speed on $\mathbb{T}^1\setminus \omega$.
This picture is also related to \hyperref[Step7]{Step 7} of Section \ref{sec:strategy}.
}
\label{pic6}
\end{figure}

\begin{figure}[t]
\centering
\includegraphics[width=0.943\linewidth, trim={0cm 0.7cm 0.6cm 0cm},clip]{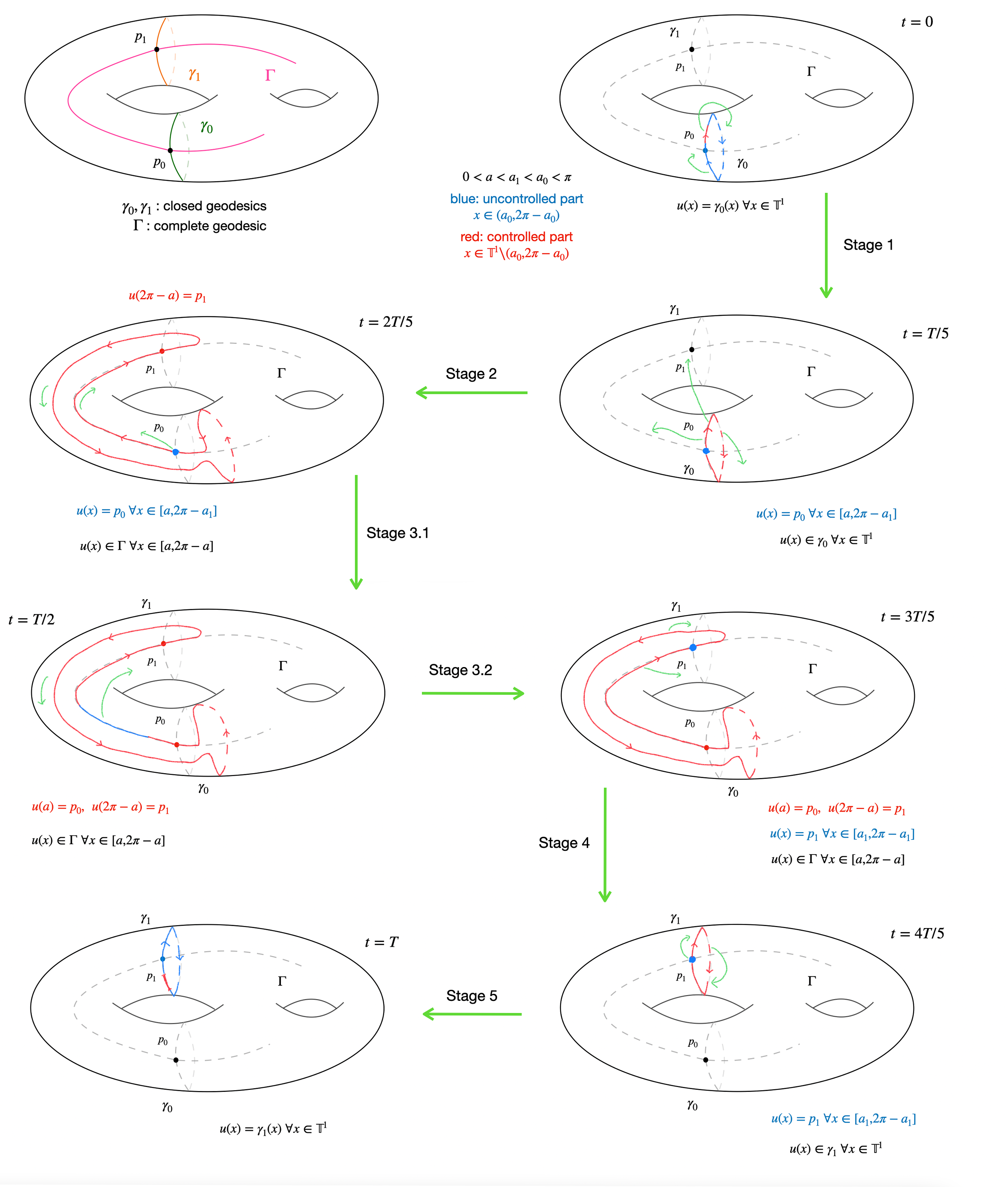}
\caption{Small-time global controllability between harmonic maps are that homotopic on general compact Riemannian manifold, Theorem \ref{th-small-time-global-harmonic} : deform from $\gamma_0$ to $\gamma_1$. The blue, red and green curves have the same meaning as in Figure \ref{pic6}.
Stage 1 and Stage 5 are based on the idea of  Theorem \ref{lem:excconhm0} to control the state on closed geodesics $\gamma_0$ and $\gamma_1$.
Stage 2 and Stage 4 are based on the idea of  Theorem \ref{lem:excconhm} to deform the state on $\mathcal{N}$ without moving the uncontrolled part.  See Figure \ref{pic6} for such a deformation on $\mathbb{S}^2$.
Stage 3 adapts  Lemma \ref{heat-geodesic} $(ii)$ and Lemma  \ref{lem:glue} to move the uncontrolled part on a given complete geodesic $\Gamma$.  }
\label{picma}
\end{figure}

Surprisingly, this global controllability in small time between stationary states has a positive answer for the harmonic map heat flow equation, thanks to geometric properties of the system.  We state three  theorems (Theorems \ref{lem:excconhm0} to \ref{th-small-time-global-harmonic}) in this direction and prove them in  Section \ref{sec:GECHM}. This new observation also works for other geometric equations with compact Riemannian target, such as the wave maps equations.  \\

\noindent (i)  \textit{Small-time global controllability on closed geodesics}

We first investigate the simplest case, namely, the harmonic map heat flow $\mathbb{T}^1\rightarrow \Sph^1$, for which, benefiting from the polar coordinates, the system is transformed into the linear heat equation with an interior control.
Next, we show that for a sphere $\Sph^k$, if the initial state is in a given closed geodesic and the control force is tangent to this geodesic, then the unique solution remains in the same closed geodesic. Moreover, under the polar coordinates, the system becomes again the controlled linear heat equation with an interior control, for which the  controllability is well-investigated.

\begin{thm}\label{lem:excconhm0}
Let $T>0$.  Let the curve $\mathcal{C}= \{\gamma(x)\in \mathbb{S}^k: x\in \mathbb{T}^1\}$ be a closed constant-speed geodesic on $\mathbb{S}^k$ with energy $2\pi$:
\begin{gather*}
   \gamma(x)\in  \mathbb{S}^k,\;
    \gamma_{xx}(x)+ |\gamma_x|^2(x) \gamma(x)= 0 \; \forall x\in \mathbb{T}^1,
   \\
   \int_{\mathbb{T}^1}|\gamma_x(x)|^2dx=2\pi.
\end{gather*}
 Let $k\geq 1$ and $N\in \mathbb{N}$.
For any initial state  $v_0(\cdot)\in H^1(\mathbb{T}^1; \mathbb{S}^k)$ with values into  $\mathcal{C}$ and satisfying
\begin{equation*}
    \deg (v_0, \mathbb{T}^1, \mathcal{C})= N,
\end{equation*}
and for any $r\in \mathbb{R}$,
there exists an explicit function $f\in L^{\infty}(0, T; L^2(\mathbb{T}^1; \mathbb{R}^{k+1}))$ such that the unique solution $u(t, x)|_{t\in (0, T), x\in \mathbb{T}^1}$ of the controlled harmonic map heat flow equation \eqref{eq:controlhmhf} with initial state $v_0$ satisfies
\begin{equation*}
    u(T, x)= \gamma(Nx+ r)\; \forall x\in \mathbb{T}^1.
\end{equation*}
\end{thm}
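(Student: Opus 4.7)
The plan is to reduce the controlled harmonic map heat flow restricted to the closed geodesic $\mathcal{C}$ to the scalar linear heat equation on $\mathbb{T}^1$, and then to invoke classical interior null controllability for the latter.

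First I would parameterize. Since $\mathcal{C}$ is a closed constant-speed geodesic on $\mathbb{S}^k$ with energy $2\pi$, the map $\gamma:\mathbb{R}\to\mathbb{S}^k$ satisfies $|\gamma'|\equiv 1$, $\gamma(s+2\pi)=\gamma(s)$, and $\gamma''=-\gamma$ (this is the geodesic equation on the sphere with unit-speed parametrization). Then $\gamma:\mathbb{R}\to\mathcal{C}$ is a smooth covering map. Because $v_0:\mathbb{T}^1\to\mathcal{C}$ has degree $N$ and lies in $H^1$, standard lifting theory gives a (essentially unique) map $\theta_0\in H^1_{\mathrm{loc}}(\mathbb{R})$ with $v_0(x)=\gamma(\theta_0(x))$ and $\theta_0(x+2\pi)=\theta_0(x)+2\pi N$. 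Writing $\theta_0(x)=Nx+\widetilde\theta_0(x)$ gives $\widetilde\theta_0\in H^1(\mathbb{T}^1)$.

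Second, I would make the ansatz $u(t,x)=\gamma(\theta(t,x))$ and compute. Using $|\gamma'|=1$ and $\gamma''=-\gamma$, a direct differentiation yields
\begin{equation*}
u_t-u_{xx}-|u_x|^2 u \;=\; \gamma'(\theta)\bigl(\theta_t-\theta_{xx}\bigr),
\end{equation*}
because the two ``normal'' contributions, namely $-\theta_x^2\gamma''=\theta_x^2\gamma$ from $-u_{xx}$ and $-\theta_x^2\gamma$ from $-|u_x|^2 u$, cancel. Since $\gamma'(\theta)\in T_{u}\mathbb{S}^k$, choosing the control force in the form $f(t,x):=F(t,x)\gamma'(\theta(t,x))$ for some scalar $F$ makes $f^{u^\perp}=f$, and the controlled harmonic map heat flow \eqref{eq:controlhmhf} is equivalent, along this ansatz, to the scalar equation
\begin{equation*}
\theta_t-\theta_{xx}=\mathbf{1}_\omega F \quad\text{on }(0,T)\times\mathbb{R}.
\end{equation*}
Substituting $\theta(t,x)=Nx+\psi(t,x)$ kills the linear-in-$x$ term (it is harmonic in both $t$ and $x$), so $\psi$ solves the \emph{same} linear heat equation on $\mathbb{T}^1$ with periodic boundary conditions, with $\psi(0,\cdot)=\widetilde\theta_0\in H^1(\mathbb{T}^1)$ and target $\psi(T,\cdot)\equiv r$.

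Third, the problem of steering $\psi(0,\cdot)=\widetilde\theta_0$ to $\psi(T,\cdot)=r$ using an interior control supported in $\omega$ is the classical null controllability of the linear heat equation on the torus: applying it to $\phi:=\psi-r$ with initial datum $\widetilde\theta_0-r$ yields an $L^\infty(0,T;L^2)$ control $F$ driving $\phi$ to $0$ in time $T$; see \cite{Lebeau-Robbiano-CPDE, MR1406566}. Setting $f:=F\gamma'(\theta)$ produces an explicit $L^\infty(0,T;L^2(\mathbb{T}^1;\mathbb{R}^{k+1}))$ control. Because $\gamma$ is smooth and $\theta\in C([0,T];H^1)$, the map $u(t,x):=\gamma(Nx+\psi(t,x))$ belongs to the solution class, and uniqueness of the Cauchy problem for \eqref{eq:controlhmhf} (classical for the semilinear heat system with this quadratic nonlinearity under $H^1$-data and $L^\infty_t L^2_x$ force) identifies it with the actual solution. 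At time $T$, $u(T,x)=\gamma(Nx+r)$, as required.

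The only subtle points, neither of which is a deep obstacle, are: (i) making sure the lift $\theta_0$ is well defined with the claimed winding, which is immediate because $\gamma:\mathbb{R}\to\mathcal{C}$ is a universal cover and the degree assumption fixes the winding; and (ii) checking that the reduced equation on $\mathbb{T}^1$ really is the classical heat equation, for which the cancellation using $\gamma''=-\gamma$ and $|\gamma'|=1$ is the crucial geometric observation. Everything else is standard linear heat controllability.
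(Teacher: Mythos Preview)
Your proof is correct and follows essentially the same approach as the paper: reduce to an angular variable on the geodesic so that the controlled harmonic map heat flow becomes the scalar linear heat equation $\theta_t-\theta_{xx}=\mathbf{1}_\omega F$, subtract the affine part $Nx+r$, and invoke interior null controllability of the heat equation on $\mathbb{T}^1$. The only cosmetic difference is that the paper first rotates $\gamma$ to the standard equator $\varphi(x)=(\cos x,\sin x,0,\ldots,0)^T$ before writing polar coordinates, whereas you work directly with $\gamma$ via the identities $|\gamma'|=1$ and $\gamma''=-\gamma$; the computations and the key cancellation are identical.
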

The proof of this theorem is based on the writing of the harmonic map heat flow in polar coordinates and the controllability of the 1-D linear heat equation. It can be found in  Section \ref{sec:GECHM}.\\

\noindent (ii)  \textit{Small-time global  controllability between harmonic maps having different energy}

\begin{thm}\label{lem:excconhm}
Let $T>0$.  Let the curve $\mathcal{C}= \{\gamma(x)\in \Sph^k: x\in \mathbb{T}^1\}$ be a closed constant-speed geodesic on $\mathbb{S}^k$ with energy $2\pi$:
\begin{gather*}
   \gamma(x)\in  \mathbb{S}^k,\;
    \gamma_{xx}(x)+ |\gamma_x|^2(x) \gamma(x)= 0 \; \forall x\in \mathbb{T}^1,
\\
 \int_{\mathbb{T}^1}|\gamma_x(x)|^2dx=2\pi.
\end{gather*}
Let $k\geq 2$.
Let   $v_0(\cdot)\in H^1(\mathbb{T}^1; \mathbb{S}^k)$ be an initial state and  $v_1(\cdot)\in H^1(\mathbb{T}^1; \mathbb{S}^k)$ be a target state both with values into  $\mathcal{C}$. Assume that there exists $N_1\in \mathbb{Z}$ and $r\in \mathbb{R}$ such that
\begin{gather*}
    v_1(x)= \gamma(N_1 x+ r) \; \forall x\in \mathbb{T}^1.
\end{gather*}
 Then, there exists an explicit function $f\in L^{\infty}(0, T; L^2(\mathbb{T}^1))$  such that the unique solution $u(t, x)|_{t\in (0, T), x\in \mathbb{T}^1}$ of the controlled harmonic map heat flow equation \eqref{eq:controlhmhf} with initial state $v_0$ satisfies
\begin{equation*}
    u(T, x)= v_1(x)\; \forall x\in \mathbb{T}^1.
\end{equation*}
\end{thm}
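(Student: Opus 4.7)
The plan is to reduce to the same-degree situation of Theorem \ref{lem:excconhm0} by using $k\geq 2$ (so that $\pi_1(\mathbb{S}^k)=0$) to change the winding number through an off-geodesic deformation localized inside $\omega$. Set $N_0:=\deg(v_0,\mathbb{T}^1,\mathcal{C})$; if $N_0=N_1$, Theorem \ref{lem:excconhm0} directly produces the desired control in time $T$, so I assume $N_0\neq N_1$ and partition $[0,T]$ into three subintervals on which to execute three stages.

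On $[0,T/3]$ I would apply Theorem \ref{lem:excconhm0} to steer $v_0$ to the canonical harmonic map $\gamma(N_0 x+r_0)$, for a shift $r_0$ chosen so that the endpoint values on a connected component $(a,b)\subset\omega$ are convenient; on $[2T/3,T]$ I would apply Theorem \ref{lem:excconhm0} again, with degree $N_1$ and shift $r$, to steer the $\mathcal{C}$-valued degree-$N_1$ state produced by Stage II to $v_1=\gamma(N_1 x+r)$.

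The heart of the argument is Stage II on $[T/3,2T/3]$, where I would build an explicit reference trajectory
\begin{equation*}
u^*(t,x)=\begin{cases} \gamma(N_0 x+r_0), & x\in \mathbb{T}^1\setminus\omega,\\ \Psi(s(t),x), & x\in \omega,\end{cases}
\end{equation*}
with $s(t)$ a smooth ramp from $0$ to $1$ and $\Psi:[0,1]\times\omega\to\mathbb{S}^k$ a smooth homotopy of paths sharing the fixed endpoints $\gamma(N_0 a+r_0)$ and $\gamma(N_0 b+r_0)$. At $s=0$, $\Psi(0,\cdot)=\gamma(N_0\cdot+r_0)|_\omega$; at $s=1$, $\Psi(1,\cdot)$ is a path in $\mathcal{C}$ whose winding around $\mathcal{C}$ is chosen so that the spliced map has total degree $N_1$ on $\mathcal{C}$. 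Existence of $\Psi$ rests on $\pi_1(\mathbb{S}^k)=0$: the initial and target paths share endpoints but differ in winding around $\mathcal{C}$ by the integer $N_1-N_0$, and this discrepancy can be absorbed by an off-geodesic homotopy passing through a pole of $\mathbb{S}^k$ transverse to $\mathcal{C}$. Because $\gamma(N_0\cdot+r_0)$ is a stationary harmonic map, $u^*$ solves the free equation on $\mathbb{T}^1\setminus\omega$, and setting
\begin{equation*}
f(t,x):=\mathbf{1}_\omega(x)\bigl(\partial_t u^* -\Delta u^* -|\partial_x u^*|^2 u^*\bigr)(t,x),
\end{equation*}
which is automatically tangent to $\mathbb{S}^k$ and supported in $\omega$, makes $u^*$ the unique solution of \eqref{eq:controlhmhf} carrying $\gamma(N_0 x+r_0)$ to a $\mathcal{C}$-valued state of degree $N_1$.

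The main obstacle will be the regularity of $u^*$ at $\partial\omega$: to guarantee $u^*\in C([0,T];H^1)\cap L^2(0,T;H^2)$ and $f\in L^\infty(0,T;L^2)$, the homotopy $\Psi$ must match $\gamma(N_0\cdot+r_0)$ to sufficient order at $x=a,b$ uniformly in $s\in[0,1]$. I would handle this by concentrating the winding change in a strict subinterval $I\Subset\omega$ and letting $\Psi=\gamma(N_0\cdot+r_0)$ on $\omega\setminus I$, so that the deformation has smooth, compactly supported transition zones inside $\omega$; scaling the ramp $s(t)$ then compresses the construction into any desired small time $T$, at the price of a larger $L^2$ norm of $f$.
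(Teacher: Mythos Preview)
Your proposal is correct and follows essentially the same approach as the paper: reduce to the linear heat equation via polar coordinates on the geodesic $\mathcal{C}$, and use $\pi_1(\mathbb{S}^k)=0$ for $k\geq 2$ to perform an off-geodesic homotopy supported in $\omega$ that changes the winding number, defining the control as the residual $f=u^*_t-\Delta u^*-|u^*_x|^2u^*$ (which you correctly observe is automatically tangent and supported in $\omega$).

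The only organizational difference is that the paper uses two stages rather than your three. Instead of first normalizing to $\gamma(N_0x+r_0)$ and then doing the off-geodesic homotopy, the paper's first (polar-coordinate) stage steers $v_0$ directly to $u_1=\gamma\circ\theta_1$ where $\theta_1$ is a degree-$N_0$ lift that is designed to already equal $N_1x$ on $\mathbb{T}^1\setminus\omega$; the discrepancy from the target is then entirely concentrated in $\omega$, so a single off-geodesic deformation (your Stage~II) finishes the job and your Stage~III becomes unnecessary. Your extra normalization stage is harmless and arguably makes the construction of the homotopy $\Psi$ cleaner, while the paper's two-stage version is slightly more economical. Your handling of the regularity at $\partial\omega$ (concentrating the deformation in $I\Subset\omega$) is exactly what the paper does implicitly.
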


\noindent (iii)  \textit{General compact Riemannian targets}

Finally one can replace $\mathbb{S}^k$-target by a general compact Riemannian submanifold $\mathcal{N}$ of $\R^m$.
Indeed, one has the following general result on the controlled harmonic maps heat flow,
\begin{equation}
    u_t- u_{xx}= \beta_u (u_x, u_x)+  \mathbf{1}_{\omega} f^{u^{\perp}},
\end{equation}
 where $f^{u^{\perp}}$  represents now the projection of $f$ onto the tangent space $T_u \mathcal{N}\subset \R^m$.

\begin{thm}
\label{th-small-time-global-harmonic}
Let $T>0$. Let $\gamma_0:\mathbb{T}^1\rightarrow \mathcal{N}$ and $\gamma_1:\mathbb{T}^1\rightarrow \mathcal{N}$ be two harmonic maps which are in the same connected component of $C^0(\mathbb{T}^1; \mathcal{N})$. Then there exists a control $f\in L^{\infty}(0, T; L^2(\mathbb{T}^1))$ which steers the harmonic map heat flow  from $\gamma_0$ to $\gamma_1$ during the time interval $[0,T]$.
\end{thm}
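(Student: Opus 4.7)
The plan is to follow the five-stage decomposition indicated in Figure \ref{picma}, reducing the problem on the general compact Riemannian manifold $\mathcal{N}$ to a sequence of one-dimensional linear heat controllability problems along geodesics of $\mathcal{N}$. The guiding principle is that, whenever the state $u(t,\cdot)$ takes values in (the image of) a single geodesic $\Gamma\subset\mathcal{N}$ and the control force is chosen tangent to $\Gamma$, the harmonic map heat flow, written in the arc-length coordinate along $\Gamma$, collapses to the one-dimensional linear heat equation on $\mathbb{T}^1$ with interior control in $\omega$, whose small-time controllability is classical. In particular, the constructions behind Theorems \ref{lem:excconhm0} and \ref{lem:excconhm} are not intrinsically tied to $\mathbb{S}^k$ but apply whenever the trajectory can be confined to a single geodesic submanifold; what remains is to glue together transitions between different geodesics.

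Concretely, I would split $[0,T]$ into five subintervals and execute them in order. In Stage 1, starting from the harmonic map $\gamma_0$, whose image is a closed geodesic of $\mathcal{N}$, the argument of Theorem \ref{lem:excconhm0} applied inside this geodesic produces a constant state $p_0\in\gamma_0(\mathbb{T}^1)$. Symmetrically, in Stage 5 a constant state $p_1\in\gamma_1(\mathbb{T}^1)$ is deformed into $\gamma_1$ by running the same argument. Stages 2 and 4 use the idea of Theorem \ref{lem:excconhm}: the state is deformed inside $\omega$ while the uncontrolled part $u(t,x)$ for $x\in\mathbb{T}^1\setminus\omega$ is kept fixed, so that at the end of Stage 2 the state coincides with $p_0$ off $\omega$ but already lies on a preselected complete geodesic $\Gamma\subset\mathcal{N}$ joining $p_0$ to $p_1$ (such $\Gamma$ exists by compactness of $\mathcal{N}$ and Hopf--Rinow). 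Finally, Stage 3, the essential new ingredient, uses Lemma \ref{heat-geodesic}(ii) together with the gluing Lemma \ref{lem:glue} to transport the whole state along $\Gamma$ so that its uncontrolled part is driven from $p_0$ to $p_1$; in arc-length coordinates along $\Gamma$ this becomes a small-time linear heat steering problem between two constant targets.

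The main obstacle will be Stage 3. Unlike in Theorem \ref{lem:excconhm0}, a complete geodesic $\Gamma$ in a general compact manifold need not be closed and its arc-length parameter is typically non-periodic; one must therefore confine the trajectory to a genuine tube around $\Gamma$ and control the normal components so that the reduction to a scalar linear heat equation remains valid throughout the stage, which is exactly the content of Lemma \ref{heat-geodesic}(ii). A secondary check is that the five-stage path stays inside the homotopy class of $\gamma_0$ in $C^0(\mathbb{T}^1;\mathcal{N})$: Stages 1 and 5 stay within the images of $\gamma_0$ and $\gamma_1$ respectively, Stages 2 and 4 only involve contractible loops through constant states, and Stage 3 moves a constant state along $\Gamma$, so the concatenated path matches the homotopy assumption relating $\gamma_0$ and $\gamma_1$. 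Once these points are settled, concatenating the five controls, each bounded in $L^\infty(0,T/5;L^2(\mathbb{T}^1))$ by the standard cost estimates for the linear heat equation on a finite interval, yields the required $f\in L^\infty(0,T;L^2(\mathbb{T}^1))$.
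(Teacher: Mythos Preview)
Your five-stage outline matches the paper's structure, but Stage~1 (and by symmetry Stage~5) has a genuine gap: you claim that the argument of Theorem~\ref{lem:excconhm0} steers $\gamma_0$ to a \emph{constant} state $p_0$. This is impossible whenever $\gamma_0$ is not null-homotopic in $\mathcal{N}$ (e.g.\ a primitive closed geodesic on a torus). The controlled flow $u\in C^0([0,T];H^1(\mathbb{T}^1;\mathcal{N}))$ preserves the free homotopy class of $u(t,\cdot)$, so if $[\gamma_0]\neq 0$ in $\pi_1(\mathcal{N})$ no constant state is reachable. Theorem~\ref{lem:excconhm0} itself reflects this: it only lets you reach $\gamma(Nx+r)$ with the \emph{same} degree $N$, not a constant. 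Your own homotopy check in the last paragraph would in fact detect the failure: if $\gamma_0\sim\gamma_1$ are non-contractible, a path through constant states would leave the common homotopy class.

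The paper's remedy is exactly the missing idea: one never passes through genuinely constant states. Instead, Stage~1 ends at a state $v_1$ with values in $\gamma_0(\mathbb{T}^1)$ that is constant equal to $p_0$ only on the large subinterval $[a,2\pi-a_1]$ (containing the uncontrolled region), while all the topological winding of $\gamma_0$ is concentrated in the control region $\omega$. Stages~2 and~4 then modify the state \emph{only inside} $\omega$ so that the portion over $[a,2\pi-a]$ lies on the complete geodesic $\Gamma$, and Stage~3 applies Lemma~\ref{heat-geodesic}(ii) on the interval $[a,2\pi-a]$ with Dirichlet data $p_0,p_1$, extending to $\mathbb{T}^1$ via Lemma~\ref{lem:glue} in the prescribed homotopy class. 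Once you replace ``constant state'' by ``state that is constant on the uncontrolled part, with the winding pushed into $\omega$'', your plan becomes correct and coincides with the paper's proof.
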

We refer to Section \ref{sebsec:genR} and Figure \ref{picma} for the proof of this result.

\subsubsection{\textbf{Global controllability  of the harmonic map heat flow}}
 Finally we deal with the global controllability result. Let us recall that when $k=1$ the global controllability with $\mathbb{T}^1$-target is answered by Theorem \ref{lem:excconhm0}. For  $k\geq 2$, one has the following theorem.

\begin{thm}[Global controllability]\label{thm:globalcontrol}

Let $k\geq 2$. Let $M>0$. There exists some   constant $T= T(M)>0$ such that for any initial state $u_0\in H^1(\mathbb{T}^1; \mathbb{S}^k)$ belongs to $E(M)$, for  any target state $u_f\in H^1(\mathbb{T}^1; \mathbb{S}^k)$ that is a harmonic map, we can  construct an explicit control $f\in L^{\infty}(0, T; L^2(\mathbb{T}^1))$ such that the unique solution of the controlled harmonic map heat flow
    \begin{equation*}
    \begin{cases}
            \partial_t u- \Delta u- |\partial_x u|^2 u= \mathbf{1}_{\omega} f^{u^{\perp}}, \\
            u(0, \cdot)= u_0(\cdot),
  \end{cases}
\end{equation*}
 satisfies $u(T, \cdot)= u_f(\cdot)$.
\end{thm}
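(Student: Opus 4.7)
The plan follows the strategy illustrated in Figures \ref{pic2}--\ref{picma}: first use the free heat flow to drive $u_0$ into a small $H^1$-neighbourhood of some harmonic map; next, exploit local null-controllability to land exactly on a constant map; finally, apply the small-time global controllability between harmonic maps to reach $u_f$.

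\emph{Step 1 (free dissipation to an approximate harmonic map).} Apply Corollary \ref{cor-uniform} with $f\equiv 0$: there exists a uniform time $T_1=T_1(M,\varepsilon)$ and a harmonic map $\Phi_0:\mathbb{T}^1\to\mathbb{S}^k$ such that $\|u(t_1)-\Phi_0\|_{H^1}\leq\varepsilon$ at some $t_1\in(0,T_1)$. Harmonic maps $\mathbb{T}^1\to\mathbb{S}^k$ are either constants or constant-speed closed geodesics traversed $N$ times, with energies $2\pi N^2$; and since energy dissipates along the free flow, only the finitely many levels with $2\pi N^2\leq M$ can occur as candidates for $\Phi_0$.

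\emph{Step 2 (reduction to a constant).} If $\Phi_0$ equals a constant $p_*$, choose $\varepsilon\leq e^{-C/\tau}$ in Step 1 and apply Theorem \ref{thm:nullcontrol} to steer $u(t_1)$ exactly to $p_*$ over a short time $\tau$. Otherwise $\Phi_0$ is a non-constant closed geodesic. For $k\geq 2$ such a geodesic is an unstable critical point of the Dirichlet energy (it has positive Morse index, the negative modes being perturbations normal to the geodesic inside $\mathbb{S}^k$). We apply a short, explicit localized control in such a normal direction to push $u$ off the stable manifold of $\Phi_0$; the free flow then carries the state strictly downward in energy. Iterating Step 1 encounters at most finitely many new candidate $\Phi_0$'s before the energy drops below $2\pi$. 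Once we are in $\mathbf{H}(2\pi-\eta)$, Corollary \ref{thm-expo-decay} yields exponential decay to some constant $p_*$; entering the $\varepsilon$-neighbourhood of $p_*$, we conclude by Theorem \ref{thm:nullcontrol} as above.

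\emph{Step 3 (global deformation to $u_f$).} Since $k\geq 2$ we have $\pi_1(\mathbb{S}^k)=0$, so the constant $p_*$ and the harmonic map $u_f$ lie in the same connected component of $C^0(\mathbb{T}^1;\mathbb{S}^k)$. Theorem \ref{th-small-time-global-harmonic} applied with $\mathcal{N}=\mathbb{S}^k$ produces a final control on a short subinterval that steers $p_*$ exactly to $u_f$. Concatenating the three controls yields the desired $f\in L^{\infty}(0,T; L^2(\mathbb{T}^1))$ with $T=T(M)$.

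The main obstacle is Step 2: producing a quantitative, explicit control that destabilises a non-constant $\Phi_0$ in a time depending only on $M$, uniformly over the finitely many geodesics the flow can approach. One approach is to avoid the destabilisation argument altogether by establishing local null-controllability \emph{at every harmonic map} (not just at constants) by combining stereographic projection centred at a point of $\Phi_0$ with the frequency Lyapunov method used in Theorem \ref{thm-rapid-stab}; then Step 2 collapses to a direct application of this local result once Step 1 has produced the closeness $\|u(t_1)-\Phi_0\|_{H^1}\leq \varepsilon$. The remaining ingredients are then a bookkeeping assembly of Corollary \ref{cor-uniform}, Theorem \ref{thm:nullcontrol}, and Theorem \ref{th-small-time-global-harmonic}.
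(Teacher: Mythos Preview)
Your overall architecture coincides with the paper's seven-step strategy (Section~\ref{sec:strategy}): free dissipation toward approximate harmonic maps, a mechanism to cross the critical energy levels $2\pi N^2$, iteration down to a constant, local null controllability to land exactly on a point, and finally small-time controllability between harmonic maps. Your Step~3 is exactly the paper's Steps~6--7 (Theorems~\ref{lem:excconhm0}--\ref{lem:excconhm}, subsumed by Theorem~\ref{th-small-time-global-harmonic}).

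The gap is your Step~2. Your heuristic (``push off the stable manifold in a normal direction, then the free flow decreases energy'') is morally correct but not quantitative: you need to guarantee, for every state $\varepsilon$-close to a harmonic map of level $2\pi N^2$, a control on $\omega$ that lands you at energy $\leq 2\pi N^2-\delta$ for some $\delta=\delta(N)>0$ depending only on $N$ (hence only on $M$). The paper supplies exactly this tool as Proposition~\ref{Prop:pse}, proved by a power series expansion (Lemma~\ref{lem_decrea}): writing $\bar u=\varphi(Nx)+\varepsilon \bar u_1+O(\varepsilon^2)$ and choosing the control $\varepsilon f_1$ in the third coordinate (normal to the geodesic plane), one computes $E(T)-E(0)=-2\pi N^2\varepsilon^2+O(\varepsilon^3)$. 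Combined with rotation invariance and continuous dependence, this yields the uniform drop you need. So your intuition is right, and the paper's construction is precisely a quantitative implementation of it.

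Your alternative proposal --- extend Theorem~\ref{thm:nullcontrol} to local null controllability \emph{around every harmonic map} --- is not what the paper does and would require substantially more work: the stereographic projection used in Section~\ref{sec:local-control-stabilization} linearizes nicely around a constant because the nonlinear terms are cubic in $v$, but around a non-constant geodesic the linearized operator has a nontrivial potential and the frequency Lyapunov construction of Lemma~\ref{lem-rapid-stab} would have to be redone with that potential. The paper sidesteps this entirely by only ever invoking local null controllability near constants.
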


\begin{remark}
It is noteworthy that the time required to control the system only depends on the scale of the initial state, namely the energy of the target state can be as large as we want. This is the  consequence of the small-time global exact controllability between harmonic maps, which will be shown in Section \ref{sec:GECHM} Lemma \ref{lem:excconhm0}.  However, it is not clear whether we can obtain the stronger small-time global null controllability result.
\end{remark}

\subsection{The strategy for the proof of the global controllability result (Theorem  \ref{thm:globalcontrol}) }\label{sec:strategy}

Our proof is divided into several steps,  in which we benefit from different properties of the controlled harmonic map heat flow system, some of which are illustrated in the preceding context such as the convergence of the flow to harmonic maps, while the other properties will be explored later on.
\begin{itemize}
\item[\label{Step1} Step 1.]  {\it Benefit on the natural dissipation}.
\\
   In this step we do not add any specific control and let the system evolve itself for a period with length $T_1$ to be chosen later on
\begin{equation*}
    \begin{cases}
            \partial_t u- \Delta u- |\partial_x u|^2 u= 0, \; t\in (0, T_1), \\
            u(t= 0, \cdot)= u_0(\cdot).
  \end{cases}
\end{equation*}
Suppose that the initial state $ u_0(\cdot)$ belongs to  $ \mathbf{H}(J_0 2\pi- \delta_0)$, where $J_0$ is some integer. Due to the asymptotic property Proposition \ref{thm:homopotyconver}, after some time of evolution $T_1$ the solution becomes a ``$\varepsilon$-approximate harmonic map" and is therefore close to a harmonic map $u_1$ satisfying $E(u_1)= J_1 2\pi$ for some integer  $J_1$ that is strictly smaller than $ J_0$.   This proposition is proved in Section  \ref{sec:globalconvergence}.  See the  picture on the left of Figure \ref{pic2}.
\item[\label{Step2}  Step 2.]   {\it Across the critical energy level set}. \\ Because the harmonic map $u_1$ has energy $J_1 2\pi$, in general for the state $u(t= T_1)$ sufficiently  close to  $u_1$  the energy of the flow  can not pass the energy level set  $J_1 2\pi$ without any extra control force. In the case that $u_1$ is non-trivial, inspired by the recent work on wave maps \cite{Coron-Krieger-Xiang-1} and based on the power series expansion method,  we construct an explicit control $f(t)|_{t\in [T_1, T_1+ \tau_1]}= f_1(t)|_{t\in [T_1, T_1+ \tau_1]}$ with $\tau_1$  small such that the solution of
    \begin{equation*}
            \partial_t u- \Delta u- |\partial_x u|^2 u= \mathbf{1}_{\omega} f_1^{u^{\perp}},  \; t\in (T_1, T_1+\tau_1),
\end{equation*}
which is equal to $u(t= T_1)$ at time $T_1$ satisfies $E(u(t= T_1+ \tau_1))< E(u_1)$.
Thus $u(t= T_1+ \tau_1)\in \mathbf{H}((J_1) 2\pi- \delta_1)$ for some $\delta_1>0$.  This construction is provided by  Proposition \ref{Prop:pse} which will be shown in Section \ref{subsec:pse}. Otherwise, there is no need to add other specific control. See Figure \ref{pic3}.
 \item[\label{Step3} Step 3.]  {\it Iterate the preceding two-step procedure}. \\  We iterate the process, which yields
 \begin{gather*}
 u(t= T_i) \textrm{ is close to the harmonic map } u_i,\\
  \textrm{ where } E(u_i)= J_i 2\pi, \textrm{ for } i\in \{1, 2,\ldots , K\}, \\
  J_0> J_1>J_2>\ldots > J_K= 0,
 \end{gather*}
 until at time $t= T_K$ such that $u(t= T_K)$ is close to a trivial harmonic map $u_K$.  We shall keep in mind that the values of $T_1, T_2,\ldots , T_K$ may become large but only depend on the value of $E(u_0)$.
  \item[\label{Step4} Step 4.] {\it Exponential decay of the energy}.  \\ Again we do not add any specific control.
  Since the state is sufficiently small, thanks to Corollary \ref{thm-expo-decay}  the energy decreases exponentially with rate $1/(2\pi^2)$, for any $\varepsilon>0$ there exists a  certain time $t= T_K+  t_d$ such that
 \begin{equation}
 \|u(t= T_K+ t_d)- p\|_{H^1}\leq \varepsilon,  \notag
\end{equation}
for some $p\in \mathbb{S}^k$. See Figure \ref{pic2} (the  picture on the right).
 \item[\label{Step5} Step 5.]   {\it Local null controllability}. \\ We adapt the local null controllability result, Theorem  \ref{thm:nullcontrol}, to find some control $$f(t)|_{t\in (T_K+ t_d, T_K+ t_d+ \tilde T)}= \tilde f(t)|_{t\in (T_K+ t_d, T_K+ t_d+ \tilde T)}$$ that steers the state from $u(t= T_K+ t_d)$ to $u(t= T_K+ t_d+ \tilde T)= p$,
\begin{equation*}
            \partial_t u- \Delta u- |\partial_x u|^2 u= \mathbf{1}_{\omega} \tilde f^{u^{\perp}},  \; t\in (T_K+ t_d, T_K+ t_d+\tilde T).
\end{equation*}
The proof of Theorem  \ref{thm:nullcontrol} is shown in Section \ref{sec:local-control-stabilization}, which is further divided by three stages: in the first stage we obtain quantitative rapid stabilization result Lemma \ref{lem-rapid-stab}, namely for any given decay rate $\lambda$ we construct an explicit feedback control such that the energy exponential dissipation of is enhanced to this decay rate, then in the second stage we  cut the time interval by infinite many pieces $[0, T]= \bigcup_{n\in \mathbb{N}}[T_n, T_{n+1}]$ and adapt an infinite-step iteration scheme, finally using quantitative estimates we show that the unique solution converges to 0 at time $T$  and  get the null controllability result.
See Figure \ref{pic4}.
 \item[\label{Step6} Step 6.] {\it Move from any constant state to any other constant state}. \\ Suppose that the final state is a harmonic map $u_f$, and
  that a point $p_f$ belongs to $u_f$.  We can find   some control in a short  period  $f(t)|_{t\in (T_K+ t_d+ \tilde T, T_K+ t_d+ \tilde T+ \tilde T_1)}= \tilde f(t)|_{t\in (T_K+ t_d+ \tilde T, T_K+ t_d+ \tilde T+ \tilde T_1)}$  to drive the state from $p$ to $p_f$. This task can be achieved via two different approaches. \\  Due to the  local controllability result around points, Theorem  \ref{thm:nullcontrol},  we  find a sequence of points $\{p_i: i= 0, 1, \ldots , n\}$ satisfying $p_0= p$ and $p_n= p_f$ such that one can move the solution from $p_i$ to $p_{i+1}$ in a short time for each $i$.\\
  Alternatively, we define a closed geodesic $\mathcal{C}$ (an equator) that connects  $p$ and $p_f$. By restricting the evolution of the system on this equator, which is possible if the initial state belongs the equator and if the control stays in the subtangent space $T\mathcal{C}\subset T\mathbb{R}^{k+1}$. According to Theorem \ref{lem:excconhm0} that will be proved in Section \ref{sec:GECHM},  this inhomogeneous geometric heat equation with  constraint becomes   the linear inhomogeneous heat equation without  constraint. Thus the state $p_f$ is a  reachable state from $p$.  Notice that by restricting the flow in the geodesic $\mathcal{C}$ the degree of the flow $u(t): \mathbb{T}^1\rightarrow \mathcal{C}$ cannot be modified with respect to time.  This gives a  necessary condition for two states that can be deformed from one to another. However,  one shall also note that this topological condition is not a  sufficient condition to find the reachable set due to the strong smoothing effect of the heat equation.  See Figure \ref{pic5}.
  \item[\label{Step7} Step 7.]  {\it Move from any constant state to any harmonic map that passes through this constant}. \\   Relying on the previous step based on  Theorem \ref{lem:excconhm0}  one can not change the degree of the flow if it is restricted to a closed geodesic. In this final step we show that, with the help of Theorem \ref{lem:excconhm} which is proved in Section \ref{sec:GECHM}, during a short period one can move the state from one harmonic map to another even if their degree do not coincide but $k\geq 2$.  This finishes the proof of Theorem \ref{thm:globalcontrol}. See Figure \ref{pic6}.
\end{itemize}

\subsection{The organization of the paper}
This paper is organized as follows. After the preliminary Section \ref{sec:prep}, all the proofs of technical theorems can be found in Sections  \ref{sec:GAC}--\ref{sec:GECHM}. More precisely,
\begin{itemize}
\item
Section \ref{sec:GAC} is devoted to the so-called ``global approximate controllability". More precisely, we first prove Proposition \ref{thm:homopotyconver}  and then show  Proposition \ref{Prop:pse}. This part is also related to \hyperref[Step1]{Step 1}--\hyperref[Step4]{Step 4}  for the proof of Theorem \ref{thm:globalcontrol}.
\item Next in Section \ref{sec:local-control-stabilization} we provide ``quantitative local controllability and stabilization" properties that is also devoted to \hyperref[Step5]{Step 5}.    It includes the proofs of the local null controllability Theorem \ref{thm:nullcontrol} as well as the local rapid stabilization \ref{thm-rapid-stab}.
\item Finally, we conclude the paper by presenting the ``global exact controllability between harmonic maps"  in Section \ref{sec:GECHM}. This section is also related to \hyperref[Step6]{Step 6} and \hyperref[Step7]{Step 7} that contains Theorem \ref{lem:excconhm0} and Theorem \ref{lem:excconhm}. Thus we also finish the proof of Theorem \ref{thm:globalcontrol}.
\end{itemize}
\begin{figure}[t]
\centering
\includegraphics[width=0.99\linewidth, trim={0cm 0.0cm 0cm 0.0cm},clip]{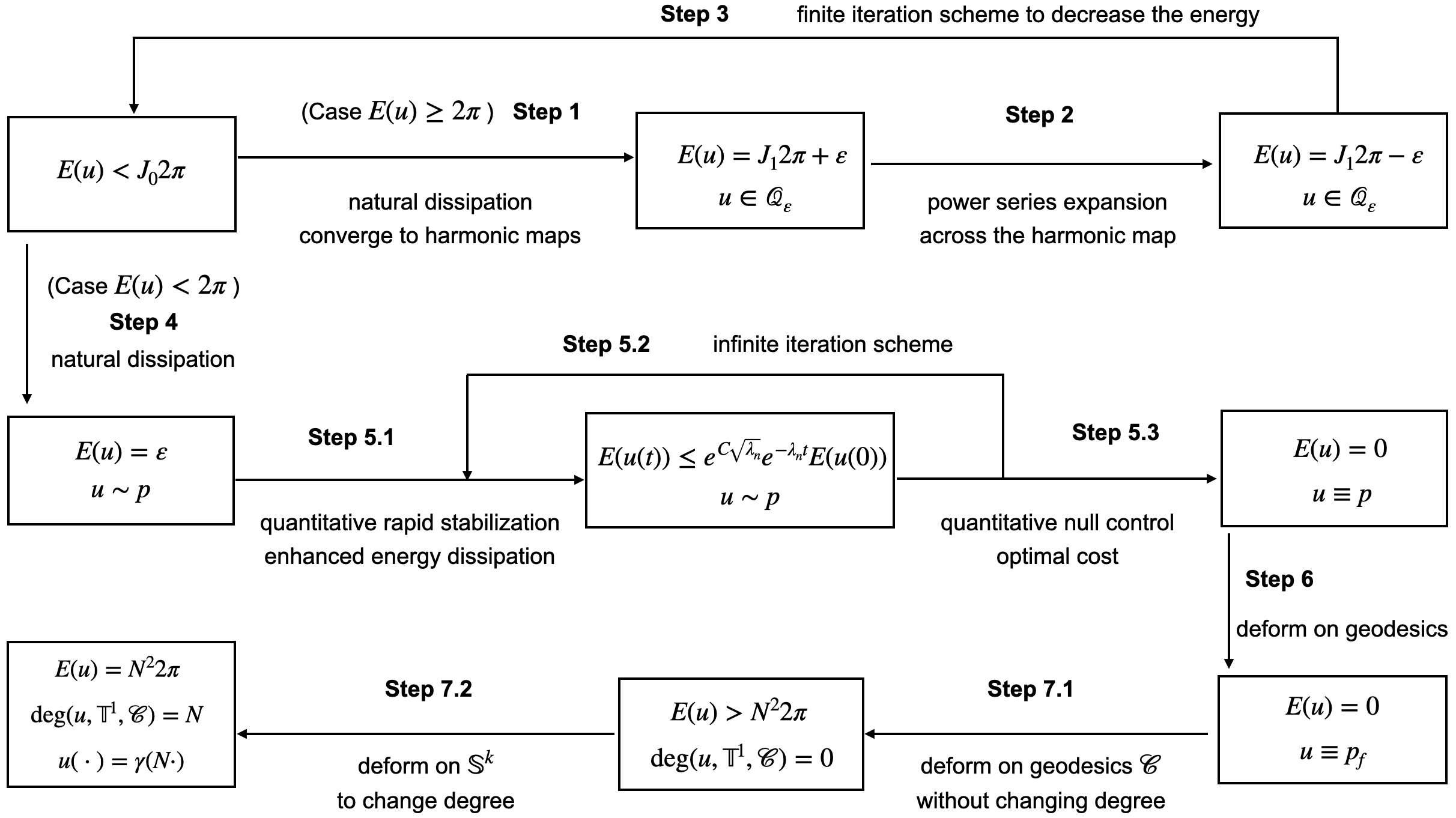}
\caption{The strategy for the proof of the global controllability to harmonic maps: Theorem  \ref{thm:globalcontrol}  \protect  \\
\hyperref[Step1]{Step 1} is the consequence of  Proposition \ref{thm:homopotyconver}, and is illustrated by  Figure \ref{pic2};   \protect \\\hyperref[Step2]{Step 2} is the result of  Proposition \ref{Prop:pse}, and is illustrated by  Figure \ref{pic3};    \protect \\
\hyperref[Step3]{Step 3} is the iteration of the preceding two steps;   \protect \\
\hyperref[Step4]{Step 4} is  the consequence of  Corollary \ref{thm-expo-decay}, and is again illustrated by  Figure \ref{pic2};   \protect \\\hyperref[Step5]{Step 5} is deduced from Theorem  \ref{thm:nullcontrol}, and we refer to Figure \ref{pic4};   \protect \\
\hyperref[Step6]{Step 6} is the characterized by  Theorem \ref{lem:excconhm0}, and we observe this deformation in  Figure \ref{pic5};   \protect \\\hyperref[Step7]{Step 7} is  shown  using  Theorem \ref{lem:excconhm}, and is illustrated by  Figure \ref{pic6}.
}
\label{picstrategy}
\end{figure}
\subsection{Some further questions}
We believe that there are many interesting follow-up questions to consider.  Here are some of them:
\begin{itemize}
    \item In Theorem \ref{thm:globalcontrol} we proved the global controllability of the harmonic map heat flow. However, this proof relies on the natural dissipation of the flow, so it is necessary to wait for a while to prepare the control.  It is known that using the return method introduced by the first author one prove the global controllability of some nonlinear partial differential equations in a short time, for example for the Euler equations \cite{Coron-Euler-1996, Glass-2000} or  the Navier-Stokes equations  \cite{Coron-Marbach-Sueur-Zhang}.  Is it possible to get small-time global controllability for this geometric model?
    \item In Theorem \ref{thm-rapid-stab}, thanks to the frequency Lyapunov method introduced by the second author \cite{Xiang-heat-2020, Xiang-NS-2020}, we obtained local rapid stabilization. Moreover, this quantitative rapid stabilization result also yields the small-time local finite-time stabilization. However, this technique does not apply to semi-global rapid stabilization.  Can we construct an explicit feedback law to rapidly stabilize the system in $\mathbf{H}(2\pi- \varepsilon)$ for any small $\varepsilon$?   Notice that due to Theorem \ref{THM-nounifdecay}, it is impossible to achieve uniform asymptotic
    stabilization in $\mathbf{H}(2\pi)$.
 We refer to the papers by the authors concerning the topic of (global) finite-time stabilization \cite{Coron-Xiang-2021}.
\item In Theorem \ref{lem:excconhm} and Theorem \ref{th-small-time-global-harmonic}, thanks to the geometric feature of the equation, we have proved the small-time global controllability between harmonic maps within the same homotopy class for general compact Riemannian manifold targets, which is to be compared with the analogous but longstanding problem for the nonlinear heat equations. Recently, Hartmann--Kellay--Tucsnak developed a novel method to characterize the reachable set of the heat equation \cite{MR4153111}. Is it possible to combine their ideas to  discover  the reachable set of this geometric equation and even to find a new pheonomenon? 
\item What is the situation for  more complicated geometric models? For example, what if the sphere target is replaced by a general compact Riemannian manifold? What if the space $\mathbb{T}^1$ is further replaced by a general domain $\Omega$ or even a general compact Riemannian manifold? Can we expect similar controllability and stabilization results?  Recall that in \cite{Liu-2018} the local controllability of the flow from $\Omega$ to $\Sph^2$ has been considered. Here we mainly refer to global control problems.

\end{itemize}

\section{Preliminary part}\label{sec:prep}
In this preliminary section, we provide some  definitions, well-posedness results, and basic properties of the controlled harmonic map heat flow that is related to this paper, parts of which are inspired by the authors' previous works on the controlled wave maps equations \cite{Coron-Krieger-Xiang-1, Krieger-Xiang-2022}.

\subsection{Some basic properties of the  controlled harmonic map heat flow}

\subsubsection{ Rotation invariance}
The controlled harmonic map heat flow is  invariant under  the action of the orthogonal group. To be more precise,  for any given solution, $(u, f)$, of the controlled harmonic map heat flow form $\mathbb{T}^1$ to $\Sph^k$ (with $k\geq 1$):
\begin{equation} \notag
\begin{cases}
\partial_t u- \Delta u= |\partial_x u|^2 u+ \mathbf{1}_{\omega} f^{u^{\perp}},  \\
u(0, x)= u_0(x),
\end{cases}
\end{equation}
and for any matrix $A$ belongs to $O(k+1)$, the pair $(\bar u, \bar f):= (A u, Af)$ is also  a solution of
\begin{equation} \notag
\begin{cases}
\partial_t \bar u- \Delta \bar u= |\partial_x \bar u|^2 \bar u+  \mathbf{1}_{\omega} \bar f^{\bar u^{\perp}},  \\
\bar u(0, x)= A u_0(x).
\end{cases}
\end{equation}
This is a consequence of straightforward calculation.
In particular,  every harmonic map from $\mathbb{T}^1$ to $\mathbb{S}^k$ can be characterized as follows: for any harmonic map $u: \mathbb{T}^1\rightarrow \mathbb{S}^k\subset \mathbb{R}^{k+1}$, there exist an integer $N$ and a  matrix $A\in O(k+1)$ such that
\begin{equation*}
   E(u)= 2\pi N^2 \textrm{ and }   A u(x)= \varphi (Nx) \; \forall x\in \mathbb{T}^1,
\end{equation*}
 where $\varphi$ is the simplest non-trivial harmonic map:
\begin{equation}
\label{def-varphi}
    \varphi(x):= (\cos{x}, \sin{x}, 0,\ldots , 0)^T \; \forall x\in \mathbb{T}^1.
\end{equation}

\subsubsection{Well-posedness and continuous dependence results}
Now we present some related well-posedness results. This covers  the inhomogeneous heat equation and the controlled harmonic map heat flow equations, while the closed-loop systems are introduced later on in
Section \ref{subsec:stabi:pre}.

\begin{lem}[The inhomogeneous heat equation]\label{lem:freeheat}
Let $  T >0$.  For any initial state  $v_0: \mathbb{T}^1\rightarrow \mathbb{R}^{k+1}$ in $H^1(\mathbb{T}^1)$ and for any force term $f: [0, T]\times \mathbb{T}^1\rightarrow \mathbb{R}^{k+1}$ in $L^2(0, T; L^2(\mathbb{T}^1))$, the inhomogeneous heat  equation
\begin{equation}\label{eq:inhomoheat}
  \partial_t u- \Delta u= f \; \textrm{ with } u(0, \cdot)= v_0(\cdot),
\end{equation}
admits a unique solution in $C^0([0,T];H^1(\mathbb{T}^1))$. This solution satisfies
\begin{align}
\label{evol-energy}
\frac{1}{2} \frac{d}{dt}\|u\|_{L^2}^2+\int_{\mathbb{T}^1}|u_x|^2dx&=\int_{\mathbb{T}^1} \langle f, u \rangle dx
\text{ in the sense of distributions on $(0,T)$},
\\
\|u(t)\|_{L^2(\mathbb{T}^1)}&\leq  \|v_0\|_{L^2(\mathbb{T}^1)}+ \|f\|_{L^1(0, t;L^2(\mathbb{T}^1))} \;  \forall t\in [0, T], \\
\|u_{x}\|^2_{L^2(0, t; L^2(\mathbb{T}^1))}&\leq  \|v_{0}\|^2_{L^2(\mathbb{T}^1)}+ 2\|f\|^2_{L^1(0, t;L^2(\mathbb{T}^1))} \;  \forall t\in [0, T], \\
\|u_x(t)\|^2_{L^2(\mathbb{T}^1)}+ \|u_{xx}\|^2_{L^2(0, t; L^2(\mathbb{T}^1))}&\leq  \|v_{0x}\|^2_{L^2(\mathbb{T}^1)}+ \|f\|^2_{L^2(0, t;L^2(\mathbb{T}^1))} \;  \forall t\in [0, T].
\end{align}
\end{lem}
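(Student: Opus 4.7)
The plan is to prove existence, uniqueness, and the four estimates by explicit Fourier construction on the torus combined with the standard energy method. Since $\mathbb{T}^1$ carries no boundary, all integration-by-parts steps are clean, and the Fourier representation supplies the regularity needed to justify the formal manipulations.

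\textbf{Construction.} I would decompose everything in the Fourier basis of $\mathbb{T}^1$: write $v_0(x)=\sum_{n\in\mathbb{Z}}\hat v_{0,n}e^{inx}$ and $f(t,x)=\sum_{n\in\mathbb{Z}}\hat f_n(t)e^{inx}$, where the coefficients are vectors in $\mathbb{R}^{k+1}$. Since $-\Delta$ diagonalizes with eigenvalues $n^2$, the equation \eqref{eq:inhomoheat} reduces mode-by-mode to a scalar ODE solved by Duhamel's formula
\begin{equation*}
\hat u_n(t)=e^{-n^2 t}\hat v_{0,n}+\int_0^t e^{-n^2(t-s)}\hat f_n(s)\,ds.
\end{equation*}
A direct Parseval computation, together with the bound $(1+n^2)\int_0^t e^{-n^2(t-s)}|\hat f_n(s)|\,ds\leq \|f_n\|_{L^2(0,T)}\cdot C$, shows that $u(t,x):=\sum_n \hat u_n(t)e^{inx}$ lies in $C^0([0,T];H^1(\mathbb{T}^1))\cap L^2(0,T;H^2(\mathbb{T}^1))$ and solves \eqref{eq:inhomoheat} distributionally. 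Uniqueness follows because the difference of any two solutions has identically zero Fourier modes.

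\textbf{Energy identity and $L^2$-type estimates.} Multiplying \eqref{eq:inhomoheat} by $u$, integrating over $\mathbb{T}^1$, and integrating by parts (no boundary terms on the torus) yields the first claimed identity \eqref{evol-energy}. From Cauchy--Schwarz one gets $\tfrac{d}{dt}\|u\|_{L^2}^2\leq 2\|f\|_{L^2}\|u\|_{L^2}$, hence (using a standard regularization for points where $\|u\|_{L^2}$ vanishes) $\tfrac{d}{dt}\|u\|_{L^2}\leq \|f\|_{L^2}$, and time integration yields the second estimate. Integrating \eqref{evol-energy} itself in time gives
\begin{equation*}
\tfrac{1}{2}\|u(t)\|_{L^2}^2+\|u_x\|_{L^2(0,t;L^2)}^2=\tfrac{1}{2}\|v_0\|_{L^2}^2+\int_0^t\!\!\int_{\mathbb{T}^1}\langle f,u\rangle\,dx\,ds,
\end{equation*}
and bounding the last integral by $\|f\|_{L^1(0,t;L^2)}\|u\|_{L^\infty(0,t;L^2)}$ combined with the previously established $L^2$ bound produces the third inequality (with the constant $2$ coming from $(a+b)^2\leq 2a^2+2b^2$).

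\textbf{$H^1$ estimate.} For the final inequality I would test the equation against $-u_{xx}$, which is legitimate since $u\in L^2(0,T;H^2)$ by construction. Integrating by parts gives
\begin{equation*}
\tfrac{1}{2}\tfrac{d}{dt}\|u_x\|_{L^2}^2+\|u_{xx}\|_{L^2}^2=-\int_{\mathbb{T}^1}\langle f,u_{xx}\rangle\,dx\leq \tfrac{1}{2}\|f\|_{L^2}^2+\tfrac{1}{2}\|u_{xx}\|_{L^2}^2.
\end{equation*}
Absorbing $\tfrac{1}{2}\|u_{xx}\|_{L^2}^2$ into the left-hand side and integrating from $0$ to $t$ gives exactly the fourth estimate.

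The result is entirely classical and presents no real obstacle: the only subtlety is ensuring that each formal identity is justified by genuine regularity, which the Fourier construction guarantees from the outset. The cleanness of the torus setting (no boundary terms, and the spectral decomposition converging absolutely at the required regularity thanks to $v_0\in H^1$ and $f\in L^2_tL^2_x$) makes both existence and the four estimates immediate consequences of the standard parabolic toolkit.
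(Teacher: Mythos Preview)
Your proof is correct and follows essentially the same route as the paper's: the energy identity \eqref{evol-energy} comes from testing against $u$, the $L^2$ and $L^2_tH^1_x$ bounds follow from it, and the $H^1$ estimate comes from testing against $-u_{xx}$ with Young's inequality. The only difference is that you construct the solution explicitly via Fourier series, whereas the paper simply cites a reference (Brezis) for existence and uniqueness; both are standard for this classical lemma.
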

The existence and uniqueness of a solution to \eqref{eq:inhomoheat} are classical; see for example the book by Brezis \cite[Theorem 10.11]{Brezis-book}. Concerning the other statements,  it suffices to observe that
\begin{gather*}
 \frac{1}{2} \frac{d}{dt}\|u\|_{L^2}^2= \int_{\mathbb{T}^1} \langle u_{t}, u \rangle dx= \int_{\mathbb{T}^1} \langle u_{xx}+ f, u \rangle dx =-\|u_{x}\|_{L^2}^2+ \int_{\mathbb{T}^1}\langle f, u \rangle dx, \\
    \frac{1}{2} \frac{d}{dt}\|u_x\|_{L^2}^2= \int_{\mathbb{T}^1} \langle u_{tx}, u_x \rangle dx= -\int_{\mathbb{T}^1} \langle u_{t}, u_{xx} \rangle dx\leq -\frac{1}{2} \|u_{xx}\|_{L^2}^2+ \frac{1}{2} \|f\|_{L^2}^2.
\end{gather*}

Concerning the controlled harmonic map heat flow one has  similar results:
\begin{lem}[The controlled harmonic map heat flow]\label{lem:forcehmhf}
There exists an  explicit constant $C>0$ such that for any  $T >0$, for any initial state  $v_0: \mathbb{T}^1\rightarrow \mathbb{S}^{k}\subset \mathbb{R}^{k+1}$ in $H^1(\mathbb{T}^1)$, and for any force term $f: [0, T]\times \mathbb{T}^1\rightarrow \mathbb{R}^{k+1}$ in $L^2(0, T; L^2(\mathbb{T}^1))$, the controlled harmonic maps  heat  equation
\eqref{eq:controlhmhf}
admits a unique solution in $C^0([0,T];H^1(\mathbb{T}^1;\Sph^k))$, which satisfies
\begin{gather*}
\|u_{x}(t)\|^2_{L^2(\mathbb{T}^1)}+ \|u_t\|_{L^2(0, t; L^2(\mathbb{T}^1))}^2\leq  \|v_{0x}\|^2_{L^2(\mathbb{T}^1)}+ \|f\|^2_{L^2(0, t;L^2(\mathbb{T}^1))} \;  \forall t\in [0, T], \\
 \|u_{xx}\|^2_{L^2(0, T; L^2(\mathbb{T}^1))}\leq  \|v_{0x}\|^2_{L^2(\mathbb{T}^1)}+ C \left(T\|v_{0x}\|^6_{L^2(\mathbb{T}^1)}+ \|f\|^2_{L^2(0, T;L^2(\mathbb{T}^1))}+ T\|f\|^6_{L^2(0, T;L^2(\mathbb{T}^1))}\right).
\end{gather*}
\end{lem}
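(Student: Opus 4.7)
The plan is to establish local existence and uniqueness by a standard parabolic fixed-point argument in the ambient space $\mathbb{R}^{k+1}$, preserve the constraint $u\in\mathbb{S}^k$ algebraically, and then derive the two a priori bounds by testing the equation against $u_t$ and afterwards reading $u_{xx}$ directly off the equation.

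First, I would extend the projector $f^{u^\perp}=f-\langle f,u\rangle u/|u|^2$ smoothly to a tubular neighborhood of $\mathbb{S}^k$ and apply a contraction argument in $C([0,\tau];H^1(\mathbb{T}^1;\mathbb{R}^{k+1}))$ for the resulting semilinear heat equation; Lemma \ref{lem:freeheat} together with the embedding $H^1(\mathbb{T}^1)\hookrightarrow L^\infty(\mathbb{T}^1)$ makes the quadratic gradient nonlinearity locally Lipschitz on bounded sets, giving a short-time solution. To check that the sphere constraint is preserved, I would set $\rho:=|u|^2-1$: the identity $\langle u,u_{xx}\rangle=\tfrac12\rho_{xx}-|u_x|^2$, together with $f^{u^\perp}\perp u$ and $\langle u,|u_x|^2 u\rangle=|u_x|^2(1+\rho)$, gives $\rho_t-\rho_{xx}=2|u_x|^2\rho$ with $\rho(0)=0$; uniqueness for this linear parabolic equation with $L^1_t L^\infty_x$ potential (or the maximum principle applied to $\rho$ and $-\rho$) forces $\rho\equiv 0$. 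Once the constraint is secured, the bound $|u|=1$ precludes blow-up and upgrades local to global existence.

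For the first estimate I would multiply the equation by $u_t$ and integrate over $\mathbb{T}^1$, using $\langle u,u_t\rangle=\tfrac12\partial_t|u|^2=0$ to annihilate $\int|u_x|^2\langle u,u_t\rangle\,dx$, and using $u_t\in T_u\mathbb{S}^k$ to replace $\int\mathbf{1}_\omega\langle f^{u^\perp},u_t\rangle\,dx$ by $\int\mathbf{1}_\omega\langle f,u_t\rangle\,dx$. After integrating $-\int u_{xx}\cdot u_t\,dx$ by parts in $x$, one obtains $\|u_t\|_{L^2}^2+\tfrac12\tfrac{d}{dt}\|u_x\|_{L^2}^2=\int\mathbf{1}_\omega\langle f,u_t\rangle\,dx$; Young's inequality with weight $\tfrac12$ absorbs $\tfrac12\|u_t\|_{L^2}^2$ on the right, and integrating in time gives the first claimed bound with no constant in front of $\|v_{0x}\|^2$ or $\|f\|^2$.

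For the second estimate I would read off $u_{xx}=u_t-|u_x|^2u-\mathbf{1}_\omega f^{u^\perp}$ from the equation, so that $\|u_{xx}\|_{L^2}^2\lesssim\|u_t\|_{L^2}^2+\|u_x\|_{L^4}^4+\|f\|_{L^2}^2$ (using $|u|=1$ and $|f^{u^\perp}|\leq|f|$). The one-dimensional Gagliardo--Nirenberg inequality $\|u_x\|_{L^4}^4\leq C\|u_x\|_{L^2}^3\|u_{xx}\|_{L^2}+C\|u_x\|_{L^2}^4$ combines with Young's inequality to absorb a small multiple of $\|u_{xx}\|_{L^2}^2$ into the left-hand side, leaving a remainder bounded by $C(\|u_x\|_{L^2}^6+\|u_x\|_{L^2}^4)$. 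Integrating in $t\in[0,T]$ and using the first estimate to bound $\sup_{t\in[0,T]}\|u_x(t)\|_{L^2}^2$ by $\|v_{0x}\|_{L^2}^2+\|f\|_{L^2L^2}^2$, the sixth-power integrand contributes $CT(\|v_{0x}\|^6+\|f\|^6)$, and the fourth-power contribution can be folded into the same form by elementary inequalities. The main care required is purely bookkeeping in this second step, namely tracking the constant $C$ and the $T$-weights so that they land on the sixth-power terms as stated; nothing in the argument is conceptually difficult, and the one-dimensional setting keeps every nonlinearity subcritical with respect to the $H^1$ energy.
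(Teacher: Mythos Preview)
Your proposal is correct and follows essentially the same strategy as the paper: a Banach fixed-point argument for local well-posedness, the energy identity obtained by testing against $u_t$ for the first estimate, and the Gagliardo--Nirenberg interpolation $\|u_x\|_{L^4}^4\lesssim\|u_x\|_{L^2}^3\|u_{xx}\|_{L^2}$ together with Young's inequality for the second. Two small differences are worth noting. First, you derive the second bound by isolating $u_{xx}$ from the equation and squaring, whereas the paper again computes $\tfrac{1}{2}\tfrac{d}{dt}\|u_x\|_{L^2}^2=-\int\langle u_t,u_{xx}\rangle$ and expands $u_t$ via the PDE; the latter route naturally produces the sharp coefficient $1$ in front of $\|v_{0x}\|_{L^2}^2$ in the stated inequality, while yours gives a constant there (harmless, since the lemma only claims existence of some $C$). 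Second, you include an explicit constraint-preservation step via the linear equation $\rho_t-\rho_{xx}=2|u_x|^2\rho$ for $\rho=|u|^2-1$, which the paper does not spell out in this proof; note, however, that the statement ``$|u|=1$ precludes blow-up'' is not quite enough by itself---global continuation requires the $H^1$ a priori bound from your first estimate, so the order of your argument should be: local existence, constraint preservation, then the energy bound, and only then global existence.
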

\begin{proof}[Proof of Lemma \ref{lem:forcehmhf}]
We first concentrate on the \textit{a priori} estimates.    Thanks to direct integration by parts and the geometric condition, there is
\begin{align*}
     \frac{1}{2} \frac{d}{dt}\|u_x\|_{L^2}^2= \int_{\mathbb{T}^1} \langle u_{tx}, u_x \rangle dx= - \int_{\mathbb{T}^1} \langle u_t, u_t- \mathbf{1}_{\omega} f^{u^{\perp}} \rangle dx\leq - \frac{1}{2}\|u_t\|_{L^2}^2+ \frac{1}{2} \|f\|_{L^2}^2
\end{align*}
which implies
\begin{equation*}
   \|u_x\|_{C([0, T]; L^2)}^2\leq \|v_{0x}\|_{L^2}^2+ \|f\|_{L^2(0, T; L^2)}^2.
\end{equation*}
We also have
\begin{align*}
     \frac{1}{2} \frac{d}{dt}\|u_x\|_{L^2}^2&=  - \int_{\mathbb{T}^1} \langle u_{xx}+ |u_x|^2 u+ \mathbf{1}_{\omega} f^{u^{\perp}}, u_{xx} \rangle dx\\
    &\leq - \|u_{xx}\|_{L^2}^2+ \|f\|_{L^2} \|u_{xx}\|_{L^2}+ \int_{\mathbb{T}^1} \langle |u_x|^2 u, -u_t+ |u_x|^2 u+ \mathbf{1}_{\omega} f^{u^{\perp}} \rangle dx\\
    &\leq - \frac{3}{4}\|u_{xx}\|_{L^2}^2+ 2\|f\|^2_{L^2}+  2\|u_{x}\|_{L^4}^4 \\
    &\leq  -\frac{1}{2}\|u_{xx}\|_{L^2}^2+ 2\|f\|^2_{L^2}+ C \|u_x\|_{L^2}^6,
\end{align*}
where we have used the inequality
\begin{equation*}
    \|u_x\|_{L^4}^4\lesssim
\|u_x\|_{L^2}^2 \|u_x\|_{L^{\infty}}^2  \lesssim \|u_x\|_{L^2}^3 \|u_{xx}\|_{L^{2}}.
\end{equation*}
Therefore,
\begin{align*}
    \|u_{xx}\|_{L^2(0, T; L^2)}^2&\leq \|v_{0x}\|_{L^2}^2+ 4 \|f\|_{L^2(0, T; L^2(\mathbb{T}^1))}^2+ C \int_0^T \|u_x(s)\|_{L^2}^6 ds \\
    &\leq \|v_{0x}\|_{L^2}^2+ 4 \|f\|_{L^2(0, T; L^2(\mathbb{T}^1))}^2+ C T \left(\|v_{0x}\|_{L^2}^6+ \|f\|_{L^2(0, T; L^2)}^6 \right).
\end{align*}
This finishes the proof of desired estimates.

Let us now deal with the proof of the well-posedness result.
Since $f^{u^{\perp}}$ can be written as  $f- \langle f, u \rangle u$, the  geometric equation \eqref{eq:controlhmhf}  is equivalent to the following nonlinear heat equation:
  \begin{equation}
    \partial_t u- \Delta u= |\partial_x u|^2 u+ \mathbf{1}_{\omega} f-  \langle \mathbf{1}_{\omega} f, u \rangle u. \notag
\end{equation}
By regarding $\mathbf{1}_{\omega} f$ as some source term $g$, the preceding equation becomes a standard  nonlinear (subcritical) heat equation. This observation, to be combined with the \textit{a priori} estimates given in Lemma \ref{lem:freeheat} and the standard Banach fixed-point argument, leads to the existence and uniqueness of the solution to the system.  Indeed, it suffices to consider  the  map
\begin{align*}
\Gamma: \mathcal{X}_T&\rightarrow \mathcal{X}_T\\
v&\mapsto u
\end{align*}
where the Banach space $\mathcal{X}_T$ is defined in the following in \eqref{def:mcXT}, and the state $u= \Gamma v$ is the unique solution of
\begin{equation*}
\begin{cases}
\partial_t u- \Delta u= |\partial_x v|^2 v+ \mathbf{1}_{\omega} f-  \langle \mathbf{1}_{\omega} f, v \rangle v, \\
 u(0, \cdot)= u_0(\cdot).
\end{cases}
\end{equation*}
This map admits a unique  fixed-point in $\mathcal{X}_T$. Thus we finish the proof of the well-posedness result.
\end{proof}

To state the continuous dependence result we work with the following standard  space $\mathcal{X}_T$ in space-time:
\begin{gather}\label{def:mcXT}
\mathcal{X}_T:= \{f: f\in  C([0, T]; H^1(\mathbb{T}^1)), f_{xx}\in   L^2(0, T; L^2(\mathbb{T}^1))\}, \\
    \|f\|_{\mathcal{X}_T}:= \|f\|_{C([0, T]; H^1(\mathbb{T}^1))}+ \|f_{xx}\|_{L^2(0, T; L^2(\mathbb{T}^1))}\; \forall f\in \mathcal{X}_T.
\end{gather}
\begin{lem}[Continuous dependence of the controlled harmonic map heat flow]\label{lem:cont-dep}
    Let $T_0>0$, let $M>0$. There exists some effectively computable constant $C_d$ depending on the values of $(T_0, M)$ such that for any initial states $v_1(x), v_2(x)\in H^1(\mathbb{T}^1; \Sph^k)$ and any force terms $f_1, f_2\in L^{\infty}(0, T_0; L^2(\mathbb{T}^1))$ satisfying
    \begin{equation*}
        \|v_i\|_{H^1}+ \|f_i\|_{L^{\infty}(0, T_0; L^2(\mathbb{T}^1))}\leq M\; \forall i\in \{1, 2\},
    \end{equation*}
    the unique solutions of the controlled harmonic map heat flow $\forall i\in \{1, 2\}$,
\begin{equation} \notag
\begin{cases}
\partial_t u_i- \Delta u_i= |\partial_x u_{i}|^2 u_i+ \mathbf{1}_{\omega} f_i^{u_i^{\perp}},  \\
u_i(0, x)= v_i(x),
\end{cases}
\end{equation}
    satisfy
    \begin{equation*}
     \|u_1- u_2\|_{\mathcal{X}_{T_0}}\leq C_d \left(\|v_1- v_2\|_{H^1(\mathbb{T}^1)}+ \|f_1- f_2\|_{L^2(0, T_0; L^2(\mathbb{T}^1))} \right).
    \end{equation*}
\end{lem}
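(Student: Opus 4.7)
The plan is to rewrite the geometric equation as the ordinary inhomogeneous heat equation (as already used in the proof of Lemma \ref{lem:forcehmhf}), so that the difference $w:= u_1-u_2$ satisfies a linear heat equation with a source term that depends on $w$, the $u_i$, and $f_1-f_2$, and then to close an $H^1$ energy estimate via Gronwall's inequality. More precisely, using $f_i^{u_i^\perp} = f_i - \langle f_i, u_i\rangle u_i$, each $u_i$ solves
\begin{equation*}
\partial_t u_i - \Delta u_i = |\partial_x u_i|^2 u_i + \mathbf{1}_\omega f_i - \langle \mathbf{1}_\omega f_i, u_i\rangle u_i,
\end{equation*}
so that $w$ solves $\partial_t w - \Delta w = G$ with
\begin{align*}
G &= \langle \partial_x u_1 + \partial_x u_2, \partial_x w\rangle u_1 + |\partial_x u_2|^2 w + \mathbf{1}_\omega(f_1-f_2) \\
&\quad - \langle \mathbf{1}_\omega (f_1-f_2), u_1\rangle u_1 - \langle \mathbf{1}_\omega f_2, w\rangle u_1 - \langle \mathbf{1}_\omega f_2, u_2\rangle w .
\end{align*}

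First I would invoke Lemma \ref{lem:forcehmhf} to obtain an \emph{a priori} bound, depending only on $M$ and $T_0$, of the form $\|u_i\|_{L^\infty(0,T_0;H^1)} + \|u_i\|_{L^2(0,T_0;H^2)} \le K_M$ for $i=1,2$. Since each $u_i$ takes values in $\mathbb{S}^k$, $\|u_i\|_{L^\infty} \le 1$ pointwise. Using the one-dimensional Sobolev embedding $H^1(\mathbb{T}^1)\hookrightarrow L^\infty(\mathbb{T}^1)$ and the interpolation $\|\partial_x u_i\|_{L^\infty}^2 \lesssim \|u_i\|_{H^2}\|u_i\|_{H^1}$, every term in $G$ can be bounded in $L^2_x$ by a combination of $\|w\|_{H^1}$ times $(1+\|u_1\|_{H^2}^2+\|u_2\|_{H^2}^2+\|f_2\|_{L^2}^2)$ plus $\|f_1-f_2\|_{L^2}$. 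Concretely, one gets
\begin{equation*}
\|G(t)\|_{L^2}^2 \;\le\; C_M \bigl(1 + \|u_1(t)\|_{H^2}^2 + \|u_2(t)\|_{H^2}^2 \bigr)\|w(t)\|_{H^1}^2 + C\|f_1(t)-f_2(t)\|_{L^2}^2 .
\end{equation*}

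Next I would run the standard $H^1$ energy estimate for the linear heat equation (cf.\ Lemma \ref{lem:freeheat}), which yields
\begin{equation*}
\|w(t)\|_{H^1}^2 + \int_0^t \|w_{xx}\|_{L^2}^2\, ds \;\lesssim\; \|w(0)\|_{H^1}^2 + \int_0^t \|G\|_{L^2}^2\, ds .
\end{equation*}
Plugging in the bound for $\|G\|_{L^2}^2$ and using that $\|u_1\|_{H^2}^2 + \|u_2\|_{H^2}^2 \in L^1(0,T_0)$ with an $L^1$-norm controlled by $K_M^2$, the Gronwall lemma applied to $t\mapsto \|w(t)\|_{H^1}^2$ produces a bound
\begin{equation*}
\|w\|_{L^\infty(0,T_0;H^1)}^2 \;\le\; C_d^2\bigl(\|v_1-v_2\|_{H^1}^2 + \|f_1-f_2\|_{L^2(0,T_0;L^2)}^2\bigr) ,
\end{equation*}
with $C_d = C_d(T_0,M)$ effectively computable, and a subsequent substitution back into the energy identity controls the missing $L^2(0,T_0;H^2)$ piece, completing the estimate in the $\mathcal{X}_{T_0}$-norm.

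The only delicate step is the bound on the first nonlinear term $\langle \partial_x u_1 + \partial_x u_2, \partial_x w\rangle u_1$, because it involves $\partial_x w$, not just $w$: I expect to handle it exactly as above by using $\|u_1\|_{L^\infty}\le 1$ and placing $\partial_x(u_1+u_2)$ in $L^\infty_x$ via $H^2$ (which is only in $L^2$ in time). This is what forces the appearance of the time-integrable weight $(1+\|u_1\|_{H^2}^2+\|u_2\|_{H^2}^2)$ and makes Gronwall, rather than a direct fixed-point contraction, the right tool to close the estimate.
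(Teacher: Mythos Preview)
Your proof is correct and follows the same architecture as the paper's: write $w=u_1-u_2$ as a solution of the linear heat equation with source $G$, invoke the \emph{a priori} $\mathcal{X}_{T_0}$ bounds from Lemma~\ref{lem:forcehmhf}, and estimate each piece of $G$ using one-dimensional embeddings. The telescoping you use for $|u_{1x}|^2u_1-|u_{2x}|^2u_2$ and for the projected force terms is the mirror image of the paper's (the paper puts $|u_{1x}|^2$ on $w$ and $u_2$ on the cross term, you do the opposite), but this is immaterial.

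The one genuine difference is in how the estimate is closed. The paper works on a short subinterval $[0,T]$, extracts a smallness factor $T^{1/4}$ from every term in $L(w)$ via the bound $\|u_{ix}\|_{L^2(0,T;L^\infty)}^2\lesssim T^{1/2}$, obtains
\[
\|w\|_{\mathcal{X}_T}\lesssim \|w(0)\|_{H^1}+\|f_1-f_2\|_{L^2_{t,x}}+T^{1/4}\|w\|_{\mathcal{X}_T},
\]
absorbs the last term for $T$ small, and then iterates over consecutive subintervals to reach $T_0$. You instead work directly on $[0,T_0]$, put the dangerous $\partial_x u_i$ factors into an $L^1_t$-integrable weight $(1+\|u_1\|_{H^2}^2+\|u_2\|_{H^2}^2)$, and close by Gronwall. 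Both are standard; your route is slightly more economical since it avoids the iteration bookkeeping, while the paper's short-time contraction argument is the natural continuation of the fixed-point scheme already used in the well-posedness proof.
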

\begin{proof}[Proof of Lemma \ref{lem:cont-dep}]
    The proof is a direct combination of the preceding well-posedness result (Lemma \ref{lem:forcehmhf}) and a bootstrap argument as we are now going to detail. For some $T\in (0, T_0]$ to be fixed later on, we consider the equations on time interval $(0, T)$. Compare the equations on $u_1$ and $u_2$, we know that  $w(t, x):= u_1(t, x)- u_2(t, x)$ satisfies
    \begin{align*}
     \partial_t w- \Delta w&= |u_{1x}|^2 w+ \langle w_x, u_{1x}+ u_{2x}\rangle u_2\\
     &\;\;\;\;\; + \mathbf{1}_{\omega} (f_1- f_2)+ \mathbf{1}_{\omega} \langle  f_1, u_1\rangle w+ \mathbf{1}_{\omega}\langle  f_1, w\rangle u_2+ \mathbf{1}_{\omega} \langle f_1- f_2, u_2\rangle u_2, \\
     &=: L(w)+  \mathbf{1}_{\omega}(f_1- f_2)+ \mathbf{1}_{\omega} \langle (f_1- f_2), u_2\rangle u_2.
    \end{align*}
Thanks to Lemma \ref{lem:forcehmhf}, we know that
\begin{equation*}
    \|u_1\|_{\mathcal{X}_{T_0}}+   \|u_2\|_{\mathcal{X}_{T_0}}\lesssim 1,
    \end{equation*}
which further implies  that for $i= 1, 2$,
\begin{equation*}
    \|u_{ix}\|^2_{L^2(0, T; L^{\infty})}\lesssim \int_0^T \|u_{ix}(t)\|_{L^2} \|u_{ixx}(t)\|_{L^2} dt\lesssim T^{\frac{1}{2}}.
\end{equation*}
By plugging these estimates into the preceding equation on $w$, according to Lemma    \ref{lem:freeheat}, we obtain
\begin{align*}
\|w\|_{\mathcal{X}_{T}}&\lesssim \|w(0)\|_{H^1}+ \|f_1- f_2\|_{L^2(0, T; L^2)}+ \| L(w)\|_{L^2(0, T; L^2)}.
\end{align*}

Successively, we have
\begin{align*}
    \| |u_{1x}|^2 w\|_{L^2(0, T; L^2)} &\lesssim \|u_{1x}\|_{L^{\infty}(0, T; L^2)} \|u_{1x}\|_{L^{2}(0, T; L^{\infty})} \|w\|_{L^{\infty}(0, T; L^{\infty})}\lesssim T^{\frac{1}{4}} \|w\|_{\mathcal{X}_{T}}, \\
  \| \langle w_x, u_{1x}\rangle u_2\|_{L^2(0, T; L^2)} &\lesssim \|w_{x}\|_{L^{\infty}(0, T; L^2)} \|u_{1x}\|_{L^{2}(0, T; L^{\infty})} \|u_2\|_{L^{\infty}(0, T; L^{\infty})}\lesssim T^{\frac{1}{4}} \|w\|_{\mathcal{X}_{T}}, \\
   \| \langle w_x, u_{2x}\rangle u_2\|_{L^2(0, T; L^2)} &\lesssim \|w_{x}\|_{L^{\infty}(0, T; L^2)} \|u_{2x}\|_{L^{2}(0, T; L^{\infty})} \|u_2\|_{L^{\infty}(0, T; L^{\infty})}\lesssim T^{\frac{1}{4}} \|w\|_{\mathcal{X}_{T}}, \\
    \| \langle f_1, u_1\rangle w\|_{L^2(0, T; L^2)} &\lesssim \|f_{1}\|_{L^{2}(0, T; L^2)} \|u_{1}\|_{L^{\infty}(0, T; L^{\infty})} \|w\|_{L^{\infty}(0, T; L^{\infty})}\lesssim T^{\frac{1}{2}} \|w\|_{\mathcal{X}_{T}}, \\
    \| \langle f_1, w\rangle u_2\|_{L^2(0, T; L^2)} &\lesssim \|f_{1}\|_{L^{2}(0, T; L^2)} \|u_{2}\|_{L^{\infty}(0, T; L^{\infty})} \|w\|_{L^{\infty}(0, T; L^{\infty})}\lesssim T^{\frac{1}{2}} \|w\|_{\mathcal{X}_{T}}.
\end{align*}

Therefore, by selecting $T$ sufficiently small, one obtains
\begin{align*}
\|w\|_{\mathcal{X}_{T}}&\lesssim \|w(0)\|_{H^1}+ \|f_1- f_2\|_{L^2(0, T; L^2)}.
\end{align*}
Then, we iterate this procedure to achieve the required estimates.

\end{proof}

\subsection{Controllability within each homotopy class}

Concerning controllability, due to a simple topological argument, one shall consider the deformation of the state within its homotopy class. Indeed, for any given initial state $u(t=0, \cdot)\in H^1(\mathbb{T}^1; \Sph^k)$ and any force $f \in L^2(0, T; L^2(\mathbb{T}^1))$, the unique solution of \eqref{eq:controlhmhf} satisfies $u\in C([0, T]; H^1(\mathbb{T}^1))$
and therefore is in  $C([0, T]; C_x(\mathbb{T}^1))$. Hence $u(t, \cdot)= u(t)(\cdot)$ can be regarded as a continuous deformation of a closed curve on $\Sph^k$, which forces $u(0)(\cdot)$ and $u(T)(\cdot)$ to be homotopic. This leads to the following definition.  See Figure \ref{pictorus} to illustrate this natural definition.

\begin{defn}\label{def-con-homo}
    Let $\mathcal{N}$ be a compact Riemannian submanifold of $\R^m$. The controlled harmonic map heat flow equation $u: \mathbb{R}^+\times \mathbb{T}^1\rightarrow \mathcal{N}$ is said to be  ``globally  null controllable in each homotopy class" if  for any pair of initial and final  states $(v_0, v_1)$ in  $H^1(\mathbb{T}^1;\mathcal{N})$  that  are homotopic, where $v_1$ is a harmonic map from $\mathbb{T}^1$ to $\mathcal{N}$, there exists a control $f_0\in L^2_{t, x}([0, T]\times \mathbb{T}^1)$ for some $T>0$ depending on the given pair of states such that, the unique solution of the controlled harmonic map heat flow equation with  initial state $u(0, \cdot)= v_0(\cdot)$ and  control $f_0$ satisfies $u(T, \cdot)= v_1(\cdot)$.
\end{defn}

\subsection{Stabilization and closed-loop systems}\label{subsec:stabi:pre}
A map $F$ as defined in  \eqref{feedbacklaw-def} is called  a Carath\'{e}odory map if the following three properties are satisfied:
\begin{itemize}
\item[\label{P1}($\mathcal{P}_1$)] There exists a nondecreasing function $R\in(0,+\infty)\mapsto C_B(R)\in (0,+\infty)$ such that, for every $R\in(0,+\infty)$ and for every $t\in\R$,   $ \|u_0\|_{\dot H^1}\leq R \Rightarrow \|F(t; u_0)\|_{L^2}\leq C_B(R)$;
   \item[\label{P2}($\mathcal{P}_2$)] For all $u_0 \in H^1(\mathbb{T}^1; \mathbb{S}^k)$, the function $t\in \mathbb{R}\mapsto F(t, u_0)\in L^2(\mathbb{T}^1)$ is measurable;
   \item[\label{P3}($\mathcal{P}_3$)] for almost every $t\in \mathbb{R}$, the function $ u_0 \in H^1_x (\mathbb{T}^1; \mathbb{S}^k)\mapsto F(t, u_0)\in L^2(\mathbb{T}^1; \mathbb{R}^{k+1})$ is continuous.
\end{itemize}
This map $F$ is further called a Lipschitz map if
\begin{itemize}
    \item[\label{P4}($\mathcal{P}_4$)]  For every $R>0$, there exists $K(R)>0$ such that
\begin{gather*}
    \left(\|u_1\|_{\dot H^1}\leq R, \|u_2\|_{\dot H^1}\leq R \right) \Rightarrow
    \left( \|F(t, u_1)- F(t, u_2)\|_{L^2}\leq K(R) \|u_1- u_2\|_{H^1}\right).
\end{gather*}
\end{itemize}

This motivates the definition of solutions to the closed-loop system \eqref{closed-loop-timevarying}:
\begin{defn}\label{def:sol:clos}
      Let $F$ be a map in the form of \eqref{feedbacklaw-def} that satisfies conditions \hyperref[P1]{($\mathcal{P}_1$)}, \hyperref[P2]{($\mathcal{P}_2$)}, and \hyperref[P3]{($\mathcal{P}_3$)}.  Let $T_1\in \mathbb{R}$.  Let $u_0\in L^2(\mathbb{T}^1; \mathbb{S}^k)$. A function $u$  is a solution to  the Cauchy problem
\begin{equation}\label{eq:cauchyclosedloop}
\begin{cases}
\partial_t u(t,x)- \Delta u(t,x)=  |u_{ x}|^2(t,x) u(t,x)+ \mathbf{1}_{\omega}  (F(t; u(t)))^{u(t,x)^{\perp}} \\
 u(T_1, x)= u_0(x),
     \end{cases}
\end{equation}
if there exists an interval $I$ with a non-empty interior satisfying $I \cap (-\infty, T_1]= \{T_1\}$ such that
 $u$ is defined on $I\times \mathbb{T}^1$ and is such that, for every $T_2>T_1 $ such that $[T_1,T_2]\subset I$, the restriction of $u$ to $[T_1,T_2]\times \mathbb{T}^1$ is in $C([T_1, T_2]; H^1 (\mathbb{T}^1; \mathbb{S}^k))$ and is a solution to the Cauchy problem \eqref{eq:controlhmhf} with $f(t, x):= F(t, u(t, \cdot))(x)$. The interval $I$ is denoted by $D(u)$.
\\
We say that a solution $u$ to the Cauchy problem \eqref{eq:cauchyclosedloop} is maximal if, for every solution $\tilde u$ to the Cauchy problem \eqref{eq:cauchyclosedloop} such that
\begin{gather*}
    D(u)\subset D(\tilde u), \\
   u(t, \cdot)=  \tilde u(t, \cdot) \textrm{ for every } t\in D(u),
\end{gather*}
one has $ D(u)= D(\tilde u)$.
\end{defn}

\begin{defn}
   Let  $F$ be a map in the form of \eqref{feedbacklaw-def} that satisfies conditions \hyperref[P1]{($\mathcal{P}_1$)}, \hyperref[P2]{($\mathcal{P}_2$)}, and \hyperref[P3]{($\mathcal{P}_3$)}. Let $I$ be a nonempty interval of $\mathbb{R}$. A function $\phi$ is a solution of the closed-loop system \eqref{closed-loop-timevarying} on $I$ if, for every $[T_1, T_2]\subset I$, the restriction of $\phi$ to $[T_1, T_2]\times \mathbb{T}^1$ is a solution of the Cauchy problem \eqref{eq:cauchyclosedloop} with initial state $\phi(T_1, \cdot)$.
\end{defn}

Similar to the open-loop system, one obtains the well-posedness of this closed-loop system.
\begin{prop}\label{lem:wellclosegene}
Let $F$ be a map in the form of \eqref{feedbacklaw-def} that satisfies conditions \hyperref[P1]{($\mathcal{P}_1$)}, \hyperref[P2]{($\mathcal{P}_2$)}, \hyperref[P3]{($\mathcal{P}_3$)}, and \hyperref[P4]{($\mathcal{P}_4$)}. Then, \\
1) For every $R\in (0, +\infty)$, there exist a time $T(R)>0$ and a constant $L(R)>0$ such that
\begin{itemize}
    \item For every $T_1\in \mathbb{R}$  and for every initial state $u_0$ satisfying $\|u_0\|_{\dot H^1}\leq R$, the Cauchy problem \eqref{eq:cauchyclosedloop}  with initial state $u_0$ at time $T_1$  has a unique solution on $[T_1, T_1+ T(R)]$;
    \item  For every $T_1\in \mathbb{R}$, for every $u_0$ (\textit{resp}. $\tilde u_0$) satisfying $\|u_0\|_{\dot H^1}\leq R$ (\textit{resp}. $\|\tilde u_0\|_{\dot H^1}\leq R$), the unique solution $u$ (\textit{resp}. $\tilde u $) of the Cauchy problem \eqref{eq:cauchyclosedloop} with initial state $u_0$ (\textit{resp}. $\tilde u_0$) at time $T_1$  satisfies
    \begin{equation*}
        \|u(t)- \tilde u(t)\|_{H^1}\leq L(R) \|u_0- \tilde u_0\|_{H^1}\; \forall t\in [T_1, T_1+ T(R)].
    \end{equation*}
\end{itemize}
2) For every $T_1\in \mathbb{R}$, for every initial state $ u_0\in H^1(\mathbb{T}^1; \mathbb{S}^k)$, the Cauchy problem   has a unique maximal solution $u$. If $D(u)$ is not equal to $[T_1, +\infty)$, then there exists some $\tau\in (T_1,+\infty)$ such that $D(u)= [T_1, \tau)$ and one has
\begin{equation*}
    \lim_{t\rightarrow \tau^-}\|u(t)\|_{H^1}= +\infty.
\end{equation*}
\end{prop}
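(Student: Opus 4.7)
The plan is to apply a Banach fixed point argument on the space $\mathcal{X}_T$ defined in \eqref{def:mcXT}, reading the closed-loop equation as an open-loop problem whose forcing depends on the unknown through the feedback $F$. For any $T_1\in\R$, $R>0$ and initial datum $u_0$ with $\|u_0\|_{\dot H^1}\leq R$ I would introduce, on the interval $[T_1,T_1+T]$, the mapping $\Gamma:\mathcal{X}_T\to\mathcal{X}_T$ that sends each $v\in\mathcal{X}_T$ to the unique solution $u=\Gamma(v)$ provided by Lemma \ref{lem:forcehmhf} of
\begin{equation*}
\partial_t u-\Delta u=|u_x|^2u+\mathbf{1}_\omega(f_v)^{u^\perp},\qquad u(T_1,\cdot)=u_0(\cdot),\qquad f_v(t,x):=F(t;v(t,\cdot))(x).
\end{equation*}
The Carath\'eodory properties \hyperref[P1]{($\mathcal{P}_1$)}--\hyperref[P3]{($\mathcal{P}_3$)} together with the continuity $v\in C([T_1,T_1+T];H^1)$ make $f_v$ strongly measurable in $t$ with $\|f_v(t)\|_{L^2}\leq C_B(\|v(t)\|_{\dot H^1})$, so $f_v\in L^\infty(T_1,T_1+T;L^2)\subset L^2(T_1,T_1+T;L^2)$ and Lemma \ref{lem:forcehmhf} applies and produces $u\in\mathcal{X}_T$ with values in $\mathbb{S}^k$. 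All constants in \hyperref[P1]{($\mathcal{P}_1$)}--\hyperref[P4]{($\mathcal{P}_4$)} being time-uniform, the entire construction is independent of the particular starting time $T_1$.

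Choosing $\rho(R):=2R+1$ and using the estimates of Lemma \ref{lem:forcehmhf} with forcing bound $C_B(2R+1)\sqrt{T}$, one finds some $T_*(R)>0$ such that the closed ball $B_{T,\rho(R)}\subset\mathcal{X}_T$ is invariant under $\Gamma$ for $T\leq T_*(R)$. For $v_1,v_2\in B_{T,\rho(R)}$, the continuous dependence Lemma \ref{lem:cont-dep} applied to $(\Gamma(v_1),\Gamma(v_2))$ together with \hyperref[P4]{($\mathcal{P}_4$)} and the embedding $\mathcal{X}_T\hookrightarrow C([T_1,T_1+T];H^1)$ yields
\begin{equation*}
\|\Gamma(v_1)-\Gamma(v_2)\|_{\mathcal{X}_T}\leq C_d\,K(\rho(R))\,T^{1/2}\,\|v_1-v_2\|_{\mathcal{X}_T},
\end{equation*}
so shrinking $T$ to some $T(R)\in(0,T_*(R)]$ makes the prefactor strictly less than $1/2$ and the Banach fixed point theorem delivers the unique solution claimed in the first bullet of part 1). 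The same computation applied to two solutions $u,\tilde u$ with initial data $u_0,\tilde u_0$ of $\dot H^1$-norm at most $R$ gives
\begin{equation*}
\|u-\tilde u\|_{\mathcal{X}_{T(R)}}\leq C_d\bigl(\|u_0-\tilde u_0\|_{H^1}+K(\rho(R))\,T(R)^{1/2}\,\|u-\tilde u\|_{\mathcal{X}_{T(R)}}\bigr),
\end{equation*}
and absorbing the last term, after possibly shrinking $T(R)$ once more, produces the Lipschitz constant $L(R)$ of the second bullet.

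For part 2) I would first extend local uniqueness to the intersection of two arbitrary solution intervals by the standard connectedness argument: the set of times where two solutions agree is nonempty at $T_1$, closed by continuity in $H^1$, and open by the local uniqueness statement just proved applied at any agreement time. The maximal solution is then defined in the usual way as the union of all local solutions. Assume $D(u)=[T_1,\tau)$ with $\tau<\infty$ and suppose, for contradiction, that $\|u(t)\|_{H^1}$ stays bounded as $t\to\tau^-$; since $u(t)\in\mathbb{S}^k$ forces $\|u(t)\|_{L^2}^2=2\pi$, this is equivalent to some bound $\|u(t)\|_{\dot H^1}\leq R_0$. Choosing $t_0\in[T_1,\tau)$ with $\tau-t_0<T(R_0)/2$ and applying the local theory starting from $u(t_0)$ at time $t_0$ produces a solution on $[t_0,t_0+T(R_0)]$ which, by uniqueness, extends $u$ strictly past $\tau$ and contradicts maximality. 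Hence $\lim_{t\to\tau^-}\|u(t)\|_{H^1}=+\infty$.

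The main obstacle is not conceptual but lies in the choice of function space and the bookkeeping of constants. Property \hyperref[P4]{($\mathcal{P}_4$)} only controls $F$ in the $H^1$ norm and not in $L^2$, so one is forced to work in the space $\mathcal{X}_T\hookrightarrow C_tH^1_x$; the $L^2_{t,x}$-based Carath\'eodory setup customary for ODEs would fail to close the contraction. One must also verify that both $T(R)$ and $L(R)$ depend on the $\dot H^1$-ball radius only through the monotone functions $C_B(\cdot)$ and $K(\cdot)$ (and the time-uniform constants of Lemmas \ref{lem:forcehmhf}--\ref{lem:cont-dep}), and not through the particular initial datum nor through $T_1$, which is precisely what is needed to drive the blow-up alternative in the last step above.
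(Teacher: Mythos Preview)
Your approach is correct and is precisely the kind of argument the paper has in mind: the paper actually omits the proof of this proposition, stating only that it follows from the open-loop Lemmas~\ref{lem:forcehmhf}--\ref{lem:cont-dep} via ``bootstrap arguments and direct energy estimates'' and pointing to the analogous passage from open-loop to closed-loop well-posedness carried out in \cite{2017-Coron-Rivas-Xiang-APDE}; your Banach fixed point on $\mathcal{X}_T$ built on those two lemmas is exactly such an argument.

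One small quantitative slip to fix: any $u\in C([T_1,T_1+T];H^1(\mathbb{T}^1;\mathbb{S}^k))$ automatically has $\|u(t)\|_{L^2}^2=2\pi$, so $\|u\|_{\mathcal{X}_T}\geq\sqrt{2\pi}$ regardless of $R$. Your choice $\rho(R)=2R+1$ therefore cannot make the ball $B_{T,\rho(R)}$ invariant under $\Gamma$ when $R$ is small (for instance $R<\pi-\tfrac12$). Replacing it by something like $\rho(R)=\sqrt{2\pi+R^2}+R+1$ repairs this without changing anything else in the argument. Also, in the blow-up alternative, the negation of $\lim_{t\to\tau^-}\|u(t)\|_{H^1}=+\infty$ is only $\liminf_{t\to\tau^-}\|u(t)\|_{H^1}<+\infty$, but your extension argument already works under that weaker hypothesis since it uses a single time $t_0$ close to $\tau$.
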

Since the proof of this proposition for closed-loop systems is closely related to the ones of Lemma \ref{lem:forcehmhf}--\ref{lem:cont-dep} for open-loop systems (i.e., it is essentially based on bootstrap arguments and direct energy estimates), we omit it. Note that the transition of well-posedness and continuous dependence theory from open-loop systems to closed-loop systems is a standard problem in the study of stabilization problems since, as typically treated in \cite[Lemma 3 for open-loop systems and Theorems Theorem 7-8 for closed-loop systems]{2017-Coron-Rivas-Xiang-APDE} for KdV equations.

\section{Global approximate controllability}\label{sec:GAC}

\subsection{Part 1: global stability of the harmonic map heat flow}
\label{sec:globalconvergence}

This section is devoted to the proof of Proposition \ref{thm:homopotyconver} concerning the uniform convergence of the flow to harmonic maps. This proposition is also related to \hyperref[Step1]{Step 1} of Section \ref{sec:strategy}. First we present the following auxiliary lemma.
\begin{lem}\label{lem:flux}
    Let $T>0$.  Let $N\geq 0$. There exists a non-decreasing function $b: (0, 1)\rightarrow (0, 1)$ satisfying $\lim_{\delta\rightarrow 0^+} b(\delta)= 0$,    such that for any initial state $u_0$ satisfying, for some $\delta\in(0,1)$,
    \begin{equation}\label{eq:cond:1}
        \int_{\mathbb{T}^1} |u_{0x}(x)|^2 dx \in (2\pi N^2 +\delta, 2\pi (N+1)^2- \delta),
    \end{equation}
    the unique solution of the harmonic map heat flow \eqref{eq:freehmhfs} satisfies
    \begin{equation*}
        \int_0^T \int_{\mathbb{T}^1} |u_t(t, x)|^2 dx dt\geq  b(\delta).
    \end{equation*}
\end{lem}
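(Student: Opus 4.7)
The plan is to argue by contradiction using a compactness argument, exploiting that harmonic maps $\mathbb{T}^1 \to \mathbb{S}^k$ have only the discrete set of energies $\{2\pi M^2 : M \in \mathbb{N}\}$. Suppose the statement fails. Then there exist $\delta_0 \in (0,1)$ and a sequence of initial states $u_0^{(n)} \in H^1(\mathbb{T}^1;\mathbb{S}^k)$ with
$$E(u_0^{(n)}) \in (2\pi N^2 + \delta_0,\; 2\pi(N+1)^2 - \delta_0)$$
whose corresponding solutions $u^{(n)}$ of \eqref{eq:freehmhfs} satisfy $\int_0^T\int_{\mathbb{T}^1} |u_t^{(n)}|^2\,dx\,dt \to 0$.

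Because $E(u_0^{(n)}) \leq 2\pi(N+1)^2$, the \textit{a priori} estimates of Lemma \ref{lem:forcehmhf} (applied with $f=0$) give a uniform bound on $\{u^{(n)}\}$ in $L^\infty(0,T;H^1) \cap L^2(0,T;H^2)$, while \eqref{ine:enedir} gives $\{u_t^{(n)}\}$ bounded in $L^2(0,T;L^2)$. The Aubin--Lions lemma then extracts a subsequence converging to some $u^*$ strongly in $L^2(0,T;H^1)$, weakly in $L^2(0,T;H^2)$, with $u_t^{(n)} \to u_t^*$ weakly in $L^2(0,T;L^2)$. The hypothesis forces $u_t^* = 0$, so $u^*$ is independent of $t$. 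Passing to the limit in the PDE (the quadratic term $|u_x^{(n)}|^2 u^{(n)}$ converges in $L^1_{t,x}$ thanks to the strong $H^1$-in-time convergence combined with the Sobolev bound $H^2 \hookrightarrow W^{1,\infty}$), one obtains $-u^*_{xx} = |u^*_x|^2 u^*$ with $u^* \in \mathbb{S}^k$, i.e.\ $u^*$ is a harmonic map. Differentiating $|u^*|^2 \equiv 1$ twice and substituting in the equation yields that $|u^*_x|$ is constant; after arc-length reparametrization $u^*$ is a great-circle geodesic that closes up after crossing $\mathbb{T}^1$, so $|u^*_x| = M$ for some $M \in \mathbb{N}$ and $E(u^*) = 2\pi M^2$. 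On the other hand, the dissipation identity \eqref{ine:enedir} gives $E(u^{(n)}(t)) = E(u_0^{(n)}) - 2\int_0^t\int |u_t^{(n)}|^2$, hence (along a further subsequence) $E(u^{(n)}(t)) \to E^*$ uniformly in $t$, with $E^* \in [2\pi N^2 + \delta_0,\; 2\pi(N+1)^2 - \delta_0]$; strong convergence in $L^2(0,T;H^1)$ upgrades this to $E(u^*) = E^*$ at almost every $t$. This is impossible since no $2\pi M^2$ lies in that interval. Defining
$$b(\delta) := \inf\Big\{\,\textstyle\int_0^T\int_{\mathbb{T}^1} |u_t|^2\,dx\,dt \;:\; u \text{ solves } \eqref{eq:freehmhfs},\; E(u_0) \in (2\pi N^2 + \delta,\; 2\pi(N+1)^2 - \delta)\Big\}$$
produces a positive non-decreasing function; the condition $b(\delta) \to 0$ as $\delta \to 0^+$ follows from continuous dependence (Lemma \ref{lem:cont-dep}) applied to initial data $u_0^\varepsilon$ converging in $H^1$ to a harmonic map of energy $2\pi N^2$ (or $2\pi(N+1)^2$), which forces the flux $\int_0^T\int|u_t^\varepsilon|^2 \to 0$.

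The main obstacle is the justification of strong convergence strong enough to preserve the energy $E(u^{(n)}(t)) \to E(u^*)$ at the a.e.\ time needed to identify $E(u^*) = E^*$; this is the only point where the quadratic nature of the nonlinearity could bite, and it is handled by interpolating the uniform $L^2(0,T;H^2)$ bound with the $L^2(0,T;H^1)$ strong convergence. The classification of harmonic maps $\mathbb{T}^1 \to \mathbb{S}^k$ and the energy-quantization step are elementary by comparison, and the compactness setup is standard.
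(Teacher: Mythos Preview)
Your proposal is correct and follows essentially the same route as the paper's proof: contradiction, uniform bounds from Lemma~\ref{lem:forcehmhf}, Aubin--Lions compactness, identification of the weak limit as a time-independent harmonic map via $u_t^*=0$, and then the energy-quantization contradiction. The paper handles the last step by integrating in time (using strong $L^2(0,T;H^1)$ convergence to match $\int_0^T E(u^{(n)}(t))\,dt$ with $T\cdot 2\pi\tilde N^2$) rather than your a.e.-in-$t$ argument, and it is slightly more explicit about upgrading compactness to $u_{nx}\to u_x$ strongly in $L^4_{t,x}$ in order to pass to the limit in the nonlinearity; your sketch of that passage is terser but workable. Your extra paragraph on why $b(\delta)\to 0$ is not needed: once the infimum defining $b$ is shown positive and non-decreasing, one may simply replace $b(\delta)$ by $\min(b(\delta),\delta)$.
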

\begin{proof}[Proof of Lemma \ref{lem:flux}]
    The proof is based on a standard compactness argument. Indeed, suppose that this property is not true, then we can find a sequence of initial states  $\{v_n(x)\}_{n\in \mathbb{N}}$ satisfying the condition \eqref{eq:cond:1} such that the corresponding solutions $\{u_n(t, x)\}_{n\in \mathbb{N}}$ of \eqref{eq:freehmhfs} for the initial condition $u_n(0,\cdot)= v_n$ satisfy
      \begin{equation}\label{assum-unt}
        \int_0^T \int_{\mathbb{T}^1} |u_{nt}|^2 dx dt\leq \frac{1}{n}.
    \end{equation}
    Thanks to Lemma \ref{lem:forcehmhf}, we know that $\{u_n\}_{n\in \mathbb{N}}$ are uniformly bounded in $C([0, T]; H^1(\mathbb{T}^1))\cap L^2(0, T; H^2(\mathbb{T}^1))$, which immediately implies that
$\{\partial_t u_n\}_{n\in \mathbb{N}}$ are uniformly bounded in $L^2(0, T; L^2(\mathbb{T}^1))$.
    Hence, there exists some $u\in C([0, T]; H^1(\mathbb{T}^1))\cap L^2(0, T; H^2(\mathbb{T}^1)) $ such that
\begin{align}
\label{w-un02}
u_n&\rightharpoonup u \textrm{ weakly in } L^2(0,T; H^2(\mathbb{T}^1)),
\\
\label{w-dtun00}
 \partial_t u_n&\rightharpoonup \partial_t u \textrm{ weakly in } L^2(0,T; L^2(\mathbb{T}^1)).
\end{align}
The preceding convergence results, combined with the Aubin-Lions lemma and a standard diagonal argument, imply that up to a selection of a subsequence, still denoted by $\{u_n\}_{n\in \mathbb{N}}$,
\begin{equation}
\label{un02-r}
 u_n\rightarrow u \textrm{ strongly in } L^2(0,T; H^{2-r}(\mathbb{T}^1))\; \forall r\in (0,2].
\end{equation}
Since the sequence of functions $\{u_n\}_{n\in \mathbb{N}}$ is uniformly bounded in  $C([0, T]; H^1(\mathbb{T}^1))\cap L^2(0, T; H^2(\mathbb{T}^1))$, we know from direct interpolation that it is also uniformly bounded in
\begin{equation}\label{unLp2-r-new}
    L^p(0, T; H^{2-r}(\mathbb{T}^1))\; \forall r\in (0, 1), \; \forall p\in [2, \frac{2}{1-r}).
\end{equation}
From \eqref{w-dtun00}, \eqref{unLp2-r-new}, and the Aubin-Lions lemma one gets that, up to a selection of a subsequence,
\begin{equation}
\label{un1/4-r}
 u_n\rightarrow u \textrm{ strongly in }  L^p(0, T; H^{2-r}(\mathbb{T}^1))\; \forall r\in (0, 1), \; \forall p\in [2, \frac{2}{1-r}).
\end{equation}
In particular, by selecting $p= 4$ we know that
\begin{equation}
\label{un1/4-r-new}
 u_{nx}\rightarrow u_x \textrm{ strongly in }  L^4(0, T; H^{\frac{1}{2}- \varepsilon}(\mathbb{T}^1))\; \forall \varepsilon\in (0, \frac{1}{2}).
\end{equation}
Thanks to the Sobolev embedding theorem
$H^{1/4}(\mathbb{T}^1)\hookrightarrow L^4(\mathbb{T}^1)$, 
one gets
\begin{equation}
\label{unx44}
 u_{nx}\rightarrow u_x \textrm{ strongly in } L^4(0,T; L^4(\mathbb{T}^1)).
\end{equation}
Due to assumption \eqref{assum-unt} and \eqref{w-dtun00}, one knows that $\partial_t u= 0$, thus $u(t, \cdot)= u(\cdot)$ is time independent. Then,  from \eqref{w-un02}, \eqref{un02-r} with $r=1$, the Sobolev embedding $H^{1}(\mathbb{T}^1)\hookrightarrow L^\infty(\mathbb{T}^1)$, and \eqref{unx44},
\begin{equation*}
    0= -\partial_t u_{n}+ \partial_{xx} u_n+ |u_{nx}|^2 u_n\rightharpoonup  \partial_{xx} u+ |u_{x}|^2 u\textrm{  weakly in } L^1(0,T; L^{2}(\mathbb{T}^1)).
\end{equation*}
Therefore,  $u$ is a harmonic map, which implies that, for some $\tilde N\in \mathbb{N}$,
\begin{equation}
\label{energyu}
\int_{\mathbb{T}^1} |u_x|^2(t,x) dx =2\pi \tilde N^2 \; \forall t \in[0,T].
\end{equation}
Recall that, for every $n$, one has by \eqref{evol-energy}
\begin{equation}
\label{evolution-energy}
\int_{\mathbb{T}^1} |u_{nx}|^2(t, x) dx- \int_{\mathbb{T}^1} |v_{nx}|^2(x) dx= - 2\int_0^t \int_{\mathbb{T}^1} |u_{nt}|^2(t, x)dx dt,
\end{equation}
 which, together with \eqref{eq:cond:1} for $v_n$, \eqref{assum-unt}, and  \eqref{un02-r} with $r=1$, implies that
\begin{equation*}
    \int_0^T \int_{\mathbb{T}^1} |u_x|^2 dx dt= \lim_{n}  \int_0^T \int_{\mathbb{T}^1} |u_{nx}|^2 dx dt\in  T[2\pi N^2 +\delta, 2\pi (N+1)^2- \delta],
\end{equation*}
which leads to a contradiction with \eqref{energyu}. This concludes the proof of Lemma \ref{lem:flux}.
\end{proof}

Armed with this auxiliary lemma we come back to the proof of Proposition \ref{thm:homopotyconver}.
\begin{proof}[Proof of Proposition \ref{thm:homopotyconver}]
The preceding lemma indicates that for any initial state $v(x)\in H^1 (\mathbb{T}^1)$ there is $N\in \mathbb{N}$ such that
\begin{equation}\label{ineq:cnvE}
    E(t)\rightarrow 2\pi N^2.
\end{equation}
Therefore, for any given $\varepsilon>0$,  there exists some $T_0>0$ such that
\begin{gather*}
    E(t)\in [2\pi N^2, 2\pi N^2+ \varepsilon)\; \forall t> T_0.
\end{gather*}
Notice that on $t\in [T_0, T_0+1]$ one has
\begin{equation*}
     \partial_t u- \Delta u- |\partial_x u|^2 u=0,
\end{equation*}
thus
\begin{equation*}
    \frac{d}{dt} E(t)= -2 \langle u_t, u_{xx} \rangle= -2 \langle u_t, u_t \rangle.
\end{equation*}
Therefore,
  \begin{equation*}
        \int_{T_0}^{T_0+1} \int_{\mathbb{T}^1} |u_t|^2 dx dt= \frac{1}{2} \left( E(T_0)- E(T_0+1)\right)\leq \frac{\varepsilon}{2}.
    \end{equation*}
Thus there exists some $t_0\in [T_0, T_0+1]$ such that
\begin{equation*}
    \int_{\mathbb{T}^1} |u_t|^2(t_0, x) dx\leq \varepsilon/2,
\end{equation*}
which, to be combined with the harmonic map heat equation satisfied by $u$, further implies that
\begin{equation}
\label{uxx+ux}
    \int_{\mathbb{T}^1} |u_{xx}+ |u_x|^2 u|^2(t_0, x) dx\leq \varepsilon/2.
\end{equation}

We also know that for any $x_0, x_1\in \mathbb{T}^1$,
\begin{equation*}
    |u_x|^2(t_0, x_0)- |u_x|^2(t_0, x_1)= 2\int_{x_1}^{x_0} u_x \cdot u_{xx} (t_0, x) dx= 2\int_{x_1}^{x_0} u_x \cdot u_{t} (t_0, x) dx= O(\varepsilon^{\frac{1}{2}}).
\end{equation*}
Since
\begin{equation*}
    \int_{\mathbb{T}^1} |u_x|^2(t_0, x) dx\in [2\pi N^2, 2\pi N^2+ \varepsilon),
\end{equation*}
one concludes that
\begin{equation}
    |u_x|^2(t_0, x)= N^2+ O(\varepsilon^{\frac{1}{2}})\; \forall x\in \mathbb{T}^1.
\end{equation}
Therefore, using also \eqref{uxx+ux},
\begin{equation*}
    \int_{\mathbb{T}^1} |u_{xx}+ N^2 u|^2(t_0, x) dx\lesssim \varepsilon.
\end{equation*}
Since $u(t_0, x)$ belongs to $H^2(\mathbb{T}^1)$, we can express it via Fourier series:
\begin{equation*}
    u(t_0, x)= \sum_{n\in \mathbb{Z}} a_n e^{i nx} \; \textrm{ where } a_n= \overline{a_{-n}}\in \mathbb{C}^{k+1}
\end{equation*}
with
\begin{equation*}
    \sum_{n\in \mathbb{Z}} |a_n|^2 n^4< +\infty.
\end{equation*}
One mimics the framework in \cite[Proposition 4.4]{Coron-Krieger-Xiang-1}, and obtains
\begin{equation*}
    \sum_n (N^2- n^2)^2 a_n^2\lesssim \varepsilon.
\end{equation*}
This yields
\begin{equation*}
    |a_n|\lesssim \frac{\varepsilon^{\frac{1}{2}}}{n^2} \text{ uniformly for $n\in \mathbb{Z}\setminus \{-N,N\}$.}
\end{equation*}
Combining these inequalities with the fact that $|u(t_0, x)|= 1$, one has
\begin{equation*}
    |a_N e^{iNx}+ a_{-N} e^{-iNx}|= 1+ O(\varepsilon^{\frac{1}{2}})\;  \forall x\in \mathbb{T}^1.
\end{equation*}
Hence, there exists some harmonic maps $\gamma(x)$ in the form of
\begin{gather*}
    \gamma(x)=b_N e^{iNx}+ \overline{b_{N}} e^{-iNx},
\\
b_N=\frac{1}{2}\left(\alpha_N+ i\beta _N\right)\text{ with }\alpha_N\in \R^{k+1},\, \beta_N\in \R^{k+1}, \, |\alpha_N|=|\beta_N|=1, \text{ and } \alpha_N\cdot \beta_N=0,
\end{gather*}
such that
\begin{equation*}
    |(a_N e^{iNx}+ a_{-N} e^{-iNx})- (b_N e^{iNx}+ \overline{b_N} e^{-iNx})|\lesssim \varepsilon^{\frac{1}{2}}\; \forall x\in \mathbb{T}^1.
\end{equation*}
Hence
\begin{equation*}
    \|u(t_0, x)- \gamma(x)\|_{H^1(\mathbb{T}^1)}\leq \|(a_N e^{iNx}+ a_{-N} e^{-iNx})- \gamma(x)\|_{H^1(\mathbb{T}^1)}+ \|\sum_{n\neq \pm N} a_n e^{inx}\|_{H^{1}(\mathbb{T}^1)}\lesssim \varepsilon^{\frac{1}{2}}.
\end{equation*}
This implies that $u(t_0, x)$ is a ``$O(\varepsilon^{\frac{1}{2}})$-approximate harmonic map", and ends the proof of Theorem \ref{thm:homopotyconver}.
\end{proof}

\subsection{Part 2: the power series expansion method to construct controls to cross the critical energy levels.}\label{subsec:pse}
This part is  related to \hyperref[Step2]{Step 2} of Section \ref{sec:strategy}. The proof is based on the power series expansion method of nonlinear control theory.
We adapt the strategy introduced in  \cite{Coron-Krieger-Xiang-1} concerning the wave maps equation, where we recall that the function $\varphi$ is defined in \eqref{def-varphi}.

\begin{lem}\label{lem_decrea}
Let $N\in \mathbb{N}^*$. Let $T> 0$. There exist an effectively computable constant $\varepsilon_0>0$ and an  explicit function $f_1(t, x)\in L^{\infty}(0, T; L^2(\mathbb{T}^1))$
such that, for any $\varepsilon\in (0, \varepsilon_0]$,
the unique solution of the controlled harmonic map heat flow
\begin{equation}\label{eq_full}
\begin{cases}
   \partial_t \bar u- \Delta \bar u= |\partial_x \bar u|^2 \bar u+  \mathbf{1}_{\omega}(\varepsilon f_1)^{\bar u^{\perp}}, \\
   \bar u(0, x)= \varphi(Nx),
\end{cases}
\end{equation}
satisfies
\begin{equation}  \notag
    E(T)\in (2\pi N^2- 3\pi N^2 \varepsilon^2, 2\pi N^2- \pi N^2\varepsilon^2).
\end{equation}
\end{lem}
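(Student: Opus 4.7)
The plan is to apply the power series expansion method: write $\bar u = u_0 + \varepsilon u_1 + \varepsilon^2 u_2 + R$ around the harmonic map $u_0(x) := \varphi(Nx)$ and exploit the second variation of the Dirichlet energy at $u_0$. Since $u_0$ is a critical point of $E$ among sphere-valued maps, the first-order contribution to $E(\bar u(T))$ will vanish automatically from the geometric constraint $|\bar u|^2 = 1$, and the leading correction is of order $\varepsilon^2$, determined by the Jacobi-type quadratic form at $u_0$. The instability of closed geodesics in transverse directions will allow us to make this second variation negative and of the right magnitude.

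\textbf{Linearization and reduction to a scalar heat equation.} Plugging the ansatz into \eqref{eq_full} and matching powers of $\varepsilon$, the profile $u_1$ must solve
\begin{equation*}
\partial_t u_1 - u_{1xx} - N^2 u_1 - 2\langle u_{0x}, u_{1x}\rangle u_0 = \mathbf{1}_{\omega} f_1^{u_0^{\perp}}, \qquad u_1(0,\cdot) = 0,
\end{equation*}
while differentiating $|\bar u|^2 \equiv 1$ once (resp.\ twice) in $\varepsilon$ at $\varepsilon=0$ gives the pointwise identities $\langle u_0, u_1\rangle = 0$ and $2\langle u_0, u_2\rangle + |u_1|^2 = 0$. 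I will take $f_1$ to be a scalar function times the fixed unit vector $e_3 := (0,0,1,0,\ldots,0)^T$, which lies in $T_{u_0(x)}\mathbb{S}^k$ for every $x$ since $u_0$ only charges the first two coordinates. Writing $u_1 = w\, e_3$ makes $\langle u_{0x}, u_{1x}\rangle \equiv 0$, and the linearized equation decouples to the shifted 1D heat equation
\begin{equation*}
\partial_t w - w_{xx} - N^2 w = \mathbf{1}_{\omega}\, g, \qquad w(0,\cdot) = 0,
\end{equation*}
where $g$ is the scalar control.

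\textbf{Choice of $g$ via heat controllability.} By classical interior controllability for the 1D linear heat equation (with a bounded zero-order potential), for any $T > 0$ one may choose $g \in L^{\infty}(0,T; L^2(\mathbb{T}^1))$ supported in $\omega$ such that $w(T,\cdot) \equiv 1$ on $\mathbb{T}^1$ --- for instance by subtracting a smooth time-extension $\chi(t)$ of the target $w_T\equiv 1$ and killing the resulting zero-initial-data/zero-target pair by null controllability of $\partial_t - \partial_{xx} - N^2$ with an interior control supported in $\omega$. With this choice,
\begin{equation*}
\int_{\mathbb{T}^1}\bigl(|u_{1x}|^2 - N^2|u_1|^2\bigr)(T,x)\,dx = -2\pi N^2.
\end{equation*}

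\textbf{Energy expansion.} Expanding $E(\bar u(T)) = \int |\bar u_x|^2(T)\,dx$ to second order, integrating by parts using $u_{0xx} = -N^2 u_0$, and invoking the two constraint identities above, the linear term in $\varepsilon$ cancels and the quadratic term simplifies:
\begin{equation*}
E(\bar u(T)) = 2\pi N^2 + \varepsilon^2 \int_{\mathbb{T}^1}\bigl(|u_{1x}|^2 - N^2|u_1|^2\bigr)(T,x)\,dx + \mathcal{R}(\varepsilon),
\end{equation*}
where $\mathcal{R}(\varepsilon)$ collects all terms of cubic or higher order in $\varepsilon$. Combined with the previous display, the main term is $-2\pi N^2 \varepsilon^2$, which falls inside the window $(-3\pi N^2 \varepsilon^2, -\pi N^2 \varepsilon^2)$ with room to spare.

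\textbf{Remainder control, the main technical step.} It remains to prove $|\mathcal{R}(\varepsilon)| \lesssim \varepsilon^3$ for $\varepsilon \le \varepsilon_0$. The second profile $u_2$ is defined as the solution of the corresponding order-$\varepsilon^2$ linear parabolic equation (with source polynomial in $u_0, u_1$ and $f_1$), bounded in $\mathcal{X}_T$ independently of $\varepsilon$ by Lemma~\ref{lem:freeheat}. Writing $\bar u = u_0 + \varepsilon u_1 + \varepsilon^2 u_2 + R$ and substituting, one derives a nonlinear parabolic equation for $R$ with zero initial data, whose right-hand side is of size $O(\varepsilon^3)$ in $L^2_tL^2_x$ plus quadratic/cubic terms in $R$ with coefficients bounded uniformly in $\varepsilon$ (thanks to the $H^1 \cap L^2_t H^2$ bounds on $u_0, u_1, u_2$). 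A Banach fixed-point argument combining Lemma~\ref{lem:forcehmhf} and Lemma~\ref{lem:cont-dep} then yields $\|R\|_{\mathcal{X}_T} \lesssim \varepsilon^3$, which controls $\mathcal{R}(\varepsilon)$ in the energy expansion. The main obstacle is bookkeeping the quadratic nonlinearity $|\bar u_x|^2 \bar u$ and the projection $f_1^{\bar u^{\perp}}$ while preserving the $\mathbb{S}^k$-constraint; the standard cure is to rewrite $f_1^{\bar u^{\perp}} = f_1 - \langle f_1, \bar u\rangle \bar u$, absorb $-\langle f_1, \bar u\rangle \bar u$ into a zero-order term, and run the estimates on the resulting semilinear heat equation as in Lemma~\ref{lem:forcehmhf}. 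Taking $\varepsilon_0$ sufficiently small then gives the lemma.
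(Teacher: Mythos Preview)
Your argument is correct and reaches the same numerical value $E(\bar u(T))=2\pi N^2-2\pi N^2\varepsilon^2+O(\varepsilon^3)$ as the paper, but the route is genuinely different. The paper does \emph{not} expand $E(\bar u(T))$ as a spatial integral and does not introduce a second profile $u_2$ at all: it stops the expansion at $\bar u=u_0+\varepsilon u_1+R$ with $\|R\|_{\mathcal{X}_T}=O(\varepsilon^2)$ and $\|R_t\|_{L^2_tL^2_x}=O(\varepsilon^2)$, and instead computes the energy drop through the \emph{dissipation identity}
\[
\tfrac12\bigl(E(T)-E(0)\bigr)=\int_0^T\!\!\int_{\mathbb{T}^1}\Bigl(-|\bar u_t|^2+\varepsilon\langle \bar u_t,f_1^{\bar u^\perp}\rangle\Bigr)\,dx\,dt,
\]
which at leading order becomes $\varepsilon^2\int_0^T\!\int(-|\bar w_t|^2+\bar w_t g)=\varepsilon^2\cdot\tfrac12\bigl[\int(\bar w_x^2-N^2\bar w^2)\bigr]_0^T=-\pi N^2\varepsilon^2$. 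Your approach trades this time integral for a cleaner static second-variation computation $\int(|u_{1x}|^2-N^2|u_1|^2)(T)$, at the cost of having to carry one more order in the expansion and to justify the constraint identity $2\langle u_0,u_2\rangle+|u_1|^2=0$ (which holds because $u_2$ defined by the formal $\varepsilon^2$-equation coincides with the true Taylor coefficient once smooth dependence on $\varepsilon$ is known, or can be checked directly by taking $\langle u_0,\cdot\rangle$ of the $u_2$-equation). The paper's version is slightly more economical in the expansion order but needs the extra $R_t$ bound; yours is closer to the classical Jacobi-field picture and makes the geometric reason for the energy drop (normal instability of the geodesic) more transparent.
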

\begin{figure}[t]
\centering
\includegraphics[width=0.7\linewidth, trim={0cm 0.0cm 0cm 0.0cm},clip]{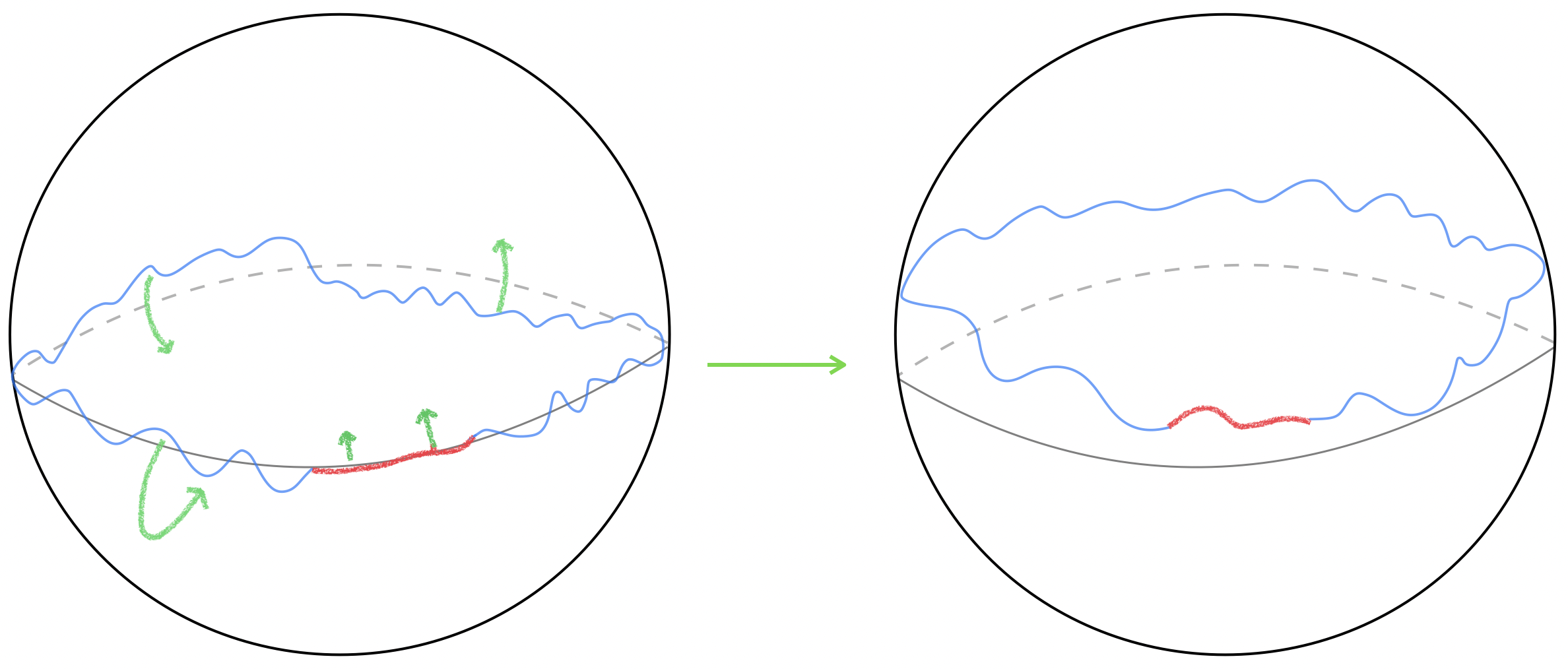}
\caption{The process of using explicit controls to cross the critical energy level set. This picture is also related to  \hyperref[Step2]{Step 2} of Section \ref{sec:strategy}.}
\label{pic3}
\end{figure}
This lemma indicates that with the help of a well-designed control one can reduce the energy of the system if it is not $0$.
This observation, together with the rotation invariance of the forced harmonic map heat equation as well as the continuous dependence property of Lemma \ref{lem:cont-dep}, immediately implies the following:
\begin{prop}\label{Prop:pse}
    Let $N\in \mathbb{N}^*$. Let $T>0$. There exist some effectively computable constant $\nu_1, \varepsilon_1>0$ and an explicit function $h\in L^{\infty}(0, T; L^2(\mathbb{T}^1; \mathbb{R}^{k+1}))$  such that, for any {\it $\nu_1$-approximate harmonic maps}, namely $v\in \mathcal{Q}_{\nu_1}$, such that $E(v)\in (2\pi N^2-1, 2\pi N^2+1)$ we can find  an explicit rotation matrix $A\in O(k+1)$ such that the solution of the controlled harmonic map heat flow equation
    \begin{equation*}
        \begin{cases}
            \partial_t u- \Delta u= |\partial_x u|^2 u+ \mathbf{1}_{\omega} (A h)^{u^{\perp}}, \\
            u(0, x)= v(x),
        \end{cases}
    \end{equation*}
    satisfies
    \begin{equation*}
        \int_{\mathbb{T}^1} |u_x(T, x)|^2 dx\in (2\pi N^2- 10 \varepsilon_1, 2\pi N^2- \varepsilon_1).
    \end{equation*}
\end{prop}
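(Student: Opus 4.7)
The plan is to use the rotation invariance of the controlled harmonic map heat flow to reduce the problem to the canonical harmonic map $\varphi(N\cdot)$ treated in Lemma \ref{lem_decrea}, and then to use the continuous dependence property of Lemma \ref{lem:cont-dep} to pass from an exact harmonic map to a nearby $\nu_1$-approximate one. First, given $v \in \mathcal{Q}_{\nu_1}$ with $E(v) \in (2\pi N^2-1,\, 2\pi N^2+1)$, I would select a harmonic map $\Phi$ with $\|v-\Phi\|_{H^1}\leq \nu_1$. The elementary bound
$$|E(v) - E(\Phi)| \leq \bigl(\|v_x\|_{L^2}+\|\Phi_x\|_{L^2}\bigr)\,\|v_x-\Phi_x\|_{L^2} \leq C_N \nu_1,$$
combined with the fact that the energy of any harmonic map $\mathbb{T}^1\to \mathbb{S}^k$ takes the discrete value $2\pi M^2$ for some $M\in \mathbb{N}$, and that these values are separated by at least $2\pi$ around $2\pi N^2$ (since $N\geq 1$), forces $E(\Phi)=2\pi N^2$ as soon as $\nu_1$ is small enough. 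The characterization of harmonic maps stated just after \eqref{def-varphi} then provides $A\in O(k+1)$ with $\Phi(x)=A\varphi(Nx)$ for all $x\in\mathbb{T}^1$.

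Next, I fix $\varepsilon\in (0,\varepsilon_0]$ from Lemma \ref{lem_decrea} and set $h:=\varepsilon f_1$. Let $\bar u$ denote the solution of \eqref{eq_full} on $[0,T]$, so that $E(\bar u(T))\in (2\pi N^2-3\pi N^2\varepsilon^2,\,2\pi N^2-\pi N^2\varepsilon^2)$. By the rotation invariance recorded in the preliminary section, the function $A\bar u$ solves the controlled harmonic map heat flow with initial datum $\Phi=A\varphi(N\cdot)$ and control $(Ah)^{(A\bar u)^\perp}$, and inherits the same terminal energy. This realises the desired energy drop, but starting from $\Phi$ rather than from the actual initial state $v$.

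To remedy this, I would compare with the solution $u$ of the controlled equation starting from $v$ with the same control $(Ah)^{u^{\perp}}$. Since $\|v-\Phi\|_{H^1}\leq \nu_1$ and the same applied force $Ah$ is used, Lemma \ref{lem:cont-dep} (applied with a uniform $M$ controlling the $H^1$-norms of the initial data and the $L^2$-norm of $Ah$) gives $\|u-A\bar u\|_{\mathcal{X}_T}\leq C_d\,\nu_1$, and hence
$$|E(u(T))-E(A\bar u(T))|\leq C_d'\,\nu_1$$
for some $C_d'$ depending on $N$, $T$ and $\varepsilon$. Setting $\varepsilon_1:=\tfrac{1}{2}\pi N^2\varepsilon^2$, fixing $\varepsilon$ small enough for Lemma \ref{lem_decrea} to apply and then shrinking $\nu_1$ so that $C_d'\nu_1<\varepsilon_1/2$, I obtain $E(u(T))\in (2\pi N^2-10\varepsilon_1,\,2\pi N^2-\varepsilon_1)$, as required.

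The main obstacle, more conceptual than technical, lies in the very first step: asserting that any $\nu_1$-approximate harmonic map with energy near $2\pi N^2$ is actually close to a harmonic map of energy \emph{exactly} $2\pi N^2$. This rests on the discreteness of the admissible energy levels $\{2\pi M^2:M\in\mathbb{N}\}$ and on quantifying $\nu_1$ against the spectral gap to the nearest other level. Once this is granted, everything else is careful ordering of the smallness parameters: first $\varepsilon$, then $\varepsilon_1$, then $\nu_1$.
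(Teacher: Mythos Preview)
Your proposal is correct and follows exactly the approach the paper indicates: the paper does not give a detailed argument for Proposition~\ref{Prop:pse} but states just before it that the result ``immediately'' follows from Lemma~\ref{lem_decrea} together with the rotation invariance of the forced harmonic map heat flow and the continuous dependence property of Lemma~\ref{lem:cont-dep}, and your write-up supplies precisely these three steps in the right order (identify the nearby harmonic map of energy $2\pi N^2$, rotate to the canonical $\varphi(N\cdot)$, then use Lemma~\ref{lem:cont-dep} to absorb the $\nu_1$-perturbation of the initial datum).
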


Now we turn to the proof of  Lemma \ref{lem_decrea}.
\begin{proof}[Proof of Lemma \ref{lem_decrea}]
Notice that the control $\varepsilon f_1$ is spatially supported in $\omega$, thus $\mathbf{1}_{\omega}(\varepsilon f_1)^{\bar u^{\perp}}$ equals $(\varepsilon f_1)^{\bar u^{\perp}}$ and the equation \eqref{eq_full} becomes
\begin{equation}\label{eq_full_re}
\begin{cases}
   \partial_t \bar u- \Delta \bar u= |\partial_x \bar u|^2 \bar u+  (\varepsilon f_1)^{\bar u^{\perp}}, \\
   \bar u(0, x)= \varphi(Nx).
\end{cases}
\end{equation}
This proof relies on  the  power series expansion method. We first formally illustrate the main idea, while the rigorous proof will be rendered later on. Roughly speaking, for  $ \bar u$ and $f$ we assume that
\begin{align*}
     \bar u&:= \bar u_0+ \varepsilon \bar u_1+ \varepsilon^2 \bar u_2+\ldots  \;  \textrm{ with }  \bar u_i= ( \bar u_i^1,  \bar u_i^2,  \bar u_i^3,\ldots , \bar u^{k+1}_i)^T, \\
     f&:= \varepsilon f_1
 \;\textrm{ with }  \; f_1= (0, 0, f_1^3, \ldots , f_1^{k+1})^T.
\end{align*}
By substituting these expansions into equation \eqref{eq_full_re} one deduces the equation that governs the zeroth  order term $\bar u_0$:
\begin{equation*}
\begin{cases}
 \partial_t \bar u_0- \Delta \bar u_0= |\partial_x \bar u_{0}|^2 \bar u_0, \\
 u_0(0, \cdot)= \varphi(N \cdot),
\end{cases}
\end{equation*}
indicating that $u_0(t, x)= \varphi(Nx) \; \forall x\in \mathbb{T}^1$.  One further finds that the first order term  $\bar u_1$ satisfies:
\begin{equation*}
\begin{cases}
\partial_t \bar u_1- \Delta \bar u_1
    =  2 ( \bar u_{0x} \cdot  \bar u_{1x})  \bar u_0+ |\bar u_{0x}|^2 u_1+   f_1- (f_1 \cdot  \bar u_0)  \bar u_0, \\
     \bar u_1(0)= 0.
     \end{cases}
\end{equation*}
Thus
\begin{equation*}
\partial_t \begin{pmatrix}
    \bar u_1^1\\\bar u_1^2\\\bar u_1^3\\ \ldots  \\\bar u_1^{k+1}
\end{pmatrix}
- \Delta \begin{pmatrix}
    \bar u_1^1\\\bar u_1^2\\\bar u_1^3\\ \ldots  \\\bar u_1^{k+1}
\end{pmatrix}=
N\left(-(\sin{N x}) u^1_{1x}+ (\cos{N x}) u^2_{1x}\right) \begin{pmatrix}
    \cos{Nx}\\ \sin{Nx}\\0\\ \ldots  \\0
\end{pmatrix}+ N^2\begin{pmatrix}
    \bar u_1^1\\\bar u_1^2\\\bar u_1^3\\ \ldots  \\\bar u_1^{k+1}
\end{pmatrix}+
\begin{pmatrix}
    0\\0\\f^3_1\\ \ldots  \\f^{k+1}_1
\end{pmatrix}
\end{equation*}
This implies that $\bar u^1_1= \bar u^2_1=0$ and that
\begin{equation*}
    \partial_t \bar u^i_1- \Delta \bar u^i_1- N^2 \bar u^i_1= f^i_1 \;  \forall i=3, \ldots , k+1.
\end{equation*}
Observe the following approximate controllability result on the one-dimensional heat equation. 
 \begin{lem}
 Let $T>0$. There exists an explicit control  $g\in L^{\infty}(0, T; L^2(\mathbb{T}^1; \mathbb{R}))$  such that  the unique solution $\bar w$ of
 \begin{equation}\label{eq_v0}
 \begin{cases}
     \partial_t \bar w- \Delta \bar w- N^2 \bar w= \mathbf{1}_{\omega} g, \\
     \bar w(0, x)= 0,
\end{cases}
 \end{equation}
 satisfies $\bar w(T, x)= 1$. Moreover,
 \begin{equation*}
     \int_0^T \int_{\mathbb{T}^1} \langle \bar w_t, -\bar w_t+ g\rangle dx dt= -\pi N^2.
 \end{equation*}
 \end{lem}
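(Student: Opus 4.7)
The plan is to split the statement into the controllability assertion (existence of $g$) and the energy identity; the latter is a one-line integration by parts once the former is in hand.

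For the controllability assertion, I observe that the equation $\partial_t \bar w - \partial_x^2 \bar w - N^2 \bar w = \mathbf{1}_\omega g$ on $\mathbb{T}^1$ is the standard 1D linear heat equation with a bounded zeroth-order term and an interior control localized on the nonempty open set $\omega$. This system is null controllable in arbitrarily small time $T>0$: the perturbation $-N^2$ is a compact, bounded term which does not affect the observability inequality underlying the Lebeau--Robbiano / global Carleman method already cited in the paper. By the standard duality between null controllability and reachability, combined with the smoothing effect, every sufficiently smooth state lies in the reachable set from $0$ at time $T$; in particular the constant state $\bar w \equiv 1$ is reachable. To produce an explicit $g$ I would use the Fattorini--Russell moment method: expanding $\bar w(t,x) = \sum_{n \in \mathbb{Z}} a_n(t) e^{inx}/\sqrt{2\pi}$, the PDE decouples as $a_n' + (n^2 - N^2) a_n = g_n(t)$, with $a_n(0)=0$ and target $a_n(T) = \sqrt{2\pi}\,\delta_{n,0}$; this reduces to a moment problem against the family $\{e^{-(n^2-N^2)(T-s)}\}_{n \in \mathbb{Z}}$ on $(0,T)$, for which a biorthogonal family producing $g \in L^\infty(0,T;L^2(\mathbb{T}^1))$ supported in $\omega$ is classical. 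The finitely many unstable modes $|n| < N$ (with positive growth rate) are harmless because there are only finitely many of them and they can be steered by a finite-dimensional control correction.

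For the energy identity, multiply the PDE by $\bar w_t$ and integrate over $[0,T]\times\mathbb{T}^1$:
\begin{equation*}
\int_0^T \!\!\! \int_{\mathbb{T}^1} |\bar w_t|^2 \, dx \, dt = \int_0^T \!\!\! \int_{\mathbb{T}^1} \bar w_t \bar w_{xx} \, dx \, dt + N^2 \!\int_0^T \!\!\! \int_{\mathbb{T}^1} \bar w_t \bar w \, dx \, dt + \int_0^T \!\!\! \int_{\mathbb{T}^1} \bar w_t \mathbf{1}_\omega g \, dx \, dt.
\end{equation*}
Integration by parts in $x$ (no boundary terms by periodicity) gives
\begin{equation*}
\int_{\mathbb{T}^1} \bar w_t \bar w_{xx} \, dx = -\frac{1}{2} \frac{d}{dt} \|\bar w_x\|_{L^2}^2, \qquad \int_{\mathbb{T}^1} \bar w_t \bar w \, dx = \frac{1}{2} \frac{d}{dt} \|\bar w\|_{L^2}^2.
\end{equation*}
The boundary conditions $\bar w(0,\cdot) \equiv 0$ and $\bar w(T,\cdot) \equiv 1$ yield $\bar w_x(0) = \bar w_x(T) = 0$, $\|\bar w(0)\|_{L^2}^2 = 0$, and $\|\bar w(T)\|_{L^2}^2 = 2\pi$. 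Hence the first boundary contribution vanishes, the second equals $\pi N^2$, and rearranging (using $\mathbf{1}_\omega g = g$ since $g$ is supported in $\omega$) produces precisely $\int_0^T \int_{\mathbb{T}^1} \langle \bar w_t, -\bar w_t + g \rangle \, dx \, dt = -\pi N^2$.

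The bulk of the work is the explicit construction of $g$ with the stated $L^\infty_t L^2_x$ regularity; the energy identity is a straightforward computation, provided parabolic regularity delivers $\bar w_t \in L^2(0,T;L^2(\mathbb{T}^1))$ (which follows from $g \in L^\infty(0,T;L^2(\mathbb{T}^1))$ and the vanishing initial data). As an alternative to the moment method, one could use a flatness / trajectory-shifting construction: pick a smooth $\bar W(t,x)$ with $\bar W(0) \equiv 0$, $\bar W(T) \equiv 1$, arrange (by solving the homogeneous equation on $\mathbb{T}^1 \setminus \omega$ with appropriate boundary data, in the spirit of Russell's boundary-to-interior reduction) for its residual to be supported in $\omega$, and read off $g$ from the PDE.
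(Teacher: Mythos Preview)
Your energy identity is exactly the paper's computation: multiply by $\bar w_t$, integrate, and read off the boundary terms. Nothing to add there.

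For the controllability part you take a genuinely different, heavier route than the paper. The paper does not invoke the moment method or general reachability at all; instead it makes a two-line reduction to the \emph{null} controllability of the plain heat equation. Setting $y(t,x):=\bar w(t,x)e^{-N^2 t}$ kills the zeroth-order term, so $y_t-\Delta y=e^{-N^2 t}g$, and the target $\bar w(T)=1$ becomes $y(T)=e^{-N^2 T}$, a constant. Since constants are stationary for the pure heat equation, the shift $z:=y-e^{-N^2 T}$ gives a standard null-controllability problem $z_t-\Delta z=e^{-N^2 t}g$, $z(0)=-e^{-N^2 T}$, $z(T)=0$, for which one quotes the classical results. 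This avoids any discussion of unstable modes, biorthogonal families, or the structure of the reachable set.

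Your approach is not wrong, but two of your shortcuts deserve care. First, the claim that ``every sufficiently smooth state lies in the reachable set from $0$'' is not true in general for parabolic equations and is not the right justification here; what makes $1$ reachable is precisely that it lies on a free trajectory (namely $t\mapsto e^{N^2(t-T)}$), which is exactly the paper's observation in disguise. Second, in your moment-method sketch the $g_n(t)$ are the Fourier coefficients of $\mathbf 1_\omega g$, not free scalar controls, so the ODEs do not literally decouple; the standard fix is to take $g(t,x)=h(t)\psi(x)$ with $\psi$ supported in $\omega$ and solve the resulting moment problem for $h$. With that adjustment the argument goes through, but the paper's substitution is shorter and more transparent.
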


\begin{proof}
Again, we first remove the $\mathbf{1}_{\omega}$ term in the equation \eqref{eq_v0} as the control $g$ is supported in the spatial domain $\omega$.
The preceding energy equality comes from direct energy estimate:
\begin{equation*}
    \frac{1}{2}\frac{d}{dt} \int_{\mathbb{T}^1} \left(\bar w_x^2- N^2 \bar w^2 \right)dx= \int_{\mathbb{T}^1} \langle \bar w_t, -\bar w_t+ g\rangle dx.
\end{equation*}
Thus, it suffices to show the existence of control that steers the state from 0 to 1. Notice that since $1$ is not a stationary state of the system \eqref{eq_v0},  this control result is not a direct consequence of the well-known null controllability of heat equations. By defining
\begin{equation*}
    y(t, x):= \bar w(t, x) e^{-N^2 t} \; \forall t\in [0, T], \;\forall x\in \mathbb{T}^1,
\end{equation*}
one obtains
\begin{equation*}
    y_t- \Delta y= e^{- N^2 t} g  \; \forall t\in [0, T],\; \forall x\in \mathbb{T}^1.
\end{equation*}
Hence it suffices to control the state $y(t, \cdot)$ from 0 to $e^{- N^2 T}$. We further define
\begin{equation*}
    z(t, x):= y(t, x)- e^{- N^2 T} \; \forall t\in [0, T], \;\forall x\in \mathbb{T}^1,
\end{equation*}
and obtains
\begin{equation*}
\begin{cases}
 z_t- \Delta z= e^{- N^2 t} g  \; \forall t\in [0, T],\; \forall x\in \mathbb{T}^1, \\
 z(0, x)= - e^{- N^2 T}.
\end{cases}
\end{equation*}
Thanks to the null controllability of the heat equation, see for example \cite{Lebeau-Robbiano-CPDE, Xiang-heat-2020}, we can construct an explicit control $g$ supported in $[0, T]\times \omega$ such that the unique solution satisfies $z(T, \cdot)= 0$.  This finishes the proof of the lemma.
\end{proof}

By selecting
\begin{equation*}
    f^3_1:= g, f^4_1=\ldots = f^{k+1}_1= 0,
\end{equation*}
one obtains
\begin{equation*}
    \bar u^3_1(T, x)= 1, \bar u^4_1(T, x)=\ldots  = \bar u^{k+1}_1(T, x)= 0
\end{equation*}
and one can further prove the following energy estimate:
\begin{lem}\label{lem-en-es}
The solution $\bar u$ of the equation \eqref{eq_full} satisfies
\begin{equation}\label{en-es-varep}
    E(T)- E(0)= -2\pi N^2 \varepsilon^2 + O(\varepsilon^3).
\end{equation}
\end{lem}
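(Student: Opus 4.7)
The plan is to obtain a clean energy dissipation identity for \eqref{eq_full}, plug in the power series expansion set up in the lines preceding the lemma, and read off the coefficient of $\varepsilon^2$. First, testing the equation against $\bar u_t$ and using the pointwise identities $\bar u \cdot \bar u_t = 0$ (since $|\bar u|=1$) and $\bar u_t \cdot (\varepsilon f_1)^{\bar u^{\perp}} = \varepsilon \bar u_t \cdot f_1$ (since $\bar u_t \in T_{\bar u}\Sph^k$), together with the identity $\tfrac{1}{2}\tfrac{d}{dt}E(t) = -\int_{\mathbb{T}^1} \bar u_t \cdot \bar u_{xx}\,dx$, the harmonic-map-heat structure gives
\begin{equation*}
\frac{1}{2}\frac{d}{dt}E(t) \;=\; -\int_{\mathbb{T}^1}|\bar u_t|^2\,dx \;+\; \varepsilon\int_{\mathbb{T}^1} \bar u_t \cdot f_1 \,dx,
\end{equation*}
so that integrating from $0$ to $T$ yields
\begin{equation*}
\tfrac{1}{2}\bigl(E(T)-E(0)\bigr) \;=\; \int_0^T\!\!\int_{\mathbb{T}^1}\bigl\langle \bar u_t,\, -\bar u_t+\varepsilon f_1\bigr\rangle \,dx\,dt.
\end{equation*}

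Next, I would make the formal expansion $\bar u = \bar u_0+\varepsilon \bar u_1+\varepsilon^2 R_\varepsilon$ rigorous, with $\bar u_0(x)=\varphi(Nx)$ and $\bar u_1$ the explicit solution computed above (only the third component $\bar u_1^3=\bar w$ is non-zero). By substituting this ansatz into \eqref{eq_full_re} one derives an equation for $R_\varepsilon$ whose source terms are $O(1)$ in $\mathcal{X}_T$; the well-posedness Lemma \ref{lem:forcehmhf} together with the continuous dependence Lemma \ref{lem:cont-dep} then bound $\|R_\varepsilon\|_{\mathcal{X}_T}$ uniformly for $\varepsilon\in(0,\varepsilon_0]$. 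In particular $\bar u_t = \varepsilon \bar u_{1t}+\varepsilon^2 \partial_t R_\varepsilon$ with $\partial_t R_\varepsilon$ bounded in $L^2(0,T;L^2)$.

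Plugging this expansion into the energy identity, the $O(\varepsilon^0)$ and $O(\varepsilon^1)$ contributions vanish (since $\bar u_{0t}\equiv 0$), and the $O(\varepsilon^2)$ term is exactly
\begin{equation*}
\varepsilon^2 \int_0^T\!\!\int_{\mathbb{T}^1}\bigl\langle \bar w_t,\, -\bar w_t+g\bigr\rangle\, dx\,dt \;=\; -\pi N^2\,\varepsilon^2,
\end{equation*}
by the scalar heat-control lemma just proved. All remaining terms involve at least one factor of $\varepsilon R_\varepsilon$ or $\varepsilon^2 \partial_t R_\varepsilon$ paired against quantities bounded in $L^2(0,T;L^2)$, so they contribute $O(\varepsilon^3)$. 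This gives $\tfrac{1}{2}(E(T)-E(0))=-\pi N^2\varepsilon^2+O(\varepsilon^3)$, which is \eqref{en-es-varep}.

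The main obstacle is the rigorous bookkeeping of the remainder: one must verify that the nonlinear terms $|\bar u_x|^2\bar u$ and the projection $(f_1)^{\bar u^{\perp}}=f_1-(f_1\cdot \bar u)\bar u$ expand consistently, i.e.\ that $|\bar u_x|^2\bar u-|\bar u_{0x}|^2\bar u_0$ is $O(\varepsilon)$ in $\mathcal{X}_T$ with the first-order correction matching the equation used to define $\bar u_1$, and that the resulting $R_\varepsilon$-equation is subcritical enough to apply Lemma \ref{lem:forcehmhf}. Once these bootstraps are in place, the energy identity and the scalar heat lemma combine to give the claim immediately; the geometric structure (the projection $(\cdot)^{u^{\perp}}$) is what makes the dissipation identity so clean in the first place.
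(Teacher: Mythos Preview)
Your proposal is correct and follows essentially the same route as the paper: derive the energy dissipation identity, expand $\bar u = \bar u_0 + \varepsilon \bar u_1 + R$ with $\bar u_{0t}=0$, bound the remainder to order $\varepsilon^2$ in $\mathcal{X}_T$ (and then $R_t$ in $L^2_{t,x}$) via a bootstrap, and read off the $\varepsilon^2$ coefficient from the scalar heat lemma. One minor correction: since $R$ does not take values in $\Sph^k$, the remainder estimate uses the linear inhomogeneous heat bound (Lemma~\ref{lem:freeheat}) rather than Lemma~\ref{lem:forcehmhf}; also, your upfront simplification $\bar u_t \cdot (\varepsilon f_1)^{\bar u^\perp} = \varepsilon\,\bar u_t \cdot f_1$ is slightly cleaner than the paper's bookkeeping, which carries the projection term through the expansion before discarding it.
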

\begin{proof}[Proof of Lemma \ref{lem-en-es}]
  Let us define the reminder term $R:= \bar u- \bar u_0- \varepsilon \bar u_1$. One can show that
  \begin{equation}
      \|R\|_{\mathcal{X}_T}\lesssim \varepsilon^2.
  \end{equation}
  Indeed, one easily obtains from the definition of $\bar u, \bar u_0, \bar u_1, R$ and Lemmas \ref{lem:freeheat}--\ref{lem:forcehmhf}   that
  \begin{equation*}
      \|\bar u\|_{\mathcal{X}_T}+   \|\bar u_0\|_{\mathcal{X}_T}+  \|\bar u_1\|_{\mathcal{X}_T}+   \|R\|_{\mathcal{X}_T}\lesssim 1
  \end{equation*}
  and that
\begin{equation*}
    \|(\bar u_t, \bar u_{1t}, R_t)\|_{L^2(0, T; L^2(\mathbb{T}^1))}\lesssim 1.
\end{equation*}

By comparing the equations on $\bar u, \bar u_0$ and $\bar u_1$ we know that $R$ satisfies, where, to simplify the notations we denote $(\bar u, \bar u_0, \bar u_1)$ by $(u, u_0, u_1)$,
\begin{align*}
    \partial_t R- \Delta R&= |u_x|^2u+ (\varepsilon f_1)^{u^{\perp}}- |u_{0x}|^2 u_0- \varepsilon \left( 2(u_{0x\cdot u_{1x}}) u_0+ |u_{0x}|^2 u_1+ f_1- (f_1\cdot u_0) u_0\right),\\
    &= \langle u_{0x}+ \varepsilon u_{1x}+ R_x,  u_{0x}+ \varepsilon u_{1x}+ R_x\rangle ( u_{0x}+ \varepsilon u_{1x}+ R)+ (\varepsilon f_1)^{u^{\perp}}- |u_{0x}|^2 u_0\\
    &\;\;\;\;\;\;\;\;\;\; - \varepsilon \left( 2(u_{0x\cdot u_{1x}}) u_0+ |u_{0x}|^2 u_1+ f_1- (f_1\cdot u_0) u_0\right),\\
    &= \left(|R_x|^2+ 2R_x\cdot (u_{0x}+ \varepsilon u_{1x})\right) u+ |u_{0x}+ \varepsilon u_{1x}|^2 R- \varepsilon \langle f_1, u\rangle R- \varepsilon \langle f_1, R\rangle (u_0+ \varepsilon u_1)\\
    & \;\;\;\;\;\;+ \varepsilon^2 \left(   |u_{1x}|^2 (u_0+ \varepsilon u_1)+ 2 (u_{0x}\cdot u_{1x}) u_1- (f_1\cdot u_1)(u_0+ \varepsilon u_1)- (f_1\cdot u_0) u_1 \right)\\
    &= N(R)+ N_2(\varepsilon),
\end{align*}
where
\begin{align*}
    N(R)&:= \left(|R_x|^2+ 2R_x\cdot (u_{0x}+ \varepsilon u_{1x})\right) u+ |u_{0x}+ \varepsilon u_{1x}|^2 R- \varepsilon \langle f_1, u\rangle R- \varepsilon \langle f_1, R\rangle (u_0+ \varepsilon u_1), \\
    N_2(\varepsilon)&:= \varepsilon^2 \left(   |u_{1x}|^2 (u_0+ \varepsilon u_1)+ 2 (u_{0x}\cdot u_{1x}) u_1- (f_1\cdot u_1)(u_0+ \varepsilon u_1)- (f_1\cdot u_0) u_1 \right).
\end{align*}
According Lemma \ref{lem:freeheat}, for any $T_1\in (0, T]$  the reminder term $R$ satisfies
\begin{align*}
    \|R\|_{\mathcal{X}_{T_1}}\lesssim \|R(0)\|_{H^1}+ \|N(R)+ N_2(\varepsilon)\|_{L^2(0, T_1; L^2(\mathbb{T}^1))}.
\end{align*}
Recall that
\begin{equation*}
    \|(u_x, u_{0x}, u_{1x}, R_x)\|_{L^2(0, T_1; L^{\infty}(\mathbb{T}^1))}\lesssim T_1^{\frac{1}{4}}.
\end{equation*}
Concerning $N(R)$ one has
\begin{gather*}
    \|\left(|R_x|^2+ 2R_x\cdot (u_{0x}+ \varepsilon u_{1x})\right) u\|_{L^2(0, T_1; L^2(\mathbb{T}^1))}\lesssim  T_1^{\frac{1}{4}}\|R\|_{\mathcal{X}_{T_1}}, \\
     \||u_{0x}+ \varepsilon u_{1x}|^2 R\|_{L^2(0, T_1; L^2(\mathbb{T}^1))}\lesssim T_1^{\frac{1}{4}}\|R\|_{L^{\infty}((0, T_1)\times \mathbb{T}^1)} \lesssim  T_1^{\frac{1}{4}}\|R\|_{\mathcal{X}_{T_1}},\\
      \|\varepsilon \langle f_1, u\rangle R\|_{L^2(0, T_1; L^2(\mathbb{T}^1))} \lesssim \varepsilon\|f_1\|_{L^{\infty}(0, T_1; L^2)}\|u\|_{L^{2}(0, T_1; L^{\infty})}\|R\|_{L^{\infty}((0, T_1)\times \mathbb{T}^1)} \lesssim  T_1^{\frac{1}{4}}\|R\|_{\mathcal{X}_{T_1}},\\
       \|\varepsilon \langle f_1, R\rangle (u_0+ \varepsilon u_1)\|_{L^2(0, T_1; L^2(\mathbb{T}^1))} \lesssim  T_1^{\frac{1}{4}}\|R\|_{\mathcal{X}_{T_1}},
\end{gather*}
while for $N_1(\varepsilon)$ one has
\begin{gather*}
    \| |u_{1x}|^2 (u_0+ \varepsilon u_1)\|_{L^2(0, T_1; L^2(\mathbb{T}^1))}\lesssim T_1^{\frac{1}{4}}, \\
     \| (u_{0x}\cdot u_{1x}) u_1\|_{L^2(0, T_1; L^2(\mathbb{T}^1))}\lesssim T_1^{\frac{1}{4}},\\
     \| (f_1\cdot u_1)(u_0+ \varepsilon u_1)\|_{L^2(0, T_1; L^2(\mathbb{T}^1))}\lesssim T_1^{\frac{1}{4}},\\
     \|(f_1\cdot u_0) u_1\|_{L^2(0, T_1; L^2(\mathbb{T}^1))}\lesssim T_1^{\frac{1}{4}}.
\end{gather*}
Hence,
\begin{align*}
     \|R\|_{\mathcal{X}_{T_1}}&\lesssim  \|R(0)\|_{H^1}+ \|N(R)+ N_2(\varepsilon)\|_{L^2(0, T_1; L^2(\mathbb{T}^1))} \\
     &\lesssim \|R(0)\|_{H^1}+ T_1^{\frac{1}{4}}\left( \varepsilon^2+  \|R\|_{\mathcal{X}_{T_1}}\right).
\end{align*}
Selecting $T_1$ small enough, one has
 \begin{equation*}
\|R\|_{\mathcal{X}_{T_1}}\lesssim \|R(0)\|_{H^1}+ T_1^{\frac{1}{4}} \varepsilon^2.
 \end{equation*}
 We iterate this bootstrap argument  and get the required estimate on $R$:
 \begin{equation}
     \|R\|_{\mathcal{X}_{T}}\lesssim \|R(0)\|_{H^1}+ T\varepsilon^2\lesssim \varepsilon^2.
 \end{equation}
By substituting the preceding estimate into the equation on $R$ one further obtains
\begin{align*}
    \|R_t\|_{L^2(0, T; L^2(\mathbb{T}^1))}= \|R_{xx}+ N(R)+ N_2(\varepsilon)\|_{L^2(0, T; L^2(\mathbb{T}^1))}\lesssim \varepsilon^2.
\end{align*}

Now, we come back to the proof of the required  estimate on $E(t)$. One has
\begin{align*}
\frac{1}{2} \frac{d}{dt} E(t)= - \int_{\mathbb{T}^1} \langle u_t, u_{xx} \rangle dx
= \int_{\mathbb{T}^1} -|u_t|^2+ \varepsilon u_t\cdot f_1^{u^{\perp}} dx,
\end{align*}
which implies that
\begin{align*}
   &\;\;\;\;\frac{1}{2} \Big(E(T)- E(0)\Big)\\
   &=  \int_0^T \int_{\mathbb{T}^1} -|u_t|^2+ \varepsilon u_t\cdot f_1^{u^{\perp}} dx dt \\
   &= \int_0^T \int_{\mathbb{T}^1}  - |\varepsilon u_{1t}+ R_t|^2+ \varepsilon \langle \varepsilon u_{1t}+ R_t, f_1- (f_1 \cdot u) u\rangle      dx dt \\
   &= \int_0^T \int_{\mathbb{T}^1}  -|\varepsilon u_{1t}|^2+ \varepsilon^2 \langle u_{1t}, f_1\rangle  - 2(\varepsilon u_{1t}\cdot  R_t)- |R_t|^2- \varepsilon \langle \varepsilon u_{1t}+ R_t,  (f_1 \cdot u) u\rangle+ \varepsilon (R_t\cdot f_1)      dx dt \\
    &= \int_0^T \int_{\mathbb{T}^1}  -|\varepsilon u_{1t}|^2+ \varepsilon^2 \langle u_{1t}, f_1\rangle  - \varepsilon \langle \varepsilon u_{1t},  (f_1 \cdot u) u\rangle     dx dt+ O(\varepsilon^3)\\
    &= \int_0^T \int_{\mathbb{T}^1}  -|\varepsilon u_{1t}|^2+ \varepsilon^2 \langle u_{1t}, f_1\rangle  - \varepsilon \langle \varepsilon u_{1t},  (f_1 \cdot (\varepsilon u_1+ R)) u\rangle     dx dt+ O(\varepsilon^3) \\
     &= \int_0^T \int_{\mathbb{T}^1}  -|\varepsilon u_{1t}|^2+ \varepsilon^2 \langle u_{1t}, f_1\rangle       dx dt+ O(\varepsilon^3) \\
      &= \int_0^T \int_{\mathbb{T}^1}  -|\varepsilon \bar w_{t}|^2+ \varepsilon^2 \langle \bar w_{t}, g\rangle       dx dt+ O(\varepsilon^3)\\
      &= -\frac{\pi}{2} N^2 \varepsilon^2 + O(\varepsilon^3).
\end{align*}
This ends the proof of Lemma \ref{lem-en-es}.
\end{proof}

Finally, thanks to Lemma \ref{lem-en-es}, one immediately gets Lemma \ref{lem_decrea}.
\end{proof}

\section{Quantitative local null controllability and rapid stabilization}\label{sec:local-control-stabilization}
This section is devoted to the local null controllability and the local quantitative rapid stabilization, and  is  also related to \hyperref[Step5]{Step 5} of Section \ref{sec:strategy}.
Using the frequency Lyapunov method introduced in \cite{Xiang-NS-2020, Xiang-heat-2020}, we first show that our geometric (nonlinear)  system \eqref{eq:controlhmhf} is locally rapidly stabilizable. Then, relying on our time-periodic  feedback control, we further obtain the local null controllability. This local controllability result, to be combined with the previous proved global approximate controllability, lead to the global null controllability. As commented in Remark \ref{rmk:Liu}, the local null controllability problem of the harmonic map heat flow has been studied in \cite[Proposition 2.6]{Liu-2018}. Here we introduce a different method to prove this null controllability. This new approach shares the advantage of presenting quantitative cost estimates.

\subsection{An iteration approach}
It sounds natural to rely on the iteration argument introduced in \cite{Krieger-Xiang-2022} for the local exact controllability of the wave maps equations. Heuristically speaking, one shall construct a sequence of ``approximate solutions" $\{u^n\}_{n}$ having  initial state  $u^n(t= 0)= u_0$ and final state $u^n(t= T)= \mathfrak{N}+ e^n$, where $\mathfrak{N}:= (0,\ldots ,0, 1)^T$ is the constant map equal to the North pole and $e^n\rightarrow 0$,
\begin{equation*}
   \begin{cases}
        u^{n}_t- \Delta u^{n}= |u^n_x|^2 u^n- \mathbf{1}_{\omega}\langle f^n, u^n\rangle u^n+ \mathbf{1}_{\omega}f^{n}, \\
        u^{n}(0, \cdot)= u_0(\cdot), \; u^{n+1}(T, \cdot)= \mathfrak{N}+ e^{n}
    \end{cases}
\end{equation*}
and find an exact solution by passing to the limit $n\rightarrow +\infty$.   The idea is that for the $n$-th iterate $(u^n, f^n)$ we first consider a correction $(v^n, h^n)$ which satisfies the linear controlled heat equation such that it eliminates the error term of $(u^n, f^n)$, namely $e^n$. Thus the state $(u^n+ v^n, f^n+ h^n)$ almost solves the controlled harmonic map heat equation, moreover the error is smaller than the preceding one. Then we correct the error source term to find an exact solution $(u^{n+1}, f^{n+1})$ such that its final state $e^{n+1}$ becomes smaller than $e^n$, where $u^{n+1}\approx u^n+ v^n$ and $f^{n+1}= f^n+ h^n$.  This strategy was successfully applied on the controlled wave equation.
However, in the current setting, due to strong smoothing effect of the heat equation the heat equation is null controllable but not exactly controllable. Consequently, it becomes more complicated when one aims to construct the correction term $v_n$.  Independently of the approximate solutions illustrated above, it is equally natural to consider iterations of the form:
\begin{equation*}
    \begin{cases}
        u^{n+1}_t- \Delta u^{n+1}= |u^n_x|^2 u^n- \mathbf{1}_{\omega}\langle f^n, u^n\rangle u^n+ \mathbf{1}_{\omega} f^{n+1}, \\
        u^{n+1}(0,  \cdot)= u_0( \cdot),
    \end{cases}
\end{equation*}
or of the form
\begin{equation*}
    \begin{cases}
        u^{n+1}_t- \Delta u^{n+1}- |u^n_x|^2 u^{n+1}= - \mathbf{1}_{\omega}\langle f^n, u^n\rangle u^n+ \mathbf{1}_{\omega}f^{n+1}, \\
        u^{n+1}(0,  \cdot)= u_0( \cdot).
    \end{cases}
\end{equation*}
Nevertheless, in these circumstances, some uniform decay properties of the source term should be involved, making the analysis more delicate.

\subsection{The stereographic projection }
Instead of the iteration method used above, we now use the stereographic projection from the South pole onto the tangent space $T_{\mathfrak{N}} \Sph^k$ of the North pole $\mathfrak{N}= (0,\ldots,0, 1)^T$ to the sphere $\Sph^k$ ($T_{\mathfrak{N}} \Sph^k=\{(x_1,\ldots,x_k,1)^T; (x_1,\ldots,x_k)^T\in \R^k\, \}\simeq \R^k$). It maps the state $u$ which belongs to the geometric target $\Sph^k$ onto this tangent space. Let us remark that the stereographic projection is a standard method for the study of the harmonic map heat flow with sphere targets. Recently, this technique has  been used by Liu to study of the local controllability of harmonic map heat flow  \cite[Proposition 2.6]{Liu-2018}.  Taking advantage of this projection, the controlled harmonic map heat flow equation becomes  the controlled nonlinear heat equation (without any constraint), the reason for which will be rendered later on,
\begin{equation}
    \partial_t v- \Delta v+ \frac{2 s_x}{4+s} v_x- \frac{2|v_x|^2}{4+s} v= \mathbf{1}_{\omega}g \; \textrm{ where } v, g\in \mathbb{R}^k.   \notag
\end{equation}
More precisely, we define the bijection
\begin{align}\label{sphe_project}
    \mathbb{P}: \Sph^k\setminus \{(0,0,\ldots , -1)^T\}&\longrightarrow \mathbb{R}^k, \\
    (u^1,\ldots , u^{k+1})^T&\mapsto (v^1, \ldots , v^k)^T,   \notag
\end{align}
for which, by denoting
$$s:= \sum_{i=1}^k (v^i)^2,$$
one has
\begin{equation*}
    (u^1, \ldots , u^{k+1})^T= \mathbb{P}^{-1}(v^1,\ldots , v^k)^T= \left(\frac{4v^1}{4+ s},\ldots , \frac{4v^k}{4+s}, \frac{4-s}{4+s}\right)^T.
\end{equation*}
Therefore, for any given function $v(t, x)\in \mathbb{R}^k$ the corresponding function $u(t, x)$ on $\Sph^k\subset \mathbb{R}^{k+1}$ satisfies that for  $\forall i\in \{1, \ldots , k\}$,
\begin{align*}
    \partial_t u^i&= 4 \left(
\frac{ v^i_t}{4+s}- \frac{v^i s_t}{(4+s)^2} \right), \\
 \partial_x u^i&= 4 \left(
\frac{ v^i_x}{4+s}- \frac{v^i s_x}{(4+s)^2} \right), \\
 \partial_{xx} u^i&= 4 \left(
\frac{ v^i_{xx}}{4+s}- \frac{2v^i_x s_x+ v^i s_{xx}}{(4+s)^2}+ \frac{2 v^i (s_x)^2}{(4+s)^3} \right), \\
\partial_t u^{k+1}&= -8 \frac{s_t}{(4+s)^2}, \\
\partial_x u^{k+1}&= -8 \frac{s_x}{(4+s)^2}, \\
\partial_{xx} u^{k+1}&= -8 \left(\frac{s_{xx}}{(4+s)^2}- \frac{2(s_x)^2}{(4+s)^3}  \right).
\end{align*}
Notice that
\begin{align*}
    |u_x|^2&= \sum_{i= 1}^k (u^i_x)^2+  (u^{k+1}_x)^2\\
    &= 16 \sum_{i+1}^k \left( \frac{ v^i_x}{4+s}- \frac{v^i s_x}{(4+s)^2}\right)^2+ 16 \frac{4 (s_x)^2}{(4+s)^4} \\
    &= \frac{16 |v_x|^2}{(4+ s)^2}.
\end{align*}
Defining $\bar u:= (u^1, \ldots , u^k)^T$, thanks to the preceding equations, one has
\begin{align*}
    &\;\;\;\; \bar u_t- \Delta \bar u- |u_x|^2 \bar u\\
    &= \frac{4}{4+s} v_t- \frac{4 s_t}{(4+ s)^2} v- \frac{4}{4+s} \Delta v+ \frac{8 s_x}{(4+ s)^2} v_x+ \frac{4s_{xx}}{(4+s)^2} v- \frac{8 (s_x)^2}{(4+s)^3} v- \frac{64 |v_x|^2}{(4+ s)^3} v\\
    &=  \frac{4}{4+s} v_t- \frac{4}{4+s} \Delta v+ \frac{8 s_x}{(4+ s)^2} v_x- \frac{8 |v_x|^2}{(4+ s)^2} v\\
    &\;\;\; + \left(- \frac{4 s_t}{(4+ s)^2} + \frac{4s_{xx}}{(4+s)^2} - \frac{8 (s_x)^2}{(4+s)^3} - \frac{8 (4-s)|v_x|^2}{(4+ s)^3}\right) v,
\end{align*}
and
\begin{align*}
 u_t^{k+1}- \Delta  u^{k+1}- |u_x|^2 u^{k+1}=
       - \frac{8 s_t}{(4+ s)^2} + \frac{8 s_{xx}}{(4+s)^2} - \frac{16 (s_x)^2}{(4+s)^3} - \frac{16 (4-s)|v_x|^2}{(4+ s)^3}.
\end{align*}
Inspired by the preceding formulas, we define
\begin{equation}
    I:=  - \frac{8 s_t}{(4+ s)^2} + \frac{8 s_{xx}}{(4+s)^2} - \frac{16 (s_x)^2}{(4+s)^3} - \frac{16 (4-s)|v_x|^2}{(4+ s)^3},
\end{equation}
and obtain
\begin{align*}
\begin{cases}
\bar u_t- \Delta \bar u- |u_x|^2 \bar u&= \displaystyle \frac{4}{4+s} \left(\partial_t v- \Delta v+ \frac{2 s_x}{4+s} v_x- \frac{2|v_x|^2}{4+s} v\right)+ \frac{1}{2} I v, \\
 u_t^{k+1}- \Delta  u^{k+1}- |u_x|^2 u^{k+1}& =I.
\end{cases}
\end{align*}

Conversely, suppose that for some control function $g$ supported in $[0, T]\times \omega$, the function $v$ satisfies
\begin{equation}\label{eq:transeqv}
    \partial_t v- \Delta v+ \frac{2 s_x}{4+s} v_x- \frac{2|v_x|^2}{4+s} v= \mathbf{1}_{\omega}g= g.
\end{equation}
Remark that for any given initial state $v(0, \cdot)$ and control function $g$, the preceding equation admits a unique solution.
By plugging this equation into the definition of $I$ one obtains
\begin{align*}
    -\frac{1}{8} I&=   \frac{ s_t}{(4+ s)^2} - \frac{ s_{xx}}{(4+s)^2} + \frac{2 (s_x)^2}{(4+s)^3} + \frac{2 (4-s)|v_x|^2}{(4+ s)^3} \\
    &= \frac{1}{(4+s)^2} \left(s_t- s_{xx}+ \frac{2(s_x)^2}{4+s}+ \frac{2(4- s) |v_x|^2}{4+s}   \right) \\
    &= \frac{1}{(4+s)^2} \left(2 v\cdot ( \Delta v- \frac{2 s_x}{4+s} v_x+ \frac{2|v_x|^2}{4+s} v+ g)- 2|v_x|^2- 2 v v_{xx}+ \frac{2(s_x)^2}{4+s}+ \frac{2(4- s) |v_x|^2}{4+s}   \right) \\
    &= \frac{2 vg}{(4+ s)^2}.
\end{align*}
This further implies that
$u= \mathbb{P}^{-1} v$ satisfies
\begin{align*}
\begin{cases}
\bar u_t- \Delta \bar u- |u_x|^2 \bar u&= \frac{4}{4+s} g-  \frac{8 vg}{(4+ s)^2}  v, \\
 u_t^{k+1}- \Delta  u^{k+1}- |u_x|^2 u^{k+1}& = -\frac{16 vg}{(4+ s)^2}.
\end{cases}
\end{align*}
One observes from this preceding equation that in the uncontrolled domain, where the value of $g$ vanishes, one has
\begin{equation*}
    u_t-\Delta u- |u_x|^2 u= 0,
\end{equation*}
and that in the controlled domain $\omega$ one has
\begin{equation*}
    u_t-\Delta u- |u_x|^2 u=
    \begin{pmatrix}
        \frac{4}{4+s} g-  \frac{8 vg}{(4+ s)^2}  v\\ -\frac{16 vg}{(4+ s)^2}
    \end{pmatrix}
\end{equation*}
since
\begin{equation*}
    \begin{pmatrix}
        \frac{4}{4+s} g-  \frac{8 vg}{(4+ s)^2}  v\\ -\frac{16 vg}{(4+ s)^2}
    \end{pmatrix}  \cdot u= \begin{pmatrix}
        \frac{4}{4+s} g-  \frac{8 vg}{(4+ s)^2}  v\\ -\frac{16 vg}{(4+ s)^2}
    \end{pmatrix}  \cdot
     \begin{pmatrix}
       \frac{4}{4+s} \bar u\\ \frac{4- s}{4+s}
    \end{pmatrix}= 0.
\end{equation*}
Namely, it suffices to investigate the control system \eqref{eq:transeqv} on $v$, since this automatically provides an explicit trajectory to the control system on $u$.

\subsection{Quantitative rapid stabilization and null controllability}
As illustrated above, it suffices to study the transformed system \eqref{eq:transeqv}. Actually, we are able to prove the following quantitative result:
\begin{prop}\label{prop:nullcontrol}
    Let $T\in (0, 1]$. The controlled system
   \begin{equation}
    \partial_t v- \Delta v+ \frac{2 v \cdot v_x}{4+ |v^2|} v_x- \frac{2|v_x|^2}{4+ |v|^2} v= \mathbf{1}_{\omega} g \notag
\end{equation}
is locally null controllable in the sense that, there exists some effectively computable $C>1$ which is independent of the value of $T\in (0, 1]$ such that for any initial state $v(0, \cdot)$ satisfying
\begin{equation*}
    \|v(0, \cdot)\|_{H^1(\mathbb{T}^1)}\leq e^{-\frac{C}{T}},
\end{equation*}
we can construct an explicit control $g\in L^{\infty}(0, T; L^2(\mathbb{T}^1))$ satisfying
\begin{equation}
 \|g\|_{L^{\infty}(0, T; L^2(\mathbb{T}^1))}\leq e^{\frac{C}{T}} \|v(0, \cdot)\|_{H^1(\mathbb{T}^1)},  \notag
\end{equation}
 such that the unique solution $v\in C([0, T]; H^1(\mathbb{T}^1))\cap L^2(0, T; H^2(\mathbb{T}^1))$ of the controlled harmonic map heat flow satisfies $v(T, \cdot)= 0$.
\end{prop}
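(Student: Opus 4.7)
The plan is to derive Proposition~\ref{prop:nullcontrol} from the quantitative rapid stabilization result (Theorem~\ref{thm-rapid-stab}, which is stated on the sphere-valued equation but transports to the tangent-plane equation through the stereographic projection $\mathbb{P}$ defined in \eqref{sphe_project}) by an infinite-step iteration that concentrates the stabilization on exponentially shrinking sub-intervals near $t=T$. Theorem~\ref{thm-rapid-stab} provides, for each rate $\lambda>1$, a feedback $F(\lambda,\mathfrak{N})$ such that any initial datum within $H^1$-distance $e^{-C\sqrt{\lambda}}$ of $\mathfrak{N}$ gives a closed-loop trajectory decaying like $e^{C\sqrt{\lambda}}e^{-\lambda t}$ in $\dot H^1$. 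Since for small $v$ the map $v\mapsto \mathbb{P}^{-1}(v)$ is bi-Lipschitz with constants close to one near the origin, this yields a corresponding feedback $G(\lambda)$ for the transformed equation \eqref{eq:transeqv} that stabilizes $0$ at rate $\lambda$ whenever $\|v_0\|_{H^1}\leq c\,e^{-C\sqrt{\lambda}}$.

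First, I would set $T_n:=T(1-2^{-n})$, $\tau_n:=T_{n+1}-T_n=T\,2^{-(n+1)}$, and $\lambda_n:=K\,4^n/T^2$ for a large constant $K$ depending only on the constant $C$ of Theorem~\ref{thm-rapid-stab}. On each slab $[T_n,T_{n+1}]$ I apply the feedback $G(\lambda_n)$ in closed loop; one stage then yields
\begin{equation*}
\|v(T_{n+1})\|_{\dot H^1}\leq e^{C\sqrt{\lambda_n}-\lambda_n\tau_n}\|v(T_n)\|_{\dot H^1}=\exp\!\left(\tfrac{(C\sqrt{K}-K/2)\,2^n}{T}\right)\|v(T_n)\|_{\dot H^1}.
\end{equation*}
Choosing $K$ so that $K/2-C\sqrt{K}\geq 1$ (for instance $K=16C^2+1$), each stage contracts the $\dot H^1$-norm by $e^{-2^n/T}$, and by telescoping
\begin{equation*}
\|v(T_n)\|_{\dot H^1}\leq \exp\!\left(-\tfrac{2^n-1}{T}\right)\|v(0)\|_{H^1}.
\end{equation*}
The local applicability of stage $n$ demands $\|v(T_n)\|_{H^1}\leq c\,e^{-C\sqrt{\lambda_n}}= c\,e^{-C\sqrt{K}\,2^n/T}$; this is enforced precisely when $\|v(0)\|_{H^1}\leq e^{-C_0/T}$ with $C_0:=C\sqrt{K}+1$, which is the quantitative smallness threshold stated in the proposition.

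Second, for the control size I would use that the feedback of Theorem~\ref{thm-rapid-stab} is a linear operator of norm at most $e^{C\sqrt{\lambda_n}}$ from $\dot H^1$ to $L^2$, so during stage $n$ the control $g_n(t):=G(\lambda_n)v(t)$ obeys
\begin{equation*}
\|g_n(t)\|_{L^2}\leq e^{C\sqrt{\lambda_n}}\|v(t)\|_{\dot H^1}\leq e^{C\sqrt{K}\,2^n/T}\cdot e^{-(2^n-1)/T}\|v(0)\|_{H^1}\leq e^{C_0/T}\|v(0)\|_{H^1}
\end{equation*}
uniformly in $t\in[T_n,T_{n+1}]$ (the geometric gap $K/2-C\sqrt{K}\geq 1$ absorbs the $n$-dependence). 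Concatenating the $g_n$ produces $g\in L^\infty(0,T;L^2(\mathbb{T}^1))$ with norm bounded by $e^{C_0/T}\|v(0)\|_{H^1}$. Since $\|v(T_n)\|_{\dot H^1}\to 0$ and the solution is in $C([0,T];H^1)$ by Lemma~\ref{lem:forcehmhf} applied to the sphere-valued representation, $v(T,\cdot)=0$ follows.

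The main obstacle is the competition between the stabilization overhead $e^{C\sqrt{\lambda_n}}$ and the shrinking time $\tau_n$: the useful exponent $C\sqrt{\lambda_n}-\lambda_n\tau_n$ is favorable only if $\lambda_n\tau_n\gg C^2$, which forces $\lambda_n\gtrsim 4^n/T^2$ and, in turn, forces the initial smallness threshold $e^{-C/T}$ (any slower growth of $\lambda_n$ would not allow both decay and local applicability at every stage). A secondary technical point is that Theorem~\ref{thm-rapid-stab} is stated on the sphere, while \eqref{eq:transeqv} is a genuinely nonlinear perturbation of the linear heat equation; this is handled by the fact that all trajectories remain in a uniformly small $H^1$-neighborhood of the origin throughout the iteration, so the quadratic terms $\tfrac{2s_x}{4+s}v_x-\tfrac{2|v_x|^2}{4+s}v$ are controlled by the quantitative continuous dependence of Lemma~\ref{lem:cont-dep}, and the equivalence through $\mathbb{P}$ preserves the $H^1$-scales up to harmless constants.
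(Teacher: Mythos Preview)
Your strategy is the same as the paper's: partition $[0,T]$ into geometrically shrinking sub-intervals, apply the rapid-stabilization feedback with an exponentially increasing rate $\lambda_n$ on each, and telescope. The paper carries this out directly with Lemma~\ref{lem-rapid-stab}, which is already stated for the $v$-equation in the full $H^1$-norm; your detour through Theorem~\ref{thm-rapid-stab} on the sphere and then back through $\mathbb{P}$ is unnecessary and is the source of the awkward $\dot H^1$ versus $H^1$ bookkeeping.

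There is, however, a genuine arithmetic slip that makes two of your displayed inequalities fail. After choosing $K$ with $K/2-C\sqrt{K}\geq 1$ you only retain the contraction $e^{-2^n/T}$ per stage and hence the telescoped bound $\|v(T_n)\|\leq e^{-(2^n-1)/T}\|v(0)\|$. But your applicability check requires $\|v(T_n)\|\leq c\,e^{-C\sqrt{K}\,2^n/T}$ and your control bound requires $e^{C\sqrt{K}\,2^n/T}e^{-(2^n-1)/T}\leq e^{C_0/T}$; since $C\sqrt{K}>1$, both inequalities blow up as $n\to\infty$ and are false as written. The fix is to keep the \emph{full} per-stage contraction $e^{-(K/2-C\sqrt{K})2^n/T}$ in the telescoping, which gives $\|v(T_n)\|\leq e^{-(K/2-C\sqrt{K})(2^n-1)/T}\|v(0)\|$, and then to impose the stronger condition $K/2-C\sqrt{K}\geq C\sqrt{K}+1$ (equivalently $K>16C^2$, which your choice $K=16C^2+1$ barely meets). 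With that margin the decay outpaces both the applicability threshold $e^{-C\sqrt{K}\,2^n/T}$ and the feedback gain $e^{C\sqrt{K}\,2^n/T}$ uniformly in $n$, and your argument goes through.
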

We use the Frequency Lyapunov method introduced in \cite{Xiang-NS-2020, Xiang-heat-2020} to prove this local null controllability result.  This stabilization-based approach provides  constructive and explicit controls to achieve both quantitative rapid stabilization and finite time stabilization (thus consequently null controllability).

For any given constant $\lambda>0$, we define  the low-frequency projection
\begin{align}
\label{defPlambda}
    P_{\lambda} v&:= \frac{1}{\pi}\left(\sum_{1\leq n_2\leq \sqrt{\lambda}} \langle v, \cos(n_2x)\rangle_{L^2(\mathbb{T}^1)} \cos (n_2 x) \right)+ \frac{1}{2\pi} \langle v, 1\rangle_{L^2(\mathbb{T}^1)}   \notag \\
     & \;\;\;\;\;\;\; \;\;\; \;\;\;\;\;\;\;\;\; \;\;\; \;\;\;\;\;\;\;\;\; + \frac{1}{\pi} \left(\sum_{1\leq n_1\leq \sqrt{\lambda}} \langle v, \sin(n_1x)\rangle_{L^2(\mathbb{T}^1)} \sin (n_1x)\right).
\end{align}
Recall that $\{\sin (nx)/\sqrt{\pi}\}_{n\in \mathbb{N}\setminus\{0\}}\cup \{\cos (nx)\sqrt{\pi}\}_{n\in \mathbb{N}\setminus\{0\}}\cup\{1/\sqrt{2\pi}\}$ is an orthonormal basis made of eigenfunctions of the Laplace operator on the torus $\mathbb{T}^1$.
The essential step of the frequency Lyapunov  method is the following quantitative rapid stabilization result:
\begin{lem}\label{lem-rapid-stab}
    There exist effectively computable constants $C_0, C_1>1$   such that for any $\lambda>1$, the unique solution of  the closed-loop system
    \begin{equation}
    \partial_t v- \Delta v+ \frac{2 v \cdot v_x}{4+ |v^2|} v_x- \frac{2|v_x|^2}{4+ |v|^2} v= -  \lambda e^{C_0 \sqrt{\lambda}} \mathbf{1}_{\omega} P_{\lambda} v,
\end{equation}
decays exponentially as
\begin{equation}
   \|v(t, \cdot)\|_{H^1}\leq e^{2C_0 \sqrt{\lambda}} e^{-\frac{\lambda}{4} t}  \|v(0, \cdot)\|_{H^1} \; \forall t\in (0, +\infty),  \notag
\end{equation}
provided that the initial state $v(0, \cdot)$ satisfies
\begin{equation}
    \|v(0, \cdot)\|_{H^1}\leq C_1^{-1} e^{-6 C_0 \sqrt{\lambda}}.   \notag
\end{equation}
\end{lem}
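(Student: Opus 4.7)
The proof follows the frequency Lyapunov strategy of the second author (\cite{Xiang-heat-2020,Xiang-NS-2020}), adapted to the semilinear equation produced by the stereographic projection. The plan is to first establish rapid decay for the linear closed-loop operator through a weighted frequency-split Lyapunov functional, then close the estimate for the full nonlinear equation via a smallness/bootstrap argument using the cubic structure of the extra terms.

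\textbf{Linear step.} I would begin with the linearization
\begin{equation*}
\partial_t w - \Delta w = -\lambda e^{C_0\sqrt{\lambda}} \mathbf{1}_{\omega} P_{\lambda} w,
\end{equation*}
and split $w = w_L + w_H$, where $w_L := P_{\lambda} w$ and $w_H := (I-P_{\lambda})w$. The high-frequency part satisfies the spectral gap $\|w_{Hx}\|_{L^2}^2 \geq \lambda \|w_H\|_{L^2}^2$, so natural dissipation already produces decay at rate $\lambda$, with the only coupling to $w_L$ entering through $-\lambda e^{C_0\sqrt\lambda}(I-P_\lambda)(\mathbf{1}_\omega w_L)$, an $L^2$-bounded forcing. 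The low-frequency part is handled by the Lebeau--Robbiano spectral inequality: there exists $C_{LR}$ such that $\|g\|_{L^2}^2 \leq e^{C_{LR}\sqrt{\lambda}} \|\mathbf{1}_\omega g\|_{L^2}^2$ for every $g=P_\lambda g$. Choosing $C_0>C_{LR}$, the feedback term dominates and yields strong $L^2$-dissipation for $w_L$. I would then combine these by introducing
\begin{equation*}
\Phi(w) := \|w_H\|_{L^2}^2 + \mu \|w_L\|_{L^2}^2, \qquad \mu \sim e^{(C_0+C_{LR})\sqrt{\lambda}},
\end{equation*}
where $\mu$ is tuned so that Young's inequality absorbs the cross-coupling $\lambda e^{C_0\sqrt\lambda}\|w_H\|_{L^2}\|w_L\|_{L^2}$. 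A direct calculation produces $\frac{d}{dt}\Phi \leq -\tfrac{\lambda}{2}\Phi$, hence $L^2$ exponential decay at rate $\lambda/2$ with prefactor $e^{C\sqrt\lambda}$. Promotion to $H^1$ is done by either running the same argument on $\partial_x w$ (using that $P_\lambda$ commutes with $\partial_x$), or by invoking parabolic smoothing $\|w(t)\|_{H^1}\lesssim t^{-1/2}\|w(0)\|_{L^2}$ on a short initial layer, which accounts for the slightly degraded rate $\lambda/4$ appearing in the statement.

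\textbf{Nonlinear step.} The nonlinear remainder
\begin{equation*}
N(v) := \frac{2(v\cdot v_x)}{4+|v|^2}v_x - \frac{2|v_x|^2}{4+|v|^2}v
\end{equation*}
is at most cubic in $v$ for bounded states: using Sobolev embeddings on $\mathbb{T}^1$ and a Gagliardo--Nirenberg interpolation, one gets $\|N(v)\|_{L^2}\lesssim \|v\|_{H^1}^{5/2}\|v_{xx}\|_{L^2}^{1/2}$ and a similar $H^{-1}$-type bound when testing against $v$. Running the Lyapunov functional $\Phi$ on the full nonlinear system then gives
\begin{equation*}
\frac{d}{dt}\Phi(v) \leq -\tfrac{\lambda}{2}\Phi(v) + C e^{(C_0+C_{LR})\sqrt{\lambda}} \,\|v\|_{H^1}\,\|N(v)\|_{L^2},
\end{equation*}
so the nonlinear correction is dominated by the linear decay provided $\|v\|_{H^1}$ remains smaller than $c\, e^{-D\sqrt{\lambda}}$ for appropriate constants $c,D$.

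\textbf{Bootstrap and conclusion.} I would close the argument by a continuity argument: let $T^\star$ be the supremum of times $\tau>0$ on which the claimed bound $\|v(t)\|_{H^1} \leq e^{2C_0\sqrt{\lambda}} e^{-\lambda t/4} \|v(0)\|_{H^1}$ holds throughout $[0,\tau]$. On such an interval the smallness hypothesis $\|v(0)\|_{H^1} \leq C_1^{-1} e^{-6C_0\sqrt{\lambda}}$ forces $\|v(t)\|_{H^1} \leq C_1^{-1}e^{-4C_0\sqrt\lambda}$, so the cubic nonlinear perturbation in the differential inequality for $\Phi$ becomes a negligible fraction of $\lambda \Phi(v)$ once $C_1$ is taken sufficiently large. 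Integrating the resulting Gr\"onwall inequality strictly improves the target bound, forcing $T^\star=+\infty$. The main obstacle is the bookkeeping of the $e^{C\sqrt\lambda}$-weights: one must verify simultaneously that $\Phi$ is equivalent to $\|v\|_{L^2}^2$ up to a factor $\mu\sim e^{(C_0+C_{LR})\sqrt\lambda}$, that the loss from $L^2$-to-$H^1$ smoothing costs only another $e^{C\sqrt\lambda}$, and that the cubic nonlinearity is indeed absorbed — which is exactly where the choice $C_0$ (strictly larger than $C_{LR}$) and the size exponent $6C_0$ in the smallness hypothesis are fixed.
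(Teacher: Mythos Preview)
Your overall architecture---frequency splitting, Lebeau--Robbiano spectral inequality for the low modes, natural dissipation for the high modes, cubic smallness of the nonlinearity, bootstrap closure---matches the paper's exactly. The gap is in your choice of Lyapunov functional. You take $\Phi(w)=\|w_H\|_{L^2}^2+\mu\|w_L\|_{L^2}^2$, an $L^2$-equivalent quantity, and then need a separate ``promotion to $H^1$'' step. Neither of your two promotion routes works cleanly: differentiating the equation in $x$ is obstructed because the feedback is $\mathbf{1}_\omega P_\lambda v$ and $\partial_x \mathbf{1}_\omega$ is a sum of Dirac masses; parabolic smoothing on an initial layer handles the linear part but does not help you close the \emph{nonlinear} Gr\"onwall. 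More seriously, when you compute $\frac{d}{dt}\Phi$ on the full equation, the Laplacian only contributes $-\|v_x\|_{L^2}^2$, whereas your own nonlinear bound $\|N(v)\|_{L^2}\lesssim \|v\|_{H^1}^{5/2}\|v_{xx}\|_{L^2}^{1/2}$ produces a factor $\|v_{xx}\|_{L^2}^{1/2}$ that has nowhere to go: there is no $-\|v_{xx}\|_{L^2}^2$ term in your differential inequality to absorb it via Young.

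The paper avoids both issues in one stroke by taking the \emph{mixed} functional
\[
V_\lambda(v)=\mu_\lambda\|P_\lambda v\|_{L^2}^2+\|\partial_x P_\lambda^\perp v\|_{L^2}^2,\qquad \mu_\lambda=\lambda e^{2C_0\sqrt\lambda},
\]
i.e.\ $L^2$ on the low block but $\dot H^1$ on the high block. This is already equivalent to $\|v\|_{H^1}^2$ (so no promotion step is needed), and crucially its time derivative produces $-\|\Delta P_\lambda^\perp v\|_{L^2}^2$, which together with $-\mu_\lambda\|\partial_x P_\lambda v\|_{L^2}^2$ gives a full $-\tfrac12\|\Delta v\|_{L^2}^2$. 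That term absorbs the $\|\Delta v\|_{L^2}^{1/2}$ (and $\|\Delta v\|_{L^2}^{3/2}$) factors from the Gagliardo--Nirenberg estimate on $N(v)$ via Young, yielding directly
\[
\frac{d}{dt}V_\lambda\leq -\lambda V_\lambda + C\big(e^{4C_0\sqrt\lambda}V_\lambda^{5/3}+V_\lambda^3\big),
\]
from which the bootstrap closes exactly as you describe. So the fix to your argument is simply to replace $\|w_H\|_{L^2}^2$ by $\|\partial_x w_H\|_{L^2}^2$ in $\Phi$; everything else you wrote then goes through.
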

\begin{remark}\label{rem:rapid-lem-thm}
This lemma is devoted to the rapid stabilization of $v= \mathbb{P} u$ (recall the definition of $\mathbb{P}$ in equation \eqref{sphe_project}, and the relation between the equation on $u$ and the equation on $v$). After applying the inverse of the stereographic projection, the function $u$ satisfies the following closed-loop system
\begin{equation*}
    u_t-\Delta u- |u_x|^2 u= \mathbf{1}_{\omega}  F(u):=
     \mathbf{1}_{\omega}
    \begin{pmatrix}
        \frac{4}{4+s} g-  \frac{8 vg}{(4+ s)^2}  v\\ -\frac{16 vg}{(4+ s)^2}
    \end{pmatrix}
\end{equation*}
where
\begin{equation*}
    v= \mathbb{P}u, \; s= |\mathbb{P} u|^2,\;  g= -  \lambda e^{C_0 \sqrt{\lambda}} \mathbf{1}_{\omega} P_{\lambda} (\mathbb{P} u).
\end{equation*}
Since $v$ decays rapidly so does $u$. This automatically gives the proof of Theorem \ref{thm-rapid-stab} on the local rapid stabilization of the harmonic map heat flow around $\mathfrak{N}= (0,\ldots,0,1)^T$. The stabilization around other points is a direct consequence of the rotation invariance of the forced harmonic map heat flow.
\end{remark}
We first provide the proof of Proposition \ref{prop:nullcontrol} with the help of Lemma \ref{lem-rapid-stab}, while this auxiliary lemma will be shown later.
\begin{proof}[Proof of Proposition \ref{prop:nullcontrol}]
Armed with the preceding lemma, the construction of  controls leading to null controllability of the system is an explicit iteration procedure.  We refer to Section 4 of the article \cite[Theorem 4.1]{Xiang-NS-2020} by the last author  concerning this precise construction. Roughly speaking, one shall cut the time interval $(0, T)$ by infinitely many pieces:
   \begin{equation}
    (0, T)= \cup_{k= 1}^{\infty} (t_k, t_{k+1}],  \notag
    \end{equation}
and on each piece construct a feedback law as $ \mathbf{1}_{\omega} g:= -  \lambda_k e^{C_0 \sqrt{\lambda_k}} \mathbf{1}_{\omega} P_{\lambda_k} v$  such that the solution decays exponentially fast with rate $\lambda_k/4$ in the period $t\in (t_k, t_{k+1})$.  After a careful selection of the values of $\{t_k, \lambda_k\}_{k\in \mathbb{N}^*}$ as well as a constant $C>1$, we  finally show that for  any  initial state $v(0, \cdot)$ satisfying
\begin{equation}
\|v(0, \cdot)\|_{H^1(\mathbb{T}^1)}\leq e^{- \frac{C}{T}} \notag
\end{equation}
the unique solution $v(t)|_{t\in (0, T)}$ of the equation
 \begin{equation*}
\partial_t v- \Delta v+ \frac{2 v \cdot v_x}{4+ |v^2|} v_x- \frac{2|v_x|^2}{4+ |v|^2} v= -  \lambda_k e^{C_0 \sqrt{\lambda_k}} \mathbf{1}_{\omega} P_{\lambda_k} v \; \forall t\in (t_k, t_{k+1}], \forall k\in \mathbb{N}^*,
\end{equation*}
satisfies:
\begin{gather*}
\|v(t_k, \cdot)\|_{H^1(\mathbb{T}^1)}\leq C_1^{-1} e^{-6 C_0 \sqrt{\lambda_k}} \; \forall k\in \mathbb{N}^*, \\
 \|v(t, \cdot)\|_{H^1(\mathbb{T}^1)}\leq e^{2C_0 \sqrt{\lambda_k}} e^{-\frac{\lambda_k}{4} t}  \|v(t_k, \cdot)\|_{H^1(\mathbb{T}^1)} \; \forall t \in (t_k, t_{k+1}], \forall k\in \mathbb{N}^*,\\
\|\mathbf{1}_{\omega} g(t)\|_{L^2(\mathbb{T}^1)}= \| \lambda_k e^{C_0 \sqrt{\lambda_k}} \mathbf{1}_{\omega} P_{\lambda_k} v\|_{L^2(\mathbb{T}^1)}\leq e^{\frac{C}{T}}\; \forall t\in (t_k, t_{k+1}], \forall k\in \mathbb{N}^*,\\
v(t, \cdot)\xrightarrow{t\rightarrow T^{-}} 0 \textrm{ in } H^1(\mathbb{T}^1).
\end{gather*}
This finishes the proof of the quantitative null controllability property.
\end{proof}

\begin{proof}[Proof of Lemma \ref{lem-rapid-stab}]
    According to the frequency Lyapunov method,  one  directly considers
the frequency-based Lyapunov function
\begin{equation}
    V_{\lambda}(v):= \mu_{\lambda} \|P_{\lambda} v\|_{L^2(\mathbb{T}^1)}^2+ \|\partial_x P_{\lambda}^{\perp} v\|_{L^2(\mathbb{T}^1)}^2,
\end{equation}
as well as the following choice of  feedback law
\begin{equation}
    g(v):= - \gamma_{\lambda} P_{\lambda} v,
\end{equation}
with the exact value of $\gamma_{\lambda}, \mu_{\lambda}\in [\lambda, +\infty)$  to be fixed later on.
 Remark that the Lyapunov function $V_{\lambda}$ is equivalent to the $H^1$-norm, since
\begin{equation}
    \frac{1}{2} \|v\|_{H^1(\mathbb{T}^1)}^2\leq V_{\lambda}(v)\leq \mu_{\lambda} \|v\|_{H^1(\mathbb{T}^1)}^2.
\end{equation}

In the following context, if we do not emphasize $L^2(\omega)$, then the  $L^2$ notation infers to $L^2(\mathbb{T}^1)$.  Let us consider the variation of $V_{\lambda}(v)$ with respect to time:
\begin{align*}
    \frac{d}{dt} V_{\lambda}(v)&= 2 \mu_{\lambda} \big\langle  P_{\lambda} v, \frac{d}{dt}  P_{\lambda} v\big\rangle_{L^2}+ 2  \big\langle \partial_x P_{\lambda}^{\perp} v, \frac{d}{dt} \partial_x P_{\lambda}^{\perp} v\big\rangle_{L^2} \\
    &= 2 \mu_{\lambda} \big\langle  P_{\lambda} v, \frac{d}{dt}
 v\big\rangle_{L^2} - 2  \big\langle \Delta P_{\lambda}^{\perp} v, \frac{d}{dt} v\big\rangle_{L^2}.
\end{align*}

By denoting
\begin{equation}
    a(v, v_x):= - \frac{2 v \cdot v_x}{4+ |v^2|} v_x+ \frac{2|v_x|^2}{4+ |v|^2} v,    \notag
\end{equation}
there is
\begin{equation}
    \|a(v, v_x)\|_{L^2}\lesssim \|v_x\|_{L^4}^2\lesssim \|v_x\|_{L^2}^{\frac{3}{2}}\|\Delta v\|_{L^2}^{\frac{1}{2}}.    \notag
\end{equation}
We also recall the so-called spectral inequality (see \cite{Lebeau-Robbiano-CPDE} for a general result on this inequality):
\begin{lem}[Spectral inequality, \cite{Lebeau-Robbiano-CPDE}]
There exists some constant $C_0$ independent of $\lambda>1$ such that
\begin{equation}
    \|P_{\lambda} f\|_{L^2(\omega)}\geq e^{- C_0\sqrt{\lambda}}  \|P_{\lambda} f\|_{L^2(\mathbb{T}^1)} \;  \forall f\in L^2(\mathbb{T}^1),\;  \forall \lambda>1.
\end{equation}
\end{lem}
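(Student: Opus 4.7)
The plan is to recognise this bound as a quantitative unique continuation statement for trigonometric polynomials, and reduce it to a classical Remez/Tur\'an-type inequality on the unit circle. By the definition \eqref{defPlambda} of $P_\lambda$, the function $g:=P_\lambda f$ lies in the span of $\{1,\cos(nx),\sin(nx)\}_{1\leq n\leq\sqrt{\lambda}}$; in particular $g$ is a trigonometric polynomial of degree at most $N:=\lfloor\sqrt{\lambda}\rfloor$. The inequality to be established is thus the statement that for every such $g$,
\begin{equation*}
\|g\|_{L^2(\mathbb{T}^1)}\leq e^{C_0 N}\|g\|_{L^2(\omega)},
\end{equation*}
with $C_0$ depending only on $|\omega|$; since $N\leq\sqrt{\lambda}$ this is precisely the claim.

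The first step is a sup-norm comparison. Setting $z=e^{ix}$, the Laurent polynomial $\sum_{|n|\leq N}c_n z^n$ can be written as $z^{-N}Q(z)$ with $Q$ an algebraic polynomial of degree $\leq 2N$, and $|g(x)|=|Q(e^{ix})|$ since $|z|=1$. The classical Remez inequality on the unit circle (see e.g.\ Borwein--Erd\'elyi's monograph on polynomial inequalities) then yields
\begin{equation*}
\|Q\|_{L^\infty(\mathbb{T}^1)}\leq\bigl(A/|\omega|\bigr)^{2N}\|Q\|_{L^\infty(\omega)}
\end{equation*}
for an absolute constant $A$, i.e.\ $\|g\|_{L^\infty(\mathbb{T}^1)}\leq e^{C_1 N}\|g\|_{L^\infty(\omega)}$.

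The second step converts this sup-norm inequality into the required $L^2$--$L^2$ comparison by combining Bernstein's inequality with elementary integration. On one side, $\|g\|_{L^2(\mathbb{T}^1)}\leq\sqrt{2\pi}\|g\|_{L^\infty(\mathbb{T}^1)}$; on the other, Bernstein gives $\|g'\|_{L^\infty(\mathbb{T}^1)}\leq N\|g\|_{L^\infty(\mathbb{T}^1)}\leq N e^{C_1 N}\|g\|_{L^\infty(\omega)}$. Setting $M:=\|g\|_{L^\infty(\omega)}$ and choosing a point $x_\ast\in\omega$ at which $|g(x_\ast)|$ approaches $M$, on a subinterval $I\subset\omega$ of length $\delta\simeq N^{-1}e^{-C_1 N}$ centered at $x_\ast$ the Lipschitz bound forces $|g|\geq M/2$. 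Hence $\|g\|_{L^2(\omega)}^2\geq (\delta/4)M^2$, and chaining the two steps together yields
\begin{equation*}
\|g\|_{L^2(\mathbb{T}^1)}\leq\sqrt{2\pi}\,e^{C_1 N} M\leq 4\sqrt{\pi/\delta}\,e^{C_1 N}\|g\|_{L^2(\omega)}\leq e^{C_0 N}\|g\|_{L^2(\omega)},
\end{equation*}
where polynomial factors in $N$ have been absorbed into the exponential; the small-$N$ cases are trivial.

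The main obstacle, i.e.\ the nonelementary ingredient, is the Remez-type bound of the first step with an exponent linear in the polynomial degree: this is precisely the classical fact responsible for the sharp $\sqrt{\lambda}$ rate in the Lebeau--Robbiano heuristic. An alternative route, following the original Lebeau--Robbiano strategy and well-suited to higher-dimensional generalisations, would proceed via a global Carleman estimate for the elliptic lift $\partial_s^2+\partial_x^2$ on the cylinder $(0,1)\times\mathbb{T}^1$ applied to the harmonic extension of $g$ (given explicitly by $u(s,x)=\sum_{n\neq 0}c_n\sinh(ns)e^{inx}/n+c_0 s$); however, in the one-dimensional periodic setting the complex-analytic argument sketched above is essentially sharp and considerably more transparent.
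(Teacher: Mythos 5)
The paper does not prove this lemma---it quotes it from Lebeau--Robbiano, whose argument runs through a Carleman estimate for the elliptic lift $\partial_s^2+\Delta$ on a cylinder (the route you sketch in your final paragraph) and works on any compact manifold. Your proof is correct and takes a genuinely different, purely one-dimensional route: identify $P_\lambda f$ with a trigonometric polynomial of degree $N=\lfloor\sqrt{\lambda}\rfloor$, invoke a Remez/Tur\'an--Nazarov bound to get an $L^\infty$ doubling inequality with exponent linear in $N$, then convert to $L^2$ via Bernstein together with a short-interval Lipschitz lower bound. This is more elementary and yields explicit constants, but it leans on the fact that the Laplace eigenfunctions on $\mathbb{T}^1$ are exponentials, so unlike the Carleman method it does not extend to general manifolds. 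Two small points are worth tightening: (a) in the regime where $|\omega|$ is small, the inequality you need is really the Tur\'an--Nazarov lemma, $\sup_{\mathbb{T}^1}|g|\leq (A/|\omega|)^{2N}\sup_\omega|g|$, rather than the Erd\H{o}s--Remez form in Borwein--Erd\'elyi, which is stated for $|\mathbb{T}^1\setminus\omega|$ small and does not directly cover an arbitrarily small control region; (b) the interval of length $\delta\simeq N^{-1}e^{-C_1 N}$ centered at the near-maximizer $x_\ast$ can spill out of $\omega$ when $x_\ast$ sits near $\partial\omega$, so one should either keep only the half of the interval that lies inside $\omega$, or apply the Remez step to a slightly smaller interval strictly inside $\omega$ and then measure the $L^2$ mass on $\omega$. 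Neither point changes the conclusion or the $e^{C_0\sqrt{\lambda}}$ rate.
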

Lastly, we recall the Young's inequalities
\begin{equation*}
    a^{\frac{3}{2}} b^{\frac{1}{2}}\leq \frac{3 a^2+ b^2}{4},
 \; a^{\frac{3}{2}} b^{\frac{3}{2}}\leq \frac{3 a^2+ b^6}{4}
 \; \textrm{ and } \; a^{\frac{1}{2}} b^{\frac{5}{2}}\leq \frac{ a^2+ 3b^{\frac{10}{3}}}{4}.
\end{equation*}

On the one hand,
\begin{align*}
    - 2  \big\langle \Delta P_{\lambda}^{\perp} v, \frac{d}{dt} v\big\rangle_{L^2}&= - 2  \big\langle \Delta P_{\lambda}^{\perp} v, \Delta v+ a(v, v_x)- \mathbf{1}_{\omega} \gamma_{\lambda} P_{\lambda} v\big\rangle_{L^2}\\
    &\leq -2 \|\Delta P_{\lambda}^{\perp} v\|_{L^2}^2+ 2\|\Delta P_{\lambda}^{\perp} v\|_{L^2} \|a(v, v_x)\|_{L^2}+ \gamma_{\lambda} \|\Delta P_{\lambda}^{\perp} v\|_{L^2}\| P_{\lambda} v\|_{L^2} \\
    &\leq -2 \|\Delta P_{\lambda}^{\perp} v\|_{L^2}^2+ C\|\Delta v\|_{L^2}^{\frac{3}{2}} \|v_x\|_{L^2}^{\frac{3}{2}}+ \gamma_{\lambda} \|\Delta P_{\lambda}^{\perp} v\|_{L^2}\| P_{\lambda} v\|_{L^2} \\
    &\leq -\frac{3}{2} \|\Delta P_{\lambda}^{\perp} v\|_{L^2}^2+  \gamma_{\lambda}^2 \| P_{\lambda} v\|_{L^2}^2+ C\|\Delta v\|_{L^2}^{\frac{3}{2}} \|v_x\|_{L^2}^{\frac{3}{2}}.
\end{align*}

On the other hand, thanks to the spectral inequality and the standard energy estimates,
\begin{align*}
 2  \big\langle  P_{\lambda} v, \frac{d}{dt} v\big\rangle_{L^2}
 &=  2  \big\langle  P_{\lambda} v, \Delta v+ a(v, v_x)- \mathbf{1}_{\omega} \gamma_{\lambda} P_{\lambda} v\big\rangle_{L^2} \\
 &= -2 \|\partial_x P_{\lambda} v\|_{L^2}^2- 2\gamma_{\lambda} \big\langle P_{\lambda} v, P_{\lambda} v\big\rangle_{L^2(\omega)}+ 2\big\langle  P_{\lambda} v, a(v, v_x)\big\rangle_{L^2} \\
 &\leq - 2\gamma_{\lambda} e^{- C_0\sqrt{\lambda}}  \|P_{\lambda} v\|_{L^2}^2- 2 \|\partial_x P_{\lambda} v\|_{L^2}^2+ C\|P_{\lambda} v\|_{L^2} \|v_x\|_{L^2}^{\frac{3}{2}}\|\Delta v\|_{L^2}^{\frac{1}{2}} \\
  &\leq - 2\gamma_{\lambda} e^{- C_0\sqrt{\lambda}}  \|P_{\lambda} v\|_{L^2}^2- 2 \|\partial_x P_{\lambda} v\|_{L^2}^2+ C\| v\|_{H^1}^{\frac{5}{2}} \|\Delta v\|_{L^2}^{\frac{1}{2}}.
\end{align*}

Recall that by the definition of  the projection $P_{\lambda}$
\begin{equation}
    -\|\Delta P_{\lambda}^{\perp} v\|_{L^2}^2\leq -  \lambda \|\partial_x P_{\lambda}^{\perp} v\|_{L^2}^2.   \notag
\end{equation}
By combing the preceding estimates together one obtains
\begin{align*}
 \frac{d}{dt} V_{\lambda}(v)&\leq     - 2 \mu_{\lambda} \gamma_{\lambda} e^{- C_0\sqrt{\lambda}}  \|P_{\lambda} v\|_{L^2}^2- 2 \mu_{\lambda}\|\partial_x P_{\lambda} v\|_{L^2}^2+ C\mu_{\lambda}\| v\|_{H^1}^{\frac{5}{2}} \|\Delta v\|_{L^2}^{\frac{1}{2}} \\
 &\;\;\;\; \;\;\; -\frac{3}{2} \|\Delta P_{\lambda}^{\perp} v\|_{L^2}^2+  \gamma_{\lambda}^2 \| P_{\lambda} v\|_{L^2}^2+ C\|\Delta v\|_{L^2}^{\frac{3}{2}} \|v_x\|_{L^2}^{\frac{3}{2}} \\
 &= -\|\Delta P_{\lambda}^{\perp} v\|_{L^2}^2 - 2 \mu_{\lambda} \gamma_{\lambda} e^{- C_0\sqrt{\lambda}}  \|P_{\lambda} v\|_{L^2}^2   + \gamma_{\lambda}^2 \| P_{\lambda} v\|_{L^2}^2
\\
&  -\frac{1}{2} \|\Delta P_{\lambda}^{\perp} v\|_{L^2}^2- 2 \mu_{\lambda}\|\partial_x P_{\lambda} v\|_{L^2}^2+ C\mu_{\lambda}\| v\|_{H^1}^{\frac{5}{2}} \|\Delta v\|_{L^2}^{\frac{1}{2}} + C\|\Delta v\|_{L^2}^{\frac{3}{2}} \|v_x\|_{L^2}^{\frac{3}{2}} \\
&\leq -\|\Delta P_{\lambda}^{\perp} v\|_{L^2}^2 - 2 \mu_{\lambda} \gamma_{\lambda} e^{- C_0\sqrt{\lambda}}  \|P_{\lambda} v\|_{L^2}^2   + \gamma_{\lambda}^2 \| P_{\lambda} v\|_{L^2}^2
\\
&  \;\;\;\;\;\;\;\; -\frac{1}{2} \|\Delta  v\|_{L^2}^2+ C\mu_{\lambda}\| v\|_{H^1}^{\frac{5}{2}} \|\Delta v\|_{L^2}^{\frac{1}{2}} + C\|\Delta v\|_{L^2}^{\frac{3}{2}} \|v_x\|_{L^2}^{\frac{3}{2}} \\
&\leq -\lambda \|\partial_x P_{\lambda}^{\perp} v\|_{L^2}^2 - 2 \mu_{\lambda} \gamma_{\lambda} e^{- C_0\sqrt{\lambda}}  \|P_{\lambda} v\|_{L^2}^2   + \gamma_{\lambda}^2 \| P_{\lambda} v\|_{L^2}^2
\\
& \;\;\;\;\;\;\;\; + C\mu_{\lambda}^{\frac{4}{3}}\| v\|_{H^1}^{\frac{10}{3}}+ C \|v\|_{H^1}^{6},
\end{align*}
where both $C$ and $C_0$ are independent of the choice of $\lambda\in (1, +\infty)$.
By selecting
\begin{equation}
    \gamma_{\lambda}= \lambda e^{C_0 \sqrt{\lambda}} \textrm{ and } \mu_{\lambda}= \lambda e^{2 C_0 \sqrt{\lambda}},
\end{equation}
we have
\begin{equation}
    \frac{d}{dt} V_{\lambda}(v)\leq- \lambda V_{\lambda} + C \left(e^{4C_0 \sqrt{\lambda}} V_{\lambda}^{\frac{5}{3}}+ V_{\lambda}^3 \right).
\end{equation}
Using this {\it a priori} energy estimate one can easily conclude the local (in time) existence and uniqueness of the solution for the closed-loop system.  Moreover, for any initial state that satisfies
\begin{equation}
    \|v\|_{H^1}\leq C_1^{-1} e^{-6 C_0 \sqrt{\lambda}},
\end{equation}
this unique solution  decays exponentially:
\begin{equation*}
    V_{\lambda}(v(t))\leq e^{- \frac{\lambda}{2} t} V_{\lambda}(v(0)),
\end{equation*}
and
\begin{equation*}
    \|v(t)\|_{H^1}\leq e^{2C_0 \sqrt{\lambda}} e^{-\frac{\lambda}{4} t}  \|v(0)\|_{H^1} \; \forall t\in (0, +\infty).
\end{equation*}

Thus we finish the proof of Lemma \ref{lem-rapid-stab}.
\end{proof}

\section{Small-time global exact controllability between harmonic maps}\label{sec:GECHM}

\subsection{The sphere target case}
This part is devoted to the proofs of Theorem \ref{lem:excconhm0} and Theorem  \ref{lem:excconhm}.

We first investigate the simplest case, namely, the harmonic map heat flow $\mathbb{T}^1\rightarrow \Sph^1$, for which, benefiting from the polar coordinates, the system is transformed into the linear heat equation with an interior control.
Next, we show that for a sphere $\Sph^k$, if the initial state is in a given closed geodesic and the control force is tangent to this geodesic, then the unique solution to the harmonic map heat flow remains in the same closed geodesic.
Moreover, under the polar coordinates, the system becomes again the controlled heat equation, for which the  controllability is well-investigated.

Since $\gamma$ is a closed constant-speed geodesic on $\mathbb{S}^k$ with energy $2\pi$, there exists an orthogonal matrix $A\in O(k+1)$ such that
\begin{equation*}
    A \gamma (x)= \varphi (x) \;
 \textrm{ where } \varphi(x):= (\cos{x}, \sin{x}, 0,\ldots , 0)^T.
\end{equation*}
In the following, without loss of generality, we assume that $\gamma= \varphi$, and that $\omega= (2\pi- \delta, 2\pi)$. We denote by $N$ the integer such that $\deg (v_0, \mathbb{T}^1, \mathcal{C})= N$.\\

(i).  We first prove Theorem \ref{lem:excconhm0}. Recall that this result  is  related to \hyperref[Step6]{Step 6} of Section \ref{sec:strategy}. We express  the controlled harmonic map heat flow in term of  the polar coordinates as follows:
\begin{gather*}
    u(t, x)= (\cos{\theta}, \sin{\theta}, 0,\ldots , 0)^{T} \; \textrm{ with } \theta= \theta(t, x)\in \mathbb{R},\\
    f^{u^{\perp}}(t, x)= h(t, x) (-\sin{\theta}, \cos{\theta},0,\ldots , 0)^T \;  \textrm{ with $h(t, x)\in \mathbb{R}$ and supp } [0, T]\times  \omega.
\end{gather*}
Thus the function $\theta(t, x)|_{t\in [0, T], x\in [0, 2\pi]}$ satisfies
\begin{equation*}
\begin{cases}
        \theta_t(t, x)-  \theta_{xx}(t, x)= h(t, x), \\
        \theta(t, 2\pi)= \theta(t, 0) \textrm{ mod } 2\pi,\\
          \theta_x(t, 2\pi)= \theta_x(t, 0),\\
        \theta(0, x)= \theta_0(x)
        \end{cases}
\end{equation*}
with
\begin{gather*}
    h(t, x)\in L^{\infty}(0, T; L^2([0, 2\pi]; \mathbb{R})).
\end{gather*}
Clearly, for initial state $\theta_0$ satisfying $\theta_0(2\pi)= \theta_0(0)+ 2\pi N$, the unique solution $\theta$ satisfies
\begin{gather*}
        \theta\in C([0, T]; H^1([0, 2\pi]; \mathbb{R}))\cap L^2([0, T]; H^2([0, 2\pi]; \mathbb{R})), \\
        \theta(t, 2\pi)= \theta(t, 0)+ 2\pi N\; \forall t\in [0, T].
\end{gather*}
By considering
\begin{equation*}
    w(t, x):= \theta(t, x)-  Nx- r,
\end{equation*}
we obtain
\begin{equation*}
\begin{cases}
       w_t(t, x)-  w_{xx}(t, x)= h(t, x), \\
        w(t, 0)= w(t, 2\pi), \\
        w_x(t, 0)= w_x(t, 2\pi),
        \end{cases}
\end{equation*}
  which is a controlled heat equation on $\mathbb{T}^1$. Similarly to the proof of Proposition \ref{prop:nullcontrol}, we can construct an explicit control $h\in L^{\infty}(0, T; L^2)$ such that $w(T, x)= 0$. Actually, we can directly cite the results given in  \cite{1971-Fattorini-Russell-ARMA,  1995-Guo-Littman, 1977-Jones-Frank-JMAA,  1978-Littman-ASNSP,  2014-Martin-Rosier-Rouchon-A,  2016-Coron-Nguyen-ARMA, 2016-Martin-Rosier-Rouchon-SICON, MR4153111} concerning the one-dimensional heat equations. Therefore, the final state of the forced harmonic map heat flow equation is
  \begin{equation*}
      u(T, x)= (\cos \theta(T, x), \sin \theta(T, x),0,\ldots , 0)^T= (\cos (Nx+r), \sin (Nx+r),0,\ldots , 0)^T.
  \end{equation*}
This finishes the proof of Theorem \ref{lem:excconhm0}.
\\

(ii).  Next we turn to the proof of  Theorem \ref{lem:excconhm} keeping in mind  that it is  related to \hyperref[Step7]{Step 7} of Section \ref{sec:strategy}.
Again, we express the controlled harmonic map heat flow equation in terms of the polar coordinates. Without loss of generality, let $r=0$.
Assume that the initial state is $v_0(x)= (\cos (\theta_0(x)), \sin (\theta_0(x)), 0,\ldots , 0)^T$ with
\begin{equation*}
\theta_0\in H^1(0, 2\pi) \textrm{ and } \theta_0(2\pi)= \theta_0(0)+ 2\pi N.
\end{equation*}
We define a $C^2([0, 2\pi])$ function $\theta_1(\cdot)$ as follows:
\begin{equation*}
    \theta_1(x)=
    \begin{cases}
        N_1 x, \textrm{ if } x\in [0, 2\pi- \delta], \\
        N_1 x- (N_1- N) 2\pi, \textrm{ if } x\in [2\pi- \delta/2, 2\pi].
    \end{cases}
\end{equation*}
This function satisfies $(\theta_1)_x(0)= (\theta_1)_x(2\pi)$.
Then we show that there exists a control $h\in L^{\infty}(0, T; L^2(0, 2\pi))$ supported in  $[0, T]\times \omega$ such that the unique solution of
\begin{equation*}
\begin{cases}
        \theta_t(t, x)-  \theta_{xx}(t, x)= h(t, x), \\
        \theta(t, 2\pi)= \theta(t, 0)+ 2\pi N,\\
          \theta_x(t, 2\pi)= \theta_x(t, 0),\\
        \theta(0, x)= \theta_0(x),
        \end{cases}
\end{equation*}
satisfies $\theta(T, \cdot)= \theta_1(\cdot)$. Indeed, since the function $\theta_1$ satisfies
\begin{equation*}
\begin{cases}
       -  \theta_{1xx}(x)= h_1(x), \\
        \theta_1(2\pi)= \theta_1(0)+ 2\pi N,\\
          \theta_{1x}(2\pi)= \theta_{1x}(0)= N_1,
          \end{cases}
\end{equation*}
with $h_1$ supported in  $(2\pi- \delta, 2\pi- \delta/2)$,
it suffices to control the function $w(t, x):= \theta(t, x)- \theta_1(x)$ that is governed by the controlled equation on $\mathbb{T}^1$:
\begin{equation*}
\begin{cases}
        w_t(t, x)-  w_{xx}(t, x)= (h- h_1)(t, x), \\
        w(t, 2\pi)= w(t, 0),\\
          w_x(t, 2\pi)= w_x(t, 0),\\
        w(0, x)= \theta_0(x)- \theta_1(x).
        \end{cases}
\end{equation*}
Since the preceding equation is null controllable, there exists an explicit function $h$ supported in $(0, T)\times \omega$  such that the unique solution satisfies $w(T, x)= 0$. Hence, $\theta(T, x)= \theta_1(x)$.  Define
\begin{gather*}
    u_1(x):= (\cos(\theta_1(x)), \sin(\theta_1(x)), 0,\ldots , 0)^T\; \forall x\in \mathbb{T}^1,\\
    u_2(x):= (\cos(N_1 x), \sin(N_1 x), 0,\ldots , 0)^T\; \forall x\in \mathbb{T}^1.
\end{gather*}

Next, using the assumption $k\geq 2$, we can construct a continuous deformation $u(t, x): [T, 2T]\times \mathbb{T}^1\rightarrow \mathbb{S}^k$  such that
\begin{gather*}
    u(t, x)|_{t\in [T, 2T], x\in \mathbb{T}^1}\in C^1([T, 2T]; C^2(\mathbb{T}^1; \mathbb{S}^k)), \\
    u(T, x)= u_1(x), \; u(2T, x)= u_2(x), \\
    u(t, x)= (\cos(N_1 x), \sin(N_1 x), 0,\ldots , 0)^T\;  \forall t\in [T, 2T], \; \forall x\in [0, 2\pi- \delta].
\end{gather*}
The above constructed function $u(t, x)|_{t\in [T, 2T], x\in \mathbb{T}^1}$ satisfies
\begin{equation*}
    u_t- u_{xx}= |u_x|^2 u+ f,
\end{equation*}
with
\begin{equation*}
    f:=  u_t- u_{xx}- |u_x|^2 u \textrm{ having a support included in  } [T, 2T]\times \omega.
\end{equation*}
Therefore, we have constructed a control $f$ supported in $[0, 2T]\times \omega$ and  a function $u(t, x)|_{t\in [0, 2T], x\in \mathbb{T}^1}$ as a solution to the controlled harmonic map heat flow equation such that
\begin{equation*}
    u(0, \cdot)= v_0(\cdot),  \; u(T, \cdot)= u_1(\cdot)
 \textrm{ and } u(2T, \cdot)= u_2(\cdot).
\end{equation*}
We refer to Figure \ref{pic6} concerning this deformation process.  This finishes the proof of Theorem \ref{lem:excconhm}.

\subsection{The general compact Riemannian manifold target case}\label{sebsec:genR}
Here we turn to  cases where the $\mathbb{S}^k$-target is replaced by   a  general  compact  Riemannian submanifold of $\R^m$: Theorem \ref{th-small-time-global-harmonic}.
In fact, by the Nash embedding theorem, it then covers the case where the target manifold is any compact Riemannian manifold.
The main ingredient of our proof of  Theorem \ref{th-small-time-global-harmonic} is the following lemma.

\begin{lem}
\label{heat-geodesic}
Let $\Gamma$ be a complete geodesic of  $\mathcal{N}$ and let $q_0, q_1$ be two points in this geodesic.  Let  $0<a<a_1<a_0< \pi$ and the controlled  domains be $\omega= [0, a_0]\cap [2\pi- a_0, 2\pi]$,  $\omega_0= [a, a_0]\cap [2\pi- a_0, 2\pi -a]$ and  $\omega_1= [a, a_1]\cap [2\pi- a_1, 2\pi- a]$.  Let $T>0$.

$(i)$ We consider the harmonic map heat flow $[0, T]\times \mathbb{T}^1\rightarrow \mathcal{N}$:
\begin{equation}\label{eq:conhmf:whole}
\begin{cases}
 u_t- u_{xx}= \beta_u (u_x, u_x)+ \mathbf{1}_{\omega}f^{u^{\perp}},\\
 u(0, \cdot)= u_0(\cdot)\subset \Gamma
 \end{cases}
\end{equation}
Assume that the initial state has values inside the geodesic $\Gamma$ and also that the control is always tangent to this geodesic. Then the controlled harmonic map heat flow has values in $\Gamma$ and becomes a linear controlled heat equation.

$(ii)$  We consider the harmonic map heat flow $[0, T]\times (a, 2\pi- a)\rightarrow \mathcal{N}$:
\begin{equation}\label{eq:Inter:conhmf}
\begin{cases}
 u_t- u_{xx}= \beta_u (u_x, u_x)+ \mathbf{1}_{\omega_0}f^{u^{\perp}},\\
 u(t, a)= q_0, \\
 u(t, 2\pi- a)= q_1,\\
 u(0, \cdot)= u_0(\cdot)\subset \Gamma,
 \end{cases}
\end{equation}
where the initial state satisfies the compatibility condition $u_0(a)= q_0$ and $u_0(2\pi- a)= q_1$.
Assume that the control is always tangent to the geodesic $\Gamma$. Then the controlled harmonic map heat flow has values in $\Gamma$ and becomes a linear controlled heat equation.\\
Moreover, for any point $q_2\in \Gamma$ and any initial state $u_0\in C^2([a, 2\pi- a])$, there exists a control $f\in C^0([0, T]\times [a, 2\pi- a])$ such that the unique strong solution satisfies $u\in C^0([0, T]; C^2[a, 2\pi- a])\cap C^1([0, T]; C^0[a, 2\pi- a])$
and
\begin{equation}
u(T, x)= q_2\; \forall x\in [a_1, 2\pi- a_1].
\end{equation}
\end{lem}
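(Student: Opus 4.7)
The strategy is to reduce the geometric equation to a scalar linear heat equation via arc-length parametrization of $\Gamma$, after which parts $(i)$ and $(ii)$ together with the controllability claim all follow from classical results. Let $\gamma:\R\to\mathcal{N}\subset\R^m$ be an arc-length parametrization of the complete geodesic $\Gamma$. Since $\gamma$ is a geodesic of $\mathcal{N}$, one has $\nabla^{\mathcal{N}}_{\gamma'}\gamma'=0$, which means that the Euclidean second derivative $\gamma''(\theta)$ lies in the normal space $(T_{\gamma(\theta)}\mathcal{N})^\perp$, and the Gauss formula gives $\beta_{\gamma(\theta)}(\gamma'(\theta),\gamma'(\theta))=-\gamma''(\theta)$.

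Make the ansatz $u(t,x)=\gamma(\theta(t,x))$ and choose the control tangent to $\Gamma$ in the form $f^{u^\perp}(t,x)=h(t,x)\gamma'(\theta(t,x))$. Using $u_x=\gamma'(\theta)\theta_x$ and $u_{xx}=\gamma'(\theta)\theta_{xx}+\gamma''(\theta)\theta_x^2$, the normal projection of \eqref{eq:conhmf:whole} (respectively \eqref{eq:Inter:conhmf}) is automatically satisfied, while the tangential projection, after dividing by the unit tangent $\gamma'(\theta)$, reduces to the scalar linear heat equation $\theta_t-\theta_{xx}=\mathbf{1}_{\omega}h$ (respectively with control region $\omega_0$). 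Conversely, starting from a lift $\theta_0$ of $u_0$ to $\R$ (which exists since $u_0$ is continuous with values in $\Gamma$ and $\gamma$ is a smooth immersion), solving this scalar heat equation with the appropriate Cauchy, respectively Cauchy--Dirichlet, data and setting $u:=\gamma\circ\theta$ produces a solution of the geometric equation with values in $\Gamma$. Uniqueness of the Cauchy problem for the controlled harmonic map heat flow (a routine adaptation of Lemma \ref{lem:forcehmhf} to general targets $\mathcal{N}$) identifies this as the unique solution, proving $(i)$ and the first assertion of $(ii)$.

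For the controllability claim in $(ii)$, write $q_i=\gamma(\alpha_i)$ for $i=0,1,2$; it suffices to steer the scalar heat equation with Dirichlet data $\theta(t,a)=\alpha_0$, $\theta(t,2\pi-a)=\alpha_1$ and interior control in $\omega_0$, from the lift $\theta_0$ of $u_0$ to a profile that equals $\alpha_2$ on $[a_1,2\pi-a_1]$. The key construction is a smooth profile $\psi\in C^\infty([a,2\pi-a])$ with $\psi(a)=\alpha_0$, $\psi(2\pi-a)=\alpha_1$, $\psi\equiv\alpha_2$ on $[a_1,2\pi-a_1]$, whose transitions (and in particular $\psi_{xx}$) are supported in $[a,a_1]\cup[2\pi-a_1,2\pi-a]\subset\omega_0$; this is possible precisely because $a_1<a_0$. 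The shifted unknown $\eta:=\theta-\psi$ then satisfies a homogeneous Dirichlet heat equation with source $\mathbf{1}_{\omega_0}h+\psi_{xx}$, and since $\psi_{xx}$ is itself supported in $\omega_0$ it can be absorbed into a redefined control $\tilde h$. Classical interior null controllability of the $1$D linear heat equation on $[a,2\pi-a]$, in the smooth (Gevrey/flatness) version of \cite{2014-Martin-Rosier-Rouchon-A,2016-Martin-Rosier-Rouchon-SICON}, then produces a $C^\infty$ control driving $\eta$ to $0$, whence $\theta(T,\cdot)\equiv\psi$ and $u(T,x)=q_2$ for every $x\in[a_1,2\pi-a_1]$. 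The main obstacle, the only point that is not purely classical, is to secure the claimed joint regularity $u\in C^0([0,T];C^2[a,2\pi-a])\cap C^1([0,T];C^0[a,2\pi-a])$ together with a $C^0$ control; this follows from the smoothness of the flatness-type control, standard parabolic regularity, and the boundary compatibility $\theta_0(a)=\alpha_0=\psi(a)$, $\theta_0(2\pi-a)=\alpha_1=\psi(2\pi-a)$ that is built into the construction.
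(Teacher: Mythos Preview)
Your proposal is correct and follows essentially the same approach as the paper: both parametrize $\Gamma$ by a constant-speed geodesic $\gamma$ (equivalently, a non-constant harmonic map $\bar u:\R\to\mathcal{N}$), write $u=\gamma\circ\theta$ with control tangent to $\Gamma$, and observe that the geodesic equation cancels the $\gamma''\theta_x^2$ term against $\beta_u(u_x,u_x)$, reducing the problem to the scalar heat equation $\theta_t-\theta_{xx}=\mathbf{1}_{\omega}h$; for the controllability in $(ii)$, both subtract a static profile equal to the target on $[a_1,2\pi-a_1]$ with second derivative supported in $\omega_1\subset\omega_0$, and then invoke null controllability of the resulting homogeneous Dirichlet problem. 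The only minor difference is that you are more explicit than the paper about obtaining a continuous control and the claimed $C^0_tC^2_x\cap C^1_tC^0_x$ regularity via the flatness construction of \cite{2014-Martin-Rosier-Rouchon-A,2016-Martin-Rosier-Rouchon-SICON}, whereas the paper simply asserts ``null controllable with $\mathbf{1}_{\omega_0}f_0\in C^0$''.
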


The second part of Lemma \ref{heat-geodesic} can be combined with the following gluing lemma to construct $2\pi$-periodic strong solutions of the controlled harmonic map heat flow.
\begin{lem}\label{lem:glue}
Let $\gamma$ be a closed curve on $\mathcal{N}$. Assume that the map  $u: [0, T]\times (a, 2\pi- a)\rightarrow \mathcal{N}$ belongs to $ C^0([0, T]; C^2[a, 2\pi- a])\cap C^1([0, T]; C^0[a, 2\pi- a])$ space. Then we can extend $u$ to $\tilde u: [0, T]\times [0, 2\pi] \rightarrow \mathcal{N}$ so that the extension $\tilde u$ satisfies
\begin{gather}
\tilde u\in C^0([0, T]; C^2[0, 2\pi])\cap C^1([0, T]; C^0[0, 2\pi]), \\
\tilde u(t, 0)= \tilde u(t, 2\pi), \forall t\in [0, T],\\
\tilde u_x(t, 0)= \tilde u_x(t, 2\pi), \forall t\in [0, T],\\
\tilde u_{xx}(t, 0)= \tilde u_{xx}(t, 2\pi), \forall t\in [0, T], \\
\tilde u(t, \cdot) \textrm{ is homotopic to  } \gamma, \forall t\in [0, T].
\end{gather}
\end{lem}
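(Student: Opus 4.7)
I identify $\mathbb{T}^1 = \R/2\pi\Z$ and view the missing arc $[0,a] \cup [2\pi-a, 2\pi]$ as a single interval $A$ of length $2a$, parametrized by $y \in [-a, a]$ (with $y = x$ for $x \in [0,a]$ and $y = x - 2\pi$ for $x \in [2\pi-a, 2\pi]$). Under this identification, the matching conditions $\tilde u(t,0) = \tilde u(t, 2\pi)$ and the analogous ones for the first two derivatives become automatic as soon as $\tilde u$ is $C^2$ in $y$ across $A$. Hence the task reduces to constructing a map $\tilde u: [0,T] \times [-a, a] \to \mathcal{N}$ of regularity $C^0([0,T]; C^2_y) \cap C^1([0,T]; C^0_y)$ that matches the prescribed $C^2$ Taylor data of $u$ at $y = \pm a$ and such that the resulting closed loop $\tilde u(t, \cdot) : \mathbb{T}^1 \to \mathcal{N}$ lies in the free homotopy class of $\gamma$.

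\textbf{Construction by spine plus tubular Hermite interpolation.}
Let $\mathcal{N}_\eta \subset \R^m$ be a tubular neighborhood of $\mathcal{N}$ with smooth nearest-point retraction $\pi_\mathcal{N} : \mathcal{N}_\eta \to \mathcal{N}$. On the inner subinterval $[-a + \delta, a - \delta]$ I pick a ``spine'' $\sigma(t, \cdot) \in C^2_y$ with values in $\mathcal{N}$, depending $C^1$ on $t$, joining two freely selected endpoints $p_\pm(t) \in \mathcal{N}$. On each transition band, say $[a-\delta, a]$, I set
\begin{equation*}
\tilde u(t, y) := \pi_\mathcal{N}\bigl( H(t, y) \bigr),
\end{equation*}
where $H(t, \cdot)$ is the $\R^m$-valued quintic Hermite polynomial determined by the six vector conditions $(H, H_y, H_{yy})|_{y=a-\delta} = (\sigma, \sigma_y, \sigma_{yy})|_{y=a-\delta}$ and $(H, H_y, H_{yy})|_{y=a} = (u, u_y, u_{yy})|_{y=a}$. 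For $\delta$ small enough, $H$ remains inside $\mathcal{N}_\eta$, so $\tilde u$ is well defined; the mirrored construction handles $[-a, -a+\delta]$. Finally I set $\tilde u := \sigma$ on $[-a+\delta, a-\delta]$ and $\tilde u := u$ on $(a, 2\pi - a)$.

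\textbf{Preservation of the $C^2$ data after projection.}
The subtle analytic point is that, although $H$ does not take values in $\mathcal{N}$, its projection $\pi_\mathcal{N} \circ H$ nevertheless matches the prescribed derivatives at each junction. For $p \in \mathcal{N}$ and $v \in T_p\mathcal{N}$, one has $D\pi_\mathcal{N}|_p = P_{T_p\mathcal{N}}$ and $D^2\pi_\mathcal{N}|_p(v,v) = \beta_p(v,v)$, the second fundamental form of $\mathcal{N}$ in $\R^m$ (this is obtained by differentiating $\pi_\mathcal{N}(p+tv) - (p+tv) \in (T_p\mathcal{N})^\perp$ twice at $t=0$). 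Applied at $y_0 \in \{-a+\delta, a-\delta, \pm a\}$ with $c$ equal to the relevant boundary trace ($u$ or $\sigma$), the chain rule yields
\begin{equation*}
(\pi_\mathcal{N} \circ H)''(y_0) = \beta_p(v,v) + P_{T_p\mathcal{N}} c''(y_0) = c''(y_0)^\perp + c''(y_0)^\top = c''(y_0),
\end{equation*}
because $c$ lying in $\mathcal{N}$ forces $c''(y_0)^\perp = \beta_p(v,v)$ by the Gauss formula. The first-derivative matching is immediate from $D\pi_\mathcal{N}|_p$ being the identity on $T_p\mathcal{N}$, and the value matches since $c(y_0) \in \mathcal{N}$. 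The required $C^0_t(C^2_y)$ and $C^1_t(C^0_y)$ regularities of $\tilde u$ are inherited from those of $u$, $\sigma$ and from the smoothness of $\pi_\mathcal{N}$.

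\textbf{Selecting the homotopy class, and the main obstacle.}
The free homotopy class of $\tilde u(t, \cdot) : \mathbb{T}^1 \to \mathcal{N}$ varies continuously in $t$ and hence is locally constant; it therefore suffices to arrange $\tilde u(0, \cdot) \sim \gamma$ by choosing the spine $\sigma(0, \cdot)$ appropriately. Since $\mathcal{N}$ is path-connected, every conjugacy class in $\pi_1(\mathcal{N})$---equivalently, every free homotopy class of loops in $\mathcal{N}$---is realized by some smooth path from $p_-(0)$ to $p_+(0)$, so $\sigma(0, \cdot)$ can be chosen to place the full loop in $\gamma$'s class, and then propagated $C^1$-continuously in $t$. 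The principal obstacle I anticipate is precisely the $C^2$-compatibility identity displayed in the previous paragraph: the cancellation between the $D^2\pi_\mathcal{N}$ contribution and the mandatory normal component of $c''$ for curves on $\mathcal{N}$ is what allows ambient Hermite interpolation to descend cleanly to $\mathcal{N}$ matching second derivatives. Without this cancellation the whole cutoff-and-glue scheme would fail; with it, the remainder of the proof is routine.
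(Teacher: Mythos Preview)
The paper states this lemma without proof, so there is no approach to compare against; I evaluate your argument on its own.

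Your geometric core is correct: the key identity $(\pi_{\mathcal N}\circ H)''(y_0)=c''(y_0)$ at a junction where $H$ matches the $C^2$ jet of an $\mathcal N$-valued curve $c$ is exactly the cancellation between $D^2\pi_{\mathcal N}|_p(v,v)=(c'')^{\perp}$ and $D\pi_{\mathcal N}|_p(c'')=(c'')^{\top}$ forced by the Gauss formula, and it is what allows ambient interpolation followed by projection to land back on $\mathcal N$ while preserving second-order contact. The spine argument for hitting the prescribed free homotopy class is also sound.

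There is, however, a real gap in the time-regularity claim. Your quintic Hermite interpolant $H(t,\cdot)$ depends \emph{linearly and pointwise} on the six boundary data, among them $u_y(t,a)$ and $u_{yy}(t,a)$; under the hypothesis $u\in C^0([0,T];C^2)\cap C^1([0,T];C^0)$ these two traces are merely continuous in $t$, not $C^1$. Consequently $t\mapsto H(t,\cdot)\in C^0_y$ is only continuous, and the extension $\tilde u=\pi_{\mathcal N}\circ H$ need not lie in $C^1([0,T];C^0)$; your sentence ``the required regularities are inherited'' is false for the $C^1_tC^0_y$ part. The fix is to replace the pointwise Hermite interpolation by a Seeley-type reflection extension $E$, which is a bounded linear operator simultaneously on $C^0_y$ and on $C^2_y$ and therefore carries $C^0_tC^2_y\cap C^1_tC^0_y$ into itself. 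Near $y=\pm a$ the Seeley extension $Eu$ stays in the tube $\mathcal N_\eta$; blend it with the spine $\sigma$ via a smooth spatial cutoff $\chi(y)$ (choosing $\sigma$ constant equal to $u(t,a)$, respectively $u(t,2\pi-a)$, on the support of $\chi$ so that the convex combination remains in $\mathcal N_\eta$), and then project. Your Gauss-formula computation still yields the $C^2$ matching at $y=\pm a$, since $\chi\equiv 1$ there and $Eu$ agrees with $u$ to second order, while the cutoff handles the interior transition without ever sampling $u_y,u_{yy}$ at an isolated point.
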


\begin{proof}[Proof of Lemma~\ref{heat-geodesic}]
The statement concerning  the values of the flow follows by checking that the solution to the controlled harmonic map heat flow with values in the Riemannian submanifold $\Gamma\subset \R^m$ is also a solution to the controlled harmonic map heat flow with values in the Riemannian submanifold $\mathcal{N}\subset\R^m$. Concerning the second statement, suppose that
\begin{align*}
\bar u: \mathbb{R}&\rightarrow \Gamma \subset \mathcal{N} \\
s& \mapsto \bar u(s)
\end{align*}
 is a non-constant harmonic map:
\begin{equation}
    -\bar u_{ss}= \beta_{\bar u} (\bar u_s, \bar u_s) \; \forall s\in \R,
\end{equation}
where $\beta$ is the second fundamental form of $\mathcal{N}$. Note that $\bar u (\R)=\Gamma$.
We also note that for any two points $p$ and $p_f$ in the same connected component of $\mathcal{N}$, there exists a complete geodesic $\Gamma$ containing both points.

$(i)$ We know that $u$ is a solution to the controlled harmonic map heat flow if and only if
\begin{equation}
    u_t- u_{xx}= \beta_u (u_x, u_x)+  \mathbf{1}_{\omega_0} f^{u^{\perp}}.
\end{equation}
Assume that $u(t, x)= \bar u(\varphi(t, x))$ for every $(t, x)\in [0, T]\times \mathbb{T}^1$, with
\begin{equation*}
    \varphi: [0, T]\times \mathbb{T}^1\rightarrow \mathbb{R},
\end{equation*}
and that  $f=  f_0 \bar u_s(\varphi)\in T_{\bar u(\varphi)}\Gamma\subset  T_{\bar u(\varphi)}\mathcal{N}$, where $f_0:[0,T]\times \mathbb{T}^1\rightarrow \R$.
Let
\begin{equation}
    A:= u_t- u_{xx}- \beta_u (u_x, u_x)- \mathbf{1}_{\omega}f^{u^{\perp}}.
\end{equation}
It satisfies
\begin{align*}
    A&= \bar u_s(\varphi) \varphi_t- \bar u_{ss}(\varphi) |\varphi_x|^2- \bar u_s(\varphi) \varphi_{xx}- \beta_{\bar u} (\bar u_s(\varphi), \bar u_s(\varphi)) |\varphi_x|^2- \mathbf{1}_{\omega}f_0 \bar u_s(\varphi)\\
    &= \bar u_s(\varphi) \left(\varphi_t- \varphi_{xx}- \mathbf{1}_{\omega}f_0\right)- \Big(\bar u_{ss}(\varphi)+ \beta_{\bar u} (\bar u_s(\varphi), \bar u_s(\varphi))\Big) |\varphi_x|^2\\
    &= \bar u_s(\varphi) \left(\varphi_t- \varphi_{xx}- \mathbf{1}_{\omega}f_0\right).
\end{align*}
Note that, since $\bar u$ is a non-constant harmonic map, $\bar u_s(\varphi)$ does not vanish. Hence $u$ is a solution to the controlled harmonic map heat flow  if and only if $\varphi$ is a solution to the controlled heat equation
\begin{equation}\label{eq:heat-general}
    \varphi_t- \varphi_{xx}= \mathbf{1}_{\omega}f_0.
\end{equation}

$(ii)$ Assume that
\begin{equation}
q_0= \bar u(A), q_1= \bar u(B) \textrm{ and } q_2= \bar u(D).
\end{equation}
 Similarly,  assume that $u(t, x)= \bar u(\varphi(t, x))\;  \forall (t, x)\in [0, T]\times [a, 2\pi- a]$, with
\begin{equation*}
    \varphi: [0, T]\times [a, 2\pi- a]\rightarrow \mathbb{R},
\end{equation*}
and that  $f=  f_0 \bar u_s(\varphi)\in T_{u(\varphi)}\Gamma\subset  T_{u(\varphi)}\mathcal{N}$, where $f_0:[0,T]\times [a, 2\pi- a]\rightarrow \R$.
Then
\begin{align*}
    u_t- u_{xx}- \beta_u (u_x, u_x)- \mathbf{1}_{\omega}f^{u^{\perp}}= \bar u_s(\varphi) \left(\varphi_t- \varphi_{xx}- \mathbf{1}_{\omega_0}f_0\right).
\end{align*}
 Hence $u$ is a solution to the controlled harmonic map heat flow  if and only if $\varphi$ is a solution to the controlled heat equation
\begin{equation}\label{eq:heat-general-interval}
\begin{cases}
 \varphi_t- \varphi_{xx}= \mathbf{1}_{\omega_0}f_0, \\
 \varphi(t, a)= A, \\
 \varphi(t, 2\pi- a)= B, \\
 \varphi(0, \cdot)\in C^2([a, 2\pi- a]; \mathbb{R}).
\end{cases}
\end{equation}

Next, we show that there exists a continuous control $f$ such that the solution satisfies $u(T, x)= q_3$ for every $x\in [a_1, 2\pi- a_1]$. It suffices to show that there exists $f_0$ such that  the solution of \eqref{eq:heat-general-interval} satisfies $\varphi(T, x)= D\; \forall x\in [a_1, 2\pi- a_1]$.

We construct a steady state $\bar \varphi\in C^2([a, 2\pi- a]; \mathbb{R})$ such that, for some function $g_0\in C^2([a, 2\pi- a]; \mathbb{R})$ with a support contained in $\omega_1$,
\begin{equation}
\begin{cases}
- \bar \varphi_{xx}= \textbf{1}_{\omega_1} g_0,\\
\bar \varphi(a)= A,\\
 \bar \varphi(2\pi- a)= B, \\
 \bar \varphi(x)= D\; \forall x\in [a_1, 2\pi- a_1],
\end{cases}
\end{equation}
and consider the state $\phi:= \varphi- \bar \varphi$ which satisfies
\begin{equation}\label{eq:heat-general-interval_phi}
\begin{cases}
 \phi_t- \phi_{xx}= \mathbf{1}_{\omega_0}f_0- \textbf{1}_{\omega_1} g_0, \\
 \phi(t, a)= 0, \\
 \phi(t, 2\pi- a)= 0, \\
 \phi(0, \cdot)= \varphi(0, \cdot)- \bar \varphi(0, \cdot)\in C^2([a, 2\pi- a]; \mathbb{R}).
\end{cases}
\end{equation}
The preceding system on $\phi$ is null controllable with $\mathbf{1}_{\omega_0}f_0\in C^0$. Thus we have constructed a solution $\varphi$ that satisfies  $\varphi(T, x)= D\; \forall x\in [a_1, 2\pi- a_1]$.
This concludes the proof of Lemma~\ref{heat-geodesic}.

\end{proof}

Thanks to the previous lemmas, one can adapt the ideas of Theorems \ref{lem:excconhm0}--\ref{lem:excconhm} to get the small-time global controllability between homotopic harmonic maps, i.e.  Theorem~\ref{th-small-time-global-harmonic}. More precisely, the proof consists of the following five stages which are illustrated in Figure \ref{picma}.
\begin{proof}[The proof of Theorem~\ref{th-small-time-global-harmonic}]
 We select $p_0\in \gamma_0$, $p_1\in \gamma_1$ and further find a constant speed complete geodesic $\Gamma= \{\bar u(s): s\in \mathbb{R}\}$ such that $\bar u(a)= p_0, \bar u(2\pi- a)= p_1$.

\noindent \textit{ Stage 1.} In this first stage we use the idea of Theorems \ref{lem:excconhm0}, or Lemma \ref{heat-geodesic} for the special case that $\Gamma$ is a closed geodesic,  to deform the state on the geodesic $\gamma_0$. It suffices to select  controls that are tangent to this geodesic. Since the initial state is the harmonic map $\gamma_0$, which is smooth,  we can find a control $f\in C^0([0, T/5]\times \mathbb{T}^1)$  to steer the controlled harmonic map heat flow from $\gamma_0(\cdot)$ to  $v_1\in C^2(\mathbb{T}^1; \mathcal{N})$ satisfying
\begin{gather*}
v_1(x)\in \{\gamma(s): s\in \mathbb{T}^1\} \; \forall x\in \mathbb{T}^1, \\
v_1(x)= p_0\; \forall x\in [a, 2\pi- a_1].
\end{gather*}

\noindent \textit{ Stage 2.}    Using the idea of Theorem \ref{lem:excconhm} we can find a control $f\in C^2([T/5, 2T/5]\times \mathbb{T}^1)$ to move the state from $v_1$ to any state  $v_2\in C^2(\mathbb{T}^1; \mathcal{N})$ satisfying
\begin{gather*}
v_2(x)\in \Gamma \; \forall x\in [a, 2\pi- a], \\
v_2(x)= p_0\; \forall x\in [a, 2\pi- a_1], \\
v_2(2\pi- a)= p_1, \\
v_2 \textrm{ is homotopic to } v_1.
\end{gather*}
However, currently we only fix the value of $v_2$ on $[a, 2\pi- a]$, while the explicit choice of $v_2$ on $\mathbb{T}^1\setminus [a, 2\pi- a]$ will be fixed later on in the next step.

\noindent \textit{ Stage 3.} Notice that $v_2|_{x\in [a, 2\pi- a]}$ has values in the complete geodesic $\Gamma$. We adapt the idea of Lemma \ref{heat-geodesic} $(ii)$ to  the harmonic map heat flow $[2T/5, 3T/5]\times (a, 2\pi- a)\rightarrow \mathcal{N}$:
\begin{equation}
\begin{cases}
 w_t- w_{xx}= \beta_w (w_x, w_x)+ \mathbf{1}_{\omega_0}g^{w^{\perp}},\\
 w(t, a)= p_0, \\
 w(t, 2\pi- a)= p_1,\\
 w(2T/5, \cdot)|_{(a, 2\pi- a)}= v_2(\cdot)|_{(a, 2\pi- a)}\subset \Gamma.
 \end{cases}
\end{equation}
Thus there exists a control $g\in C^0([2T/5, 3T/5]\times \omega_0)$ such that the solution  $w\in C^0([2T/5, 3T/5]; C^2[a, 2\pi- a])\cap C^1([2T/5, 3T/5]; C^0[a, 2\pi- a])$ satisfies
\begin{gather*}
w(3T/5, x)\in \Gamma \; \forall x\in [a, 2\pi- a], \\
w(3T/5, x)= p_1\;  \forall x\in [a_1, 2\pi- a_1], \\
w(3T/5, a)= p_0, \\
w(3T/5, 2\pi- a)= p_1.
\end{gather*}
Thanks to the gluing Lemma \ref{lem:glue}, we can extend the control $g$ to $f$ on $[2T/5, 3T/5]\times \omega$ and the state $w$ to $u$ on $[2T/5, 3T/5]\times \mathbb{T}^1$ such that
\begin{equation}
\begin{cases}
 u_t- u_{xx}= \beta_u (u_x, u_x)+ \mathbf{1}_{\omega}f^{u^{\perp}},\\
 u(t, x)= w(t, x)\; \forall (t, x)\in [2T/5, 3T/5]\times (a, 2\pi- a), \\
u\in C^0([2T/5, 3T/5]; C^2(\mathbb{T}^1))\cap C^1([2T/5, 3T/5]; C^0(\mathbb{T}^1)), \\
 u(t, \cdot) \textrm{ is homotopic to  } \gamma_0, \forall t\in [0, T].
 \end{cases}
\end{equation}
Hence, we will choose the exact value of  $v_2(\cdot)$  as $u(2T/ 5, \cdot)\in C^2(\mathbb{T}^1; \mathcal{N})$. We denote the value of $u(3T/5)$ as $v_3\in C^2(\mathbb{T}^1; \mathcal{N})$ which satisfies
\begin{gather*}
v_3(x)\in \Gamma \; \forall x\in [a, 2\pi- a], \\
v_3(x)= p_1\in \gamma_1\;  \forall x\in [a_1, 2\pi- a_1], \\
v_3(a)= p_0, \\
v_3(2\pi- a)= p_1.
\end{gather*}

\noindent \textit{ Stage 4.}  Similar to Step 2, we use the idea of Theorem \ref{lem:excconhm} to  find a control $f\in C^2([T/5, 2T/5]\times \mathbb{T}^1)$ that steers the state from $v_3$ to  $v_4\in C^2(\mathbb{T}^1; \mathcal{N})$ satisfying
\begin{gather*}
v_4(x)\in \{\gamma_1(s): s\in \mathbb{T}^1\} \; \forall x\in \mathbb{T}^1, \\
v_4(x)= p_1\;  \forall x\in [a_1, 2\pi- a_1].
\end{gather*}

\noindent \textit{ Stage 5.}  Finally, in the last step, similar to Step 1,  using the idea of Theorem \ref{lem:excconhm0} we can find a control that is tangent to the geodesic $\gamma_1$ to deform the state from $v_4$ to $\gamma_1$.
\end{proof}

\vspace{0.5cm}

\noindent\textbf{Acknowledgments} \;
Shengquan Xiang is financially  supported by “The Fundamental Research Funds for the Central Universities, 7100604200, Peking University” and NSF of China under Grant  12301562.

\bibliographystyle{alpha}
\bibliography{biblio}

\end{document}